\newcommand{\la}{\langle}
\newcommand{\ra}{\rangle}
\newcommand{\leqs}{\leqslant}
\newcommand{\Aut}{\operatorname{Aut}}
\newcommand{\PSL}{{\mathrm {PSL}}}
\newcommand{\GL}{{\mathrm {GL}}}
\newcommand{\PSp}{{\mathrm {PSp}}}
\newcommand{\PSU}{{\mathrm {PSU}}}
\newcommand{\PP}{{\mathrm {P}}}
\newcommand{\SL}{{\mathrm {SL}}}
\newcommand{\Sp}{{\mathrm {Sp}}}
\newcommand{\Spin}{{\mathrm {Spin}}}
\newcommand{\SU}{{\mathrm {SU}}}
\newcommand{\PGL}{{\mathrm {PGL}}}
\newcommand{\FF}{\mathbb{F}} 
\newcommand{\LL}{\mathrm {PSL}}
\newcommand{\UU}{\mathrm {PSU}}
\newcommand{\OO}{{\mathrm {O}}}
\newcommand{\PGU}{\mathrm {PGU}}
\newcommand{\Alt}{{\mathbf {A}}}
\newcommand{\Sym}{{\mathbf {S}}}
\newcommand{\SCC}{\mathrm{SCC}} 
\newcommand{\NCC}{\mathrm{NCC}} 
\newcommand{\CCC}{\mathrm{CCC}} 
\newcommand{\imod}[1]{\allowbreak\mkern4mu({\operator@font mod}\,\,#1)}
\renewcommand{\leq}{\leqs}
\newtheorem{theorem}{Theorem} 
\newtheorem*{conj*}{Conjecture}
\newtheorem{corol}{Corollary}
\newtheorem{thm}{Theorem}[section] 
\newtheorem{prop}[thm]{Proposition} 
\newtheorem{lem}[thm]{Lemma}
\newtheorem{cor}[thm]{Corollary}
\theoremstyle{definition}
\newtheorem{rem}[thm]{Remark}
\newtheorem{remk}{Remark}
\newtheorem{problem}{Problem}
\begin{document}

\title[Variations on the Thompson theorem]{Variations on the Thompson theorem}

\author{Hung P. Tong-Viet}
\address{H.P. Tong-Viet, Department of Mathematics and Statistics, Binghamton University, Binghamton, NY 13902-6000, USA}
\email{htongvie@binghamton.edu}

\renewcommand{\shortauthors}{Tong-Viet}

\begin{abstract}
Thompson's theorem stated that a finite group $G$ is solvable if and only if every $2$-generated subgroup of $G$ is solvable. In this paper, we prove some new criteria for both solvability and nilpotency of a finite group using certain condition on $2$-generated subgroups. We show that a finite group $G$ is solvable if and only if for every pair of two elements $x$ and $y$ in $G$ of coprime prime power order, if $\langle x,y\ra$ is solvable, then $\la x,y^g\ra$ is solvable for all $g\in G$. Similarly, a finite group $G$ is nilpotent if and only if for every pair of elements $x$ and $y$ in $G$ of coprime prime power order, if $\la x,y\ra$ is solvable, then $x$ and $y^g$ commute  for some $g\in G.$ Some applications to graphs defined on groups are given.
\end{abstract}

\date{\today}
\keywords{solvable; nilpotent, 2-generated; solvable radical; real elements}
 \subjclass[2020]{Primary
20D10, 20D25, 20E45, 05C25}

\maketitle

\setcounter{tocdepth}{1}
\tableofcontents


\section{Introduction}\label{s:intro}

As a consequence of the classification of finite $N$-groups, Thompson proved in \cite{Thompson}  that  a finite group $G$ is solvable if and only if every two-generated subgroup of $G$ is soluble.  A direct proof  of this theorem was given by P. Flavell in \cite{Flavell}. We obtain a generalization of this as follows.

\begin{theorem}\label{th:solvable}
Let $G$ be a finite group.  Then $G$ is solvable if and only if for every pair of distinct primes $p$ and $q$ and for every pair of elements $x,y\in G$ with $x$ a $p$-element and $y$ a $q$-element, if $\la x,y\ra$ is solvable, then $\la x,y^g\ra$  is solvable for all $g\in G$. 
\end{theorem}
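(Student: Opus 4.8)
The forward direction is immediate: if $G$ is solvable then every subgroup of $G$ is solvable, so the implication in the hypothesis holds vacuously. The substance is the converse, which I would prove by taking a minimal counterexample $G$, a non-solvable group of least order satisfying the stated condition. My first move is to show the hypothesis passes to quotients by solvable normal subgroups. Let $N \trianglelefteq G$ be solvable, write $\bar{G}=G/N$, and take a $p$-element $\bar x$ and a $q$-element $\bar y$ of $\bar G$ with $\la \bar x,\bar y\ra$ solvable. Lift them to prime-power elements of $G$: for any preimage $w$ of $\bar x$, the $p$-part $w_p$ maps to the $p$-part of $\bar x$, which is $\bar x$ itself, so $w_p$ is a $p$-element lifting $\bar x$; similarly pick a $q$-element $y$ lifting $\bar y$. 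Since $\la w_p,y\ra N/N \cong \la \bar x,\bar y\ra$ is solvable and $N$ is solvable, the group $\la w_p,y\ra N$ is solvable, hence so is $\la w_p,y\ra$. The hypothesis then gives that $\la w_p, y^g\ra$ is solvable for every $g\in G$, and projecting to $\bar G$ shows $\la \bar x,\bar y^{\bar g}\ra$ is solvable for every $\bar g$. Thus $\bar G$ satisfies the same condition. Applying this with $N=R(G)$ the solvable radical, $G/R(G)$ satisfies the hypothesis and is non-solvable, so minimality forces $R(G)=1$.

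With $R(G)=1$ the socle $\soc(G)=S_1\times\cdots\times S_k$ is a direct product of nonabelian simple groups, so $G$ contains an honest nonabelian simple subgroup $S:=S_1$; this is exactly why the radical reduction is essential, since a non-solvable group such as $\SL_2(5)$ need not contain any nonabelian simple subgroup, only a simple section. It now suffices to establish the following statement about simple groups: for every nonabelian finite simple group $S$ there are primes $p\ne q$, a $p$-element $x\in S$ and a $q$-element $y\in S$ such that $\la x,y\ra$ is solvable while $\la x,y^g\ra = S$ for some $g\in S$. Granting this, choose such $x,y\in S\le G$; since $\la x,y\ra$ is solvable the hypothesis yields that $\la x,y^g\ra$ is solvable for all $g\in G$, in particular for the element $g\in S$ with $\la x,y^g\ra=S$, contradicting the non-solvability of $S$. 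This contradiction completes the proof.

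I expect the displayed statement about simple groups to be the main obstacle, and its proof to depend on the classification of finite simple groups. The natural strategy is to take $x$ and $y$ inside a common proper solvable subgroup — for instance commuting elements, giving $\la x,y\ra$ abelian, or elements generating a dihedral or Frobenius group — drawn from conjugacy classes with large enough spread that a single conjugate already generates $S$; here I would invoke the known $\tfrac32$-generation and spread results for simple groups (Guralnick--Kantor, Breuer--Guralnick--Kantor) together with their refinements to elements of prescribed prime(-power) order. The delicate families are those in which involution centralizers are $2$-groups, notably $\PSL_2(2^f)$ and the Suzuki groups, where one cannot simply pair a commuting involution with an odd-order element; there I would instead take $x$ and $y$ of odd prime orders dividing $2^f-1$ and $2^f+1$ respectively (automatically distinct, since these two integers are coprime), realize $\la x,y\ra$ as a solvable subgroup, and control a generating conjugate directly. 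Carrying this out uniformly across the alternating, classical, exceptional and sporadic groups is the technical heart of the argument.
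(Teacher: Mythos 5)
Your reduction to $R(G)=1$ is sound and matches the paper's first step, but you have missed the one observation that makes the whole proof short: the hypothesis is closed under passing to \emph{arbitrary subgroups}, not just to quotients by solvable normal subgroups. Indeed, if $H\leq G$ and $x,y\in H$ are a $p$-element and a $q$-element with $\la x,y\ra$ solvable, then the hypothesis on $G$ gives that $\la x,y^g\ra$ is solvable for all $g\in G$, in particular for all $g\in H$. Consequently, in a minimal counterexample every proper subgroup is solvable; combining this with your quotient reduction, a minimal counterexample is a \emph{minimal simple group}, and Thompson's classification leaves only five families ($\PSL_2(2^r)$, $\PSL_2(3^r)$, $\PSL_2(r)$ with $5\mid r^2+1$, ${}^2{\rm B}_2(2^r)$, $\PSL_3(3)$). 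In each of these one exhibits explicitly a $p$-element $x$ whose normalizer $H=N_G(\la x\ra)$ is the unique maximal subgroup containing $x$, together with a $q$-element $y\in H$, so that $\la x,y\ra\leq H$ is solvable while $\la x,y^g\ra=G$ once $y^g\notin H$. This is why the paper's proof needs only Thompson's list and no further classification input.

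By contrast, your route reduces the theorem to the assertion that \emph{every} nonabelian simple group $S$ contains a $p$-element $x$ and a $q$-element $y$ ($p\neq q$) with $\la x,y\ra$ solvable and $\la x,y^g\ra=S$ for some $g$. That assertion is plausible and, if established, your argument closes correctly (the passage through the socle and the use of $g\in S\leq G$ are fine, as is your remark about $\SL_2(5)$). But it is left entirely unproven, and it is substantially harder than anything the theorem actually requires: the spread and $\tfrac{3}{2}$-generation results you cite do not directly produce a pair of elements of \emph{prescribed distinct prime-power orders} lying in a common solvable subgroup such that one conjugate of the second already generates; pinning this down across all classical, exceptional, alternating and sporadic groups would be a project comparable to the paper's Section on real elements (which is needed only for the nilpotency criterion, Theorem \ref{th:nilpotent}, where the hypothesis is genuinely not subgroup-closed). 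So the gap is twofold: the key lemma is asserted rather than proved, and the failure to exploit subgroup-closure commits you to the full classification where Thompson's 1968 list suffices.
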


It is easy to see that Theorem \ref{th:solvable} can be stated as follows: a finite group $G$ is solvable if and only if for every pair of distinct primes $p$ and $q$ and every pair of elements $x,y\in G$ with $x$ a $p$-element and $y$ a $q$-element, if $\la x,y^g\ra$ is solvable  for some $g\in G$, then $\la x,y\ra$ is solvable. The proof of Theorem \ref{th:solvable} depends only on Thompson's  classification of minimal simple groups in \cite{Thompson}. Numerous variations of Thompson's theorem mentioned earlier have been obtained in the literature. We mention a few here.

\smallskip
(i) Guralnick and Wilson \cite{GW}  obtain a probabilistic version of Thompson's theorem stating that if the probability that two randomly chosen elements of a finite group $G$ generate a solvable group is greater than $11/30$, then $G$ is solvable.

\smallskip
(ii) Guest  \cite{Guest10} and Gordeev et. al. \cite{GGKP09}, independently,  show that a finite group $G$ is solvable if and only if $\la x,x^y\ra$ is solvable for all $x,y\in G.$

\smallskip
(iii) Kaplan and Levy \cite{KL} prove that a finite group $G$ is solvable if and only if for all odd primes $p$, for all $p$-elements $x$ and $2$-elements $y$ in $G$, the subgroup $\la x,x^y\ra$ is solvable.

\smallskip
(iv) Dolfi, Guralnick, Herzog and Praeger \cite{DGHP} prove that a finite group $G$ is solvable if and only if for all $x,y\in G$ of prime power order, there exists an element $g\in G$  such that $\la x,y^g\ra$ is solvable.

\smallskip
(v)  Guralnick and Malle \cite{GM} (see also \cite{DGHP}) show that if $\mathfrak{X}$ is a family of finite groups closed under subgroups, quotients and extensions, then a finite group $G$ belongs to $\mathfrak{X}$ if and only if, for every $x,y\in G$, $\la x,y^g\ra\in \mathfrak{X}$ for some $g\in G.$

\smallskip
(vi) Guest and Levy \cite{GL13} show that a finite group $G$ is solvable if and only if for all odd primes $p$ dividing the order of $G$, except possibly one, all $p$-elements $x\in G$ and all $2$-elements $y\in G$, then  $\la x,x^y\ra$ is solvable.

Many of these results are actually consequences of much stronger statements about the characterization of solvable radical of finite groups. Recall that for a finite group $G$, the solvable radical of $G$, denoted by $R(G)$,  is the largest normal solvable subgroup of $G$.
We next prove a nilpotency criterion which is a generalization of Corollary E in \cite{DGHP}.

\begin{theorem}\label{th:nilpotent}
Let $G$ be a finite group.  Then $G$ is nilpotent if and only if for every pair of distinct primes $p$ and $q$ and for every pair of elements $x,y\in G$ with $x$ a $p$-element and $y$ a $q$-element, if $\la x,y\ra$ is solvable, then $x$ and $y^g$ commute for some $g\in G.$
\end{theorem}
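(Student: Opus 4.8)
The forward implication is straightforward and uses no hypothesis on $\la x,y\ra$. If $G$ is nilpotent, then $G$ is the direct product of its Sylow subgroups, so a $p$-element $x$ and a $q$-element $y$ with $p\ne q$ lie in complementary Sylow factors and hence commute outright; thus $[x,y]=1$ and one may take $g=1$ (the assumption that $\la x,y\ra$ be solvable is automatic here).

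For the converse, write ($\ast$) for the stated condition. I would split the argument into two steps: first deduce that ($\ast$) forces $G$ to be solvable, and then upgrade solvability to nilpotency. For the \emph{solvability step} I would argue by contraposition, the goal being: in any non-solvable finite group there exist distinct primes $p,q$, a $p$-element $x$ and a $q$-element $y$ with $\la x,y\ra$ solvable but $[x,y^g]\ne 1$ for every $g\in G$. Exactly as for Theorem \ref{th:solvable}, this reduces via Thompson's classification of minimal simple groups \cite{Thompson} to producing such a pair in each group on Thompson's list: in every case one can take $x$ an involution and $y$ an element of odd prime order lying together in a dihedral (hence solvable) subgroup, while $C_G(x)$ has trivial odd part, so $x$ commutes with no nontrivial odd-order element and in particular with no conjugate of $y$. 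This is where ($\ast$) breaks.

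For the \emph{nilpotency step}, once $G$ is known to be solvable every subgroup $\la x,y\ra$ is automatically solvable, so the conditional hypothesis ($\ast$) becomes unconditional: for all distinct primes $p,q$, every $p$-element $x$ and every $q$-element $y$ satisfy $[x,y^g]=1$ for some $g\in G$. This is precisely the hypothesis of the nilpotency criterion of Dolfi, Guralnick, Herzog and Praeger \cite[Corollary E]{DGHP} that this theorem generalizes, and it yields that $G$ is nilpotent. I would note that this unconditional form is closed under quotients (a coprime prime-power pair in $G/N$ lifts to a coprime prime-power pair in $G$, and a commuting conjugate projects to one), which is the structural property that makes the reduction to the minimal non-nilpotent (Schmidt) case, underlying \cite{DGHP}, go through; the obstruction is already visible there, since in a Schmidt group $P\rtimes\la y\ra$ one has $C_P(\la y\ra)\le\Phi(P)$.

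The main obstacle is the solvability step, and within it the \emph{descent} from a minimal simple section back to the ambient group $G$. The non-commuting conclusion $[x,y^g]\ne 1$ must be secured for all $g\in G$, not merely inside the section where the clean centralizer computation lives, so the reduction has to track the $G$-conjugacy classes of $p$-elements and the odd part of $C_G(x)$ throughout. By contrast, the nilpotency step is essentially formal once solvability is in hand, being a direct appeal to \cite{DGHP} after the hypothesis has been rendered unconditional.
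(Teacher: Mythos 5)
Your forward direction and your nilpotency step are exactly the paper's: once solvability is established, the hypothesis becomes unconditional and \cite[Corollary E]{DGHP} finishes the proof. The gap is in the solvability step. Your claim that it ``reduces via Thompson's classification of minimal simple groups, exactly as for Theorem \ref{th:solvable}'' is not correct, and the obstacle you flag at the end is precisely the point where the reduction breaks irreparably rather than being a technicality to track. The reduction in Theorem \ref{th:solvable} works because the condition there (``$\la x,y^g\ra$ solvable for \emph{all} $g\in G$'') is inherited by subgroups, so induction forces every proper subgroup to be solvable and $G$ to be a minimal simple group. The present condition is \emph{not} subgroup-closed: for $x,y$ in a proper subgroup $H$ the hypothesis only yields a commuting conjugate $y^g$ with $g\in G$, not $g\in H$, so you cannot conclude that proper subgroups satisfy the condition and the induction to Thompson's list collapses. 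What survives is closure under quotients, which lets one assume $R(G)=1$; but then the minimal normal subgroup is $S^k$ for an \emph{arbitrary} nonabelian simple group $S$, and one needs the witness pair in every such $S$, with the non-commuting conclusion robust under all of $\Aut(S)$ (to handle the wreath-product conjugation in $S^k$). This is the content of Proposition \ref{prop:solvable} in the paper, which rests on Theorem \ref{th:almost simple} and hence on the full classification of finite simple groups via Theorem \ref{th:odd-centralizer} --- not merely on Thompson's list. The introduction states this dependence explicitly.

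Two further points. First, even on Thompson's list your proposed witnesses fail: in $\PSL_2(13)$ (which is minimal simple, as $5\mid 13^2+1$) the centralizer of an involution is $\mathrm{D}_{12}$, whose odd part is nontrivial, so an involution $x$ does commute with a conjugate of any element of order $3$; one must choose the odd prime more carefully. Second, the paper's witnesses go the other way round: $x$ is a real element of \emph{odd} prime power order with $|C_S(x)|$ odd and $y$ is a $2$-element inverting it --- and even this fails for ${\rm M}_{12}$, ${\rm Co}_1$, $\Alt_{24}$ and $\Alt_n$ with $n\ge 42$, which require a separate normalizer argument (Theorem \ref{th:odd-centralizer}(ii)). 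So the solvability step genuinely needs the machinery of Section \ref{sec2}, and your outline does not supply a substitute for it.
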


The proof of Theorem \ref{th:nilpotent} depends on the full classification of finite groups. In particular, it depends on the existence of certain elements in finite simple groups.  An element $x$ of a finite group $G$ is  real  if $x^g=x^{-1}$ for some $g\in G$. The element $g\in G$ inverting a real element $x$ can be chosen to be a $2$-element. The existence or non-existence of real elements with certain property is an important question in finite group theory. In \cite{DNT}, it is shown that a finite group $G$ has a normal Sylow $2$-subgroup if and only if $G$ has no nontrivial real elements of odd order.  Suzuki \cite{Suzuki} determines the structure of finite groups $G$ whose all involution centralizers have normal Sylow $2$-subgroups. As it turns out  this condition is equivalent to the fact that $G$ has no real element of order $2m$, where $m>1$ is an odd integer. The groups with these properties were further investigated in \cite{DGN}.

It follows from \cite{DNT} that every finite nonabelian simple group has a nontrivial real element of odd order.  In the next result, we show that with some exceptions, every nonabelian simple group contains a real element of odd order and odd centralizer.

\begin{theorem}\label{th:odd-centralizer}
Let $S$ be a nonabelian simple group. Then 

\smallskip
{\rm{(i)}} $S$ has a nontrivial real element  of odd  prime power order whose centralizer has odd order; or

\smallskip
{\rm(ii)} $S$ is isomorphic to  ${\rm M}_{12}, {\rm Co}_{1}$  or the alternating groups $\Alt_n$ with $n\ge 42$ or $n=24.$ In these cases, there exist two elements $x,y\in S$, where $x$ is a $p$-element for some odd prime $p$ and $y$ is a $2$-element, such that $y$ normalizes $\la x\ra$ but no conjugate of $y$ in $\Aut(S)$ commutes with $x$.
\end{theorem}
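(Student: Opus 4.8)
The plan is to prove the dichotomy by the classification of finite simple groups, treating the alternating, sporadic and Lie type families in turn, and in each case either exhibiting a real element of odd prime power order whose centralizer has odd order (conclusion (i)) or, for the listed exceptions, producing the pair $x,y$ of conclusion (ii). Throughout I would use two elementary observations: an element is real exactly when it is conjugate to its inverse, and the inverting element may be taken to be a $2$-element; and if $C_S(x)$ has odd order, then no nontrivial $2$-element of $S$ centralises $x$, so that for such an $x$ the inverting $2$-element $y$ automatically satisfies the non-commuting requirement of (ii). This reduces matters to an existence question for suitable real elements of odd prime power order and odd centralizer.

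For the groups of Lie type I would realise the required element as a regular semisimple element of prime order $\ell$, where $\ell$ is an odd primitive prime divisor of $q^d-1$ for a suitably chosen $d$. Such an element lies in a maximal torus $T$ (parametrised by an $F$-conjugacy class of the Weyl group $W$) with $\ell \mid |T|$, and being regular it has $C_S(x)=T$; choosing $T$ of odd order then forces its centralizer to have odd order. Reality is governed by the existence of an element inverting $T$: when $-1\in W$ (types $B$, $C$, $D_{\text{even}}$, $E_7$, $E_8$, $F_4$, $G_2$) this is immediate, whereas for types $A$, ${}^2A$, $D_{\text{odd}}$ and $E_6$ I would combine the graph automorphism with the longest element of $W$, following the reality criteria for semisimple classes due to Gow and to Tiep--Zalesski. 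A bounded number of small-rank and small-field cases would be settled by direct inspection of the relevant class and character data.

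For the alternating groups I would pass to cycle types: an element of odd prime power order is precisely a permutation all of whose cycle lengths are powers of a fixed odd prime $p$, and the $2$-part of $|C_{\Alt_n}(x)|$ is read off from the multiplicities of the cycle lengths together with the splitting behaviour of the $\Sym_n$-class in $\Alt_n$. Reality in $\Alt_n$ is decided by whether the class splits and, when it does, by the parity of the cycle-reversing permutation. Using these rules I would construct explicit cycle types yielding a real element with odd centralizer for every $n$ outside the exceptional set, and then establish, by a combinatorial argument on the representations of $n$ as sums of $p$-power parts subject to the parity constraints, that no admissible cycle type survives for $n=24$ and for $n\ge 42$; for those degrees I would instead produce the pair $x,y$ of (ii), with $y$ a $2$-element normalising but not centralising $\la x\ra$ and no $\Aut(\Alt_n)$-conjugate of $y$ commuting with $x$.

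The sporadic groups would be handled by direct computation with their conjugacy classes, power maps, centralizer orders and real classes (as recorded in the Atlas and in {\sf GAP}), locating a real class of odd prime power order and odd centralizer in every case except ${\rm M}_{12}$ and ${\rm Co}_{1}$, for which the pair $x,y$ of (ii) would be exhibited explicitly. I expect the main obstacle to be the uniform treatment of the groups of Lie type: the real difficulty is to guarantee reality and oddness of the centralizer \emph{simultaneously} across all families and both parities of the characteristic, in particular for the types with $-1\notin W$ and for the small cases where odd-order tori carrying regular real elements are scarce. Pinning down the exact list of exceptional alternating degrees is the secondary delicate point.
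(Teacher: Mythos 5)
Your overall architecture (CFSG, case analysis by family, primitive prime divisor elements for Lie type, GAP for sporadics and small degrees) matches the paper's, but two of your key mechanisms do not work as described.

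First, the alternating groups. You propose to prove that for $n=24$ and for all $n\ge 42$ ``no admissible cycle type survives,'' i.e.\ that conclusion (i) genuinely fails there. That is false, and already at $n=42$: a $41$-cycle $x\in\Alt_{42}$ has $C_{\Alt_{42}}(x)\cong C_{41}$ of odd order and is inverted by a product of $20$ transpositions, which is an even permutation, so $x$ is a real element of odd prime order with odd centralizer and $\Alt_{42}$ satisfies (i). The theorem is a disjunction: the paper does not claim (i) fails for $n\ge 42$; it simply proves (ii) uniformly for $n\ge42$ because a uniform construction exists there (Bertrand's postulate gives a prime $p$ with $n/2\le p<n$; one takes $x$ a $p$-cycle and $y$ a $2$-power of a $(p-1)$-cycle times an extra transposition, and $y$ moves $p+1>n-p$ points, so no conjugate of $y$ can land in $C_S(x)=\la x\ra\times\mathrm{Alt}(\Omega_2)$). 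Your plan as stated sets out to prove a stronger statement that is not true, and in any case the hard direction for you would be the wrong one.

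Second, the groups of Lie type. For the types with $-1\notin W$ you propose to get reality of a regular semisimple ppd element by combining the graph automorphism with $w_0$. But for $S=\PSL_n(q)$ ($n\ge3$) the graph automorphism is outer, so the element of the odd-order Coxeter-type torus that this produces is inverted only in $\Aut(S)$, not in $S$: e.g.\ an element of order $\ell_3(q)$ in $\PSL_3(q)$ satisfies $x^{-1}=x^{q^i}$ for no $i$ (the order of $q$ mod $\ell_3$ is $3$, which is odd), so it is not real in $S$. Since type $A$ and ${}^2A$ are infinite families, this cannot be swept into ``a bounded number of small cases.'' The paper circumvents this with two devices absent from your sketch: embedding even-dimensional ppd elements into $\Sp_m(q)\le\SL_n(q)$ or $\SU_n(q)$ (where every semisimple element is real), and using real/rational \emph{regular unipotent} elements in odd characteristic (whose centralizer order, $dq^{n-1}$ or similar, is odd), together with a lemma controlling how centralizer orders change when passing from the quasisimple cover to $S$. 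Without some substitute for these, your treatment of $\PSL_n^{\pm}(q)$ does not go through.
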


The proof of this theorem depends on the classification of finite simple groups, in particular, results in \cite{GNT, MT,TZ04,TZ05} concerning the existence of real elements and their centralizers in finite simple groups of Lie type.
\begin{remk}\label{rem1} We record here the following observations.

\smallskip
(i) The  sporadic simple Conway group $\textrm{Co}_1$  has a self-centralizing real element of order $35$  but it does not have any real element  of prime power order with an odd centralizer. 

\smallskip
(ii)  Every real element of $\textrm{M}_{12}$ has an even order centralizer.

\smallskip
(iii) For all integers $5\leq n\leq 41$, only the  alternating group $\Alt_{24} $ of degree $24$ does not have a prime power real element with an odd centralizer.   

\smallskip
(iv) The alternating group $\Alt_n$ with $n\ge 5$, always has a real element of odd order with an odd centralizer.
\end{remk}

Recall that a finite group $G$ is an almost simple group with socle $S$ if there exists a finite nonabelian simple group $S$ such that $S\unlhd G\leq \Aut(S)$. 
We deduce the following consequence of Theorem \ref{th:odd-centralizer}.

\begin{theorem}\label{th:almost simple}
Let $G$ be a finite almost simple group with socle $S$. Then $S$ has a nontrivial $p$-element $x$, for some odd prime $p$, and a nontrivial $2$-element $y$, such that $y$ normalizes $\la x\ra$ and hence $\langle x,y\ra$ is solvable but $x$ does not commute with any $G$-conjugate of $y$.
\end{theorem}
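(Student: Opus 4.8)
The plan is to derive Theorem~\ref{th:almost simple} directly from Theorem~\ref{th:odd-centralizer}, splitting into the two cases provided by that result. The goal is to produce, inside the socle $S$, an odd $p$-element $x$ and a $2$-element $y$ with $y$ normalizing $\la x\ra$ (so that $\la x,y\ra$ is solvable, being a $\{p,q\}$-group with $\la x\ra$ normal, hence metacyclic) while no $G$-conjugate of $y$ commutes with $x$. The subtle point throughout is the distinction between conjugacy in $S$, in $G$, and in $\Aut(S)$: since $S\le G\le\Aut(S)$, conjugacy in $G$ is intermediate, so a statement ruling out commuting conjugates in $\Aut(S)$ is the strongest and will imply the corresponding statement in $G$.

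First I would treat case (i) of Theorem~\ref{th:odd-centralizer}. Here $S$ has a nontrivial real element $x$ of odd prime power order, say a $p$-element, whose centralizer $C_S(x)$ has odd order. Because $x$ is real, there is an element inverting it which may be taken to be a $2$-element; I would fix such a $2$-element $y\in S$ with $y^{-1}xy=x^{-1}$. Then $y$ normalizes $\la x\ra$, so $\la x,y\ra$ is solvable. It remains to check that no $G$-conjugate of $y$ commutes with $x$. I would argue by contradiction: if some conjugate $y^g$ ($g\in G$) commuted with $x$, then $y^g\in C_G(x)$. The key observation is that $y^g$ is a nontrivial $2$-element, so its presence forces $2\mid |C_G(x)|$. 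I would then need to control the $2$-part of $C_G(x)$, and the cleanest route is to show that $C_G(x)$ has odd order, or at least that any $2$-element centralizing $x$ leads to a contradiction with the order of $C_S(x)$ being odd. This requires relating $C_G(x)$ to $C_S(x)$: one has $C_S(x)=C_G(x)\cap S\trianglelefteqslant C_G(x)$ with $C_G(x)/C_S(x)$ embedding in $G/S\le\Out(S)$. The main obstacle is exactly here --- in general $|C_G(x)|$ can be even even when $|C_S(x)|$ is odd, since outer automorphisms can centralize $x$. I would resolve this by replacing $y$ with a suitable power that is inverted only within $S$, or by appealing more carefully to the construction of $x$ in the proof of Theorem~\ref{th:odd-centralizer}, which is designed so that $C_{\Aut(S)}(x)$ (not merely $C_S(x)$) has the relevant $2$-part controlled; invoking the $\Aut(S)$-level statement then handles all intermediate $G$ uniformly.

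Next I would handle case (ii), where $S$ is one of $\mathrm{M}_{12}$, $\mathrm{Co}_1$, or $\Alt_n$ with $n\ge 42$ or $n=24$. In this case Theorem~\ref{th:odd-centralizer}(ii) already furnishes the desired pair: it directly supplies an odd $p$-element $x$ and a $2$-element $y$ with $y$ normalizing $\la x\ra$ and with no $\Aut(S)$-conjugate of $y$ commuting with $x$. Since conjugacy in $G$ is a special case of conjugacy in $\Aut(S)$, the conclusion of Theorem~\ref{th:almost simple} is immediate for these groups. Thus the only genuine work is in case (i), and specifically in promoting the ``odd centralizer in $S$'' hypothesis to a statement about $G$-conjugates.

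The expected main obstacle, as noted, is bridging the gap between centralizers in $S$ and centralizers in the larger group $G$. I would address this by proving a lemma stating: if $x\in S$ is a $p$-element ($p$ odd) whose centralizer $C_{\Aut(S)}(x)$ has odd order, then no $2$-element of $\Aut(S)$ commutes with $x$, and hence the conclusion holds for every $G$ with $S\le G\le\Aut(S)$. To establish the odd-centralizer condition at the $\Aut(S)$ level in case (i), I would revisit the explicit real elements constructed for each family of simple groups (alternating, classical, exceptional, sporadic) in the proof of Theorem~\ref{th:odd-centralizer}, citing the centralizer computations in \cite{GNT,MT,TZ04,TZ05}; for most families the full automizer centralizer is already odd, and the few cases where a diagonal or field automorphism might centralize $x$ can be excluded by a parity count on $|\Aut(S)|_p$ versus $|\Aut(S):S|$. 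Once this lemma is in place, combining it with case (ii) completes the proof.
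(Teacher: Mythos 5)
Your case (ii) is handled exactly as in the paper, and your overall skeleton for case (i) (take the real $p$-element $x$ with $|C_S(x)|$ odd, take a $2$-element $y\in S$ inverting $x$, note $\la x,y\ra$ is solvable, and rule out commuting $G$-conjugates of $y$) is also the paper's. But the ``main obstacle'' you identify in case (i) does not exist, and the fix you propose for it is wrong. The point you miss is that $y$ is chosen \emph{inside $S$}, and $S\trianglelefteqslant G$; hence for every $g\in G$ the conjugate $y^g$ again lies in $S$. So if $[x,y^g]=1$ then $y^g\in C_G(x)\cap S=C_S(x)$, and $y^g$ is a nontrivial $2$-element while $|C_S(x)|$ is odd --- an immediate contradiction. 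No information about $C_G(x)$ or $C_{\Aut(S)}(x)$ is needed; the paper's proof is exactly this one line.

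Worse, the lemma you propose to plug the perceived gap --- that $C_{\Aut(S)}(x)$ has odd order for the elements constructed in Theorem \ref{th:odd-centralizer}(i) --- is false in general, so the route you sketch cannot be completed as stated. For instance, take $S=\PSL_2(25)$; the construction in Lemma \ref{lem:L} (the case $q\equiv 1\bmod 4$) uses a real element $x$ of order $5$ in the unipotent radical of a Borel subgroup with $C_S(x)=P$ of order $25$, and one may take $x$ to be the image of a transvection with entries in the prime field $\FF_5$. The field automorphism of order $2$ then centralizes $x$, so $C_{\Aut(S)}(x)$ has even order even though $|C_S(x)|$ is odd. This does not threaten the theorem precisely because that field involution is not a $G$-conjugate of the chosen $y\in S$ (normality of $S$ keeps all conjugates of $y$ inside $S$), which is why the correct argument works and your proposed strengthening does not. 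You should delete the discussion of ``promoting the odd-centralizer hypothesis to $\Aut(S)$'' and replace it with the normality observation.
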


We now use Theorem \ref{th:almost simple} to establish the following solvability criterion.

\begin{theorem}\label{th:direct-factor}
Let $G$ be a finite group. Assume that for every pair of two nontrivial elements $x$ and $y$, where $x$ is a $p$-element for some odd prime $p$, and $y$ is a $2$-element, if $\la x,y\ra$ is solvable, then $x$ commutes with some $G$-conjugate of $y$. Then a Sylow $2$-subgroup of $G$ is a direct factor of $G$.
\end{theorem}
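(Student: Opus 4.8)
The plan is to prove the equivalent statement that $G=O_2(G)\times O_{2'}(G)$, i.e. that $G$ is simultaneously $2$-closed (so $O_2(G)$ is a Sylow $2$-subgroup of $G$) and $2$-nilpotent (so $O_{2'}(G)$ is a normal $2$-complement). Granting both, $O_2(G)$ and $O_{2'}(G)$ are normal subgroups of coprime orders with $|O_2(G)|\,|O_{2'}(G)|=|G|$, so they centralize one another and $G=O_2(G)\times O_{2'}(G)$, which is exactly the assertion that a Sylow $2$-subgroup is a direct factor. Writing (H) for the hypothesis, I would split the argument into (A) $G$ is solvable, and (B) the solvable case.

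For (A) the key observation is that (H) passes to $G/R(G)$: given a $p$-element $\bar x$ and a $2$-element $\bar y$ of $G/R(G)$ generating a solvable subgroup, lift them to a $p$-element $x$ and a $2$-element $y$ of $G$; since $R(G)$ is solvable, $\la x,y\ra R(G)$ and hence $\la x,y\ra$ is solvable, so (H) applies in $G$ and projects down. Thus it suffices to show that a nontrivial group $K$ with $R(K)=1$ violates (H), for this forces $G/R(G)=1$. For such $K$ we have $F^*(K)=\soc(K)$, a direct product of nonabelian simple groups, with $C_K(\soc K)=1$, so $K$ embeds in $\Aut(\soc K)$. Fixing a simple factor $S$ and passing to the almost simple group $N_K(S)/C_K(S)$ (socle $\cong S$), Theorem \ref{th:almost simple} supplies a $p$-element $x\in S$ ($p$ odd) and a $2$-element $y$ normalizing $\la x\ra$ — so $\la x,y\ra$ is solvable — with $x$ centralizing no conjugate of $y$ inside that almost simple group. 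I would then promote this to a genuine bad pair in $K$, contradicting (H).

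For (B), assume $G$ solvable; then (H) becomes the cleaner condition (H$'$): every odd $p$-element commutes with some $G$-conjugate of every $2$-element (solvability of $\la x,y\ra$ is automatic), and (H$'$) is inherited by every quotient. Take a minimal counterexample $G$. Applying minimality to $G/O_2(G)$ and to $G/O_{2'}(G)$ shows that if both are nontrivial then $G$ is $2$-closed and $2$-nilpotent, a contradiction; since $F(G)\ne1$, exactly one of $O_2(G),O_{2'}(G)$ is trivial. If $O_2(G)=1$, then $G=N\rtimes P$ with $N=O_{2'}(G)$ odd and $P$ a nontrivial Sylow $2$-subgroup acting faithfully on $N$ (as $C_G(N)\le N$); choosing a central involution $a\in Z(P)$ and, by coprime action, a nontrivial $q$-element $x\in N$ inverted by $a$, I note that every $G$-conjugate of $a$ lies in the coset $Na$ and centralizes $x$ only if some element of $N$ inverts $x$ by conjugation — impossible in a group of odd order — so $(x,a)$ is a bad pair. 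If $O_{2'}(G)=1$, then $G=V\rtimes H$ with $V=O_2(G)$ a Sylow $2$-subgroup and $H\ne1$ odd acting faithfully; reducing modulo $\Phi(V)$ makes $V$ elementary abelian, and for a prime-order $h\in Z(F(H))$ the normal abelian subgroup $B=\la h^H\ra$ acts faithfully and coprimely on $V$, hence has a regular orbit (a standard fact for faithful coprime actions of abelian groups). Any $v$ in such an orbit satisfies $\mathrm{Stab}_H(v)\cap B=1$; as every conjugate of $h$ lies in $B$, no conjugate of $h$ fixes $v$, equivalently $h$ fixes no point of $v^G=v^H$, so $(h,v)$ is a bad pair. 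Either way (H$'$) fails, so no counterexample exists and (B) holds.

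I expect the crux to be promoting the almost simple bad pair to $K$ in step (A). Theorem \ref{th:almost simple} only controls conjugacy inside one almost simple group, whereas in $K$ the centralizer of an element supported on a single component automatically contains the remaining (even-order) components, and outer automorphisms may fuse $2$-elements, so a $K$-conjugate of $y$ could a priori commute with $x$. Overcoming this requires spreading the witnessing element across the entire $K$-orbit of the chosen component and exploiting the $\Aut(S)$-level conjugacy control recorded in Theorems \ref{th:odd-centralizer}--\ref{th:almost simple} (including the exceptional cases $\mathrm{M}_{12},\mathrm{Co}_1,\Alt_n$ treated there). This wreath-product bookkeeping is the delicate part; by contrast, the solvable endgame in (B), built from the central-involution/real-element trick and the abelian regular-orbit argument, is comparatively routine.
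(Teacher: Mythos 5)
Your two-stage decomposition matches the paper's overall architecture, but the two halves compare differently. Stage (A) is essentially Proposition \ref{prop:solvable}: the paper also reduces to $R(G)=1$, takes a minimal normal subgroup $N\cong S^k$, and the ``delicate wreath-product bookkeeping'' you defer is settled there by exactly the device you anticipate — take $x=(a,\dots,a)$ and $y=(b,\dots,b)$ diagonal in $N$, so that any $G$-conjugate of $y$ has the form $(b^{u_1^{-1}},\dots,b^{u_k^{-1}})$ with $u_i\in\Aut(S)$, and commuting with $x$ forces $a$ to commute with an $\Aut(S)$-conjugate of $b$, contradicting the $\Aut(S)$-level control in Theorem \ref{th:almost simple}. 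Stage (B) is where you genuinely diverge: the paper picks $1\neq z\in Z(P)$, inducts on $G/\la z\ra$ when $z\in Z(G)$, and otherwise invokes the Fein--Kantor--Schacher theorem (a CFSG-dependent input) to produce a $p$-element, $p$ necessarily odd since $P\leq C_G(z)$, whose class misses $C_G(z)$. Your route — minimal counterexample, exactly one of $O_2(G),O_{2'}(G)$ trivial, the commutator $[n,a]$ inverted by a central involution together with the fact that no nontrivial element of an odd-order group is real, and a regular orbit for the faithful coprime action of the abelian group $\la h^H\ra$ on $O_2(G)$ — is correct and entirely elementary. Two points of hygiene: in the case $O_2(G)=1$ you should say explicitly that minimality applied to $G/O_{2'}(G)$ is what forces that quotient to be a $2$-group (this is what justifies $G=O_{2'}(G)\rtimes P$ and hence that every conjugate of $a\in Z(P)$ lies in the coset $O_{2'}(G)a$, which does not follow from $O_2(G)=1$ alone); and in the other case ``every conjugate of $h$ lies in $B$'' must be read as $H$-conjugates, which suffices precisely because $v^G=v^H$. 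Each approach buys something: the paper's solvable endgame is shorter granted FKS, while yours removes a classification-dependent ingredient from that half of the argument.
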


Before we can present some applications of Theorems \ref{th:solvable} and \ref{th:nilpotent}, we need to define certain graphs attached to a finite group $G$.
For  $x\in G$, we write  $x^G=\{g^{-1}xg:g\in G\} $ for the conjugacy class of $G$ containing $x$. In their seminal paper \cite{BF}, Brauer and Fowler define the commuting graph of a finite group $G$ as a finite simple graph whose vertex set is the set of all nontrivial elements of $G$ and two distinct vertices $a$ and $b$ are joined if $a$ and $b$ commute. Similarly, one can define the solvable graph of $G$ as a simple graph whose vertex set is the set of all nontrivial elements of $G$ and there is an edge between two distinct vertices $a$ and $b$ if $\la a,b\ra$ is solvable. The nilpotent graph can be defined in the same way. These graphs have been studied extensively in the literature.  For recent results on the solvable graphs, see \cite{ALMM, BLN}.

Generalizing the commuting graph using  conjugacy classes, Herzog, Longobardi and Maj   \cite{HLM} define  the commuting conjugacy class graph ($\textrm{CCC}$-graph) of $G$ as a simple graph whose vertex set is the set of nontrivial conjugacy classes of $G$ and two distinct vertices $C$ and $D$ are adjacent if $\langle c,d\rangle$ is abelian  for some $c\in C$ and $d\in D.$ 
The solvable conjugacy class graph ($\SCC$-graph) and the nilpotent conjugacy class graph ($\NCC$-graph) can be defined similarly (see \cite{BCNS}). Finally, following \cite{BCNS}, one can define a variant of these graphs as follows. The expanded $\SCC$-graph is a simple undirected graph whose vertex set is the set of nontrivial elements of $G$ and two distinct elements $x,y\in G$ are adjacent if $x^G=y^G$ or $x^G$ and $y^G$ are adjacent in the $\SCC$-graph of $G$.    The expanded $\NCC$-graph and the expanded  $\CCC$-graph on a group can be defined similarly. Note that these graphs are called conjugacy super graphs in \cite{ACN}. Interested readers are referred to recent survey papers \cite{ACN,Cameron}  for more information about these graphs.

As direct consequences of Theorems \ref{th:solvable} and \ref{th:nilpotent}, we obtain the following corollaries. The first two corollaries give a positive answer to Problem 2.4 in \cite{BCNS}.

\begin{corol}\label{cor:SCC-graph}
Let $G$ be a finite group. The expanded $\SCC$-graph of $G$ is equal to the solvable graph of $G$ if and only if $G$ is solvable.
\end{corol}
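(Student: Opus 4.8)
The plan is to compare the two graphs edge by edge and then invoke Theorem~\ref{th:solvable}. Both graphs have the same vertex set, namely the set of nontrivial elements of $G$, so only the edges need to be compared. First I would record the inclusion that holds for every finite group: if $x$ and $y$ are adjacent in the solvable graph, then $\la x,y\ra$ is solvable, and taking $g=1$ shows that $\la x,y^g\ra$ is solvable for some $g\in G$, so $x$ and $y$ are adjacent in the expanded $\SCC$-graph. Thus the solvable graph is always a subgraph of the expanded $\SCC$-graph, and the two coincide if and only if the reverse containment holds. This settles one direction at once: if $G$ is solvable then every $2$-generated subgroup is solvable, both graphs are complete on the nontrivial elements of $G$, and hence they are equal.

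For the converse, assume the two graphs are equal, equivalently that every edge of the expanded $\SCC$-graph is an edge of the solvable graph. I would unwind adjacency in the expanded $\SCC$-graph: distinct vertices $x,y$ are joined precisely when $x^G=y^G$, or when the classes $x^G$ and $y^G$ are adjacent in the $\SCC$-graph, that is, $\la c,d\ra$ is solvable for some $c\in x^G$ and $d\in y^G$. Writing $c=x^{g_1}$, $d=y^{g_2}$ and conjugating by $g_1^{-1}$, the second alternative is equivalent to the existence of $g\in G$ with $\la x,y^g\ra$ solvable. Hence the assumed containment yields, for all nontrivial $x,y\in G$, the implication that if $\la x,y^g\ra$ is solvable for some $g\in G$ then $\la x,y\ra$ is solvable; this uses only the second alternative, so the clause $x^G=y^G$ plays no role and need not be examined. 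Specializing to $x$ a $p$-element and $y$ a $q$-element with $p\ne q$ distinct primes gives exactly the reformulated hypothesis of Theorem~\ref{th:solvable}, which then forces $G$ to be solvable.

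I do not anticipate a genuine obstacle, since the substantive content is carried entirely by Theorem~\ref{th:solvable}; the only care needed lies in the bookkeeping around the expanded $\SCC$-graph. Concretely, one must check that the class-level adjacency ``$\la c,d\ra$ solvable for some $c\in C$, $d\in D$'' coincides, after conjugation, with ``$\la x,y^g\ra$ solvable for some $g\in G$'', and observe that the separate clause $x^G=y^G$ is irrelevant to extracting the solvability criterion. The crux is simply the identification of the reverse edge-containment with the implication, in Theorem~\ref{th:solvable}, that solvability of $\la x,y^g\ra$ for some $g$ entails solvability of $\la x,y\ra$, restricted to elements of coprime prime-power order.
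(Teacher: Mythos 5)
Your proposal is correct and follows essentially the same route as the paper: both unwind adjacency in the expanded $\SCC$-graph via conjugation to obtain, for elements of coprime prime power order (which are automatically distinct and non-conjugate, so the clause $x^G=y^G$ indeed never intervenes), the implication that solvability of $\la x,y^g\ra$ for some $g$ forces solvability of $\la x,y\ra$, and then invoke Theorem~\ref{th:solvable}. The only cosmetic difference is that you apply the reformulated version of Theorem~\ref{th:solvable} stated in the introduction directly, whereas the paper's proof adds one more conjugation step to recast the condition in the theorem's original form.
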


\begin{corol}\label{cor:NCC-graph}
Let $G$ be a finite group. The expanded $\SCC$-graph of $G$ is equal to the expanded $\NCC$-graph of $G$ if and only if $G$ is nilpotent.
\end{corol}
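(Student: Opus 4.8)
The plan is to derive this corollary directly from Theorem \ref{th:nilpotent}, the only real work being to translate the graph-theoretic hypothesis into the element-wise condition appearing there. First observe that, since nilpotency implies solvability, if $\langle c,d\rangle$ is nilpotent then it is solvable; hence the expanded $\NCC$-graph is always a subgraph of the expanded $\SCC$-graph (both having the same vertex set), and the two graphs coincide exactly when every edge of the expanded $\SCC$-graph is also an edge of the expanded $\NCC$-graph. For the easy direction, if $G$ is nilpotent then every $2$-generated subgroup $\langle x,y\rangle$ is nilpotent (a subgroup of a nilpotent group is nilpotent), so any two distinct nontrivial elements are adjacent in both graphs, via $g=1$; thus both graphs are complete on the nontrivial elements of $G$ and are therefore equal.

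For the converse, I would assume the two expanded graphs are equal and verify the hypothesis of Theorem \ref{th:nilpotent}. Fix distinct primes $p,q$, a $p$-element $x$ and a $q$-element $y$ with $\langle x,y\rangle$ solvable, and aim to produce $g\in G$ with $x$ and $y^g$ commuting. If $x$ or $y$ is trivial the conclusion is immediate, since the identity commutes with everything, so assume both are nontrivial; then $x\neq y$, because their orders are nontrivial powers of the distinct primes $p$ and $q$. Since $\langle x,y\rangle=\langle x,y^{1}\rangle$ is solvable, the classes $x^G$ and $y^G$ are adjacent in the $\SCC$-graph, so $x$ and $y$ are adjacent in the expanded $\SCC$-graph; by the assumed equality they are then adjacent in the expanded $\NCC$-graph.

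Here I would use the key observation that the escape clause $x^G=y^G$ in the definition of adjacency cannot occur: conjugate elements have equal order, whereas $|x|$ is a nontrivial power of $p$ and $|y|$ a nontrivial power of $q$ with $p\neq q$. Consequently the adjacency in the expanded $\NCC$-graph must come from the $\NCC$-graph itself, i.e. there exist conjugates $x^{g_1}$ and $y^{g_2}$ with $\langle x^{g_1},y^{g_2}\rangle$ nilpotent; conjugating this subgroup by $g_1^{-1}$ yields that $\langle x,y^{g}\rangle$ is nilpotent for $g=g_2 g_1^{-1}$. Finally, since $x$ is a $p$-element and $y^g$ a $q$-element of coprime order, a nilpotent $\langle x,y^g\rangle$ is the direct product of its Sylow subgroups, whence $x$ and $y^g$ commute. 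This verifies the hypothesis of Theorem \ref{th:nilpotent}, so $G$ is nilpotent. I do not expect a genuine obstacle: the entire content is carried by Theorem \ref{th:nilpotent}, and the only points demanding care are the bookkeeping around the expanded-graph definition, namely ruling out the $x^G=y^G$ clause and matching ``nilpotent'' with ``commute'' for coprime prime-power elements.
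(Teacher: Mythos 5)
Your proposal is correct and follows essentially the same route as the paper: both directions reduce to Theorem \ref{th:nilpotent}, with the converse obtained by noting that non-conjugate elements adjacent in the expanded $\SCC$-graph must be adjacent in the $\NCC$-graph itself and that coprime prime-power elements generating a nilpotent group commute. Your additional bookkeeping (ruling out the $x^G=y^G$ clause via distinct orders, handling trivial elements) only makes explicit what the paper leaves implicit.
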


A finite group $G$ is said to be invariably generated by two elements $x,y\in G$ if $G=\la x^a,y^b\ra$ for all $a,b\in G.$ Invariable generation attracts much attention recently as it has applications to computational Galois theory. It is known that every nonabelian simple group can be invariably generated by two elements (see,  \cite{GM, KLS}). One can define an invariable generating graph on a finite group $G$ as follows. The vertex set of this graph is the set of nontrivial elements of  $G$ and two vertices are adjacent if and only if they invariably generate $G$. In \cite[Problem 1]{ACN}, the authors show that if $G$ is a non-nilpotent (non-abelian) group, then the invariable generating graph is contained in the complement of the expanded $\NCC$-graph (or $\CCC$-graph) and they ask when the equality holds. The same question was asked for the expanded $\SCC$-graph. The next two corollaries partially address these questions.

\begin{corol}\label{cor:invariable-gen-graph}
Let $G$ be a finite non-nilpotent group.  Then $G$ is minimal non-nilpotent if and only if  for all $x,y\in G$ of coprime prime power order, if $x$ and $y$ do not generate $G$, then $x$ and $y^g$ commute for some $g\in G$. 
\end{corol}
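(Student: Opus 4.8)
The plan is to prove the two implications separately, leaning on Theorem~\ref{th:nilpotent} and the classical Schmidt description of a minimal non-nilpotent group $S$: one has $|S|=p^aq^b$ for two distinct primes $p,q$, with a normal Sylow $p$-subgroup, a cyclic Sylow $q$-subgroup, and \emph{every} proper subgroup nilpotent. For the forward direction, assume $G$ is minimal non-nilpotent and take $x,y$ of coprime prime power order with $\la x,y\ra\ne G$. Since only two primes divide $|G|$, I may assume $x$ is a $p$-element and $y$ a $q$-element with $p\ne q$ (if either is trivial they commute), and since $\la x,y\ra$ is a proper subgroup it is nilpotent. In a nilpotent group the $p$-part and the $q$-part lie in distinct, mutually centralising Sylow subgroups, so $x$ and $y$ already commute and $g=1$ works. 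This direction needs no conjugation at all.

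For the converse I would first invoke Theorem~\ref{th:nilpotent}: since $G$ is non-nilpotent it fails the criterion, so there is a pair $x,y$ of coprime prime power order with $\la x,y\ra$ solvable for which $x$ and $y^g$ fail to commute for every $g\in G$. The contrapositive of the hypothesis then forces $\la x,y\ra=G$, so $G=\la x,y\ra$ is solvable and generated by a $p$-element and a $q$-element. It remains to show that every proper subgroup is nilpotent. Arguing by contraposition, I must show that if $G$ is non-nilpotent but not minimal non-nilpotent, then there is a \textbf{bad pair}: elements $u,v$ of coprime prime power order with $\la u,v\ra\ne G$ such that no $G$-conjugate of $v$ commutes with $u$, violating the hypothesis.

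The engine for producing a bad pair is Theorem~\ref{th:nilpotent} applied to proper subgroups, combined with the fact that the hypothesis is inherited by every quotient $G/N$: given a $p$-element and a $q$-element of $G/N$, lift them through Sylow preimages to a $p$-element and a $q$-element of $G$; a non-generation relation descends, and a commuting $G$-conjugate projects to a commuting $G/N$-conjugate. This permits an induction on $|G|$. If $G/N$ is non-nilpotent for some minimal normal $N$, induction makes $G/N$ a Schmidt group and I would pull the structure back, the delicate point being that a proper non-nilpotent subgroup of $G$ may collapse to a nilpotent one modulo $N$, so the pull-back must be supplemented by a direct analysis of the bottom layer $N$. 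If instead every proper quotient is nilpotent, then the nilpotent residual forces a unique minimal normal subgroup $N=\gamma_{\infty}(G)$, an elementary abelian $r$-group, with $G/N$ nilpotent; here $G$ fails to be Schmidt precisely when one of the Schmidt conditions breaks (a non-cyclic Sylow complement, or a proper power of a complement generator acting non-trivially on $N$, i.e.\ an element that is central modulo $N$ but not central in $G$), and in each such failure a bad pair is furnished by a suitable element $u\in N$ together with that offending element $v$. The model case is $C_5\rtimes C_4$, where the generator of $C_5$ together with the central involution of $C_4$ is a bad pair, exactly because the involution lies in the Frattini subgroup of the complement yet inverts $N$.

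The hard part will be this last upgrade, from ``non-commuting up to conjugacy inside a minimal non-nilpotent subgroup'' to ``non-commuting up to conjugacy inside all of $G$''. A witness pair handed over by Theorem~\ref{th:nilpotent} inside a Schmidt subgroup $H<G$ can genuinely become commuting after conjugation by an element of $G\setminus H$: in $\Sym_3\wr C_2$ the obvious pair drawn from one factor is rendered harmless by a conjugate lying in the other factor, so the bad pair cannot be read off from $H$ alone. Controlling this $G$-fusion is the crux: I would track the centralisers $C_G(u)$ of $r$-elements $u\in N$ and show that no $G$-conjugate of the offending $q$-element (equivalently $v$) can be absorbed into such a centraliser, which is where the case-by-case verification will be concentrated and which is what ultimately pins $G$ down to a single Schmidt group.
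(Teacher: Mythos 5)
Your forward direction and your derivation of solvability are both correct (deducing solvability from Theorem~\ref{th:nilpotent} plus the contrapositive of the hypothesis is essentially equivalent to the paper's direct appeal to Proposition~\ref{prop:solvable}). But the heart of the converse --- showing that every proper subgroup of $G$ is nilpotent --- is left as a programme rather than a proof. You yourself flag the crux (``the hard part will be this last upgrade\dots which is where the case-by-case verification will be concentrated'') and never carry it out: you do not actually exhibit the ``bad pair'' in the case where $G$ is solvable, non-nilpotent, not Schmidt, and every proper quotient is nilpotent, nor do you resolve the $G$-fusion obstruction you correctly identify (a pair that is non-commuting up to $H$-conjugacy inside a Schmidt subgroup $H<G$ may well become commuting under $G$-conjugacy). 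So as written there is a genuine gap.

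The missing idea is how the paper sidesteps the fusion problem entirely. Take a minimal normal subgroup $N$ (elementary abelian $p$-group, since $G$ is solvable), a Sylow $p$-subgroup $P$, and $z\in N\cap Z(P)$ of order $p$, and split on $C:=C_G(z)$. If $C<G$, the Fein--Kantor--Schacher theorem \cite[Theorem~1]{FKS} supplies a $q$-element $y$ with $y^G\cap C=\emptyset$; this is a statement about the \emph{whole} $G$-class of $y$, so no conjugate of $y$ can ever commute with $z$, and the hypothesis then forces $G=\la z,y\ra$ outright. That single equation pins down the Schmidt structure ($N=P$ normal, $\la y\ra$ a cyclic Sylow $q$-subgroup acting irreducibly on $N$), after which the nilpotency of each maximal subgroup is a short direct check using $y_1^G=y_1^N$ for $y_1\in\la y\ra$. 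If instead $C=G$, then $z$ is central, induction applies to $G/\la z\ra$, and the maximal subgroups $M$ with $z\notin M$ are handled by noting $G=M\la z\ra$ with $z$ central, so $G$-conjugacy restricted to elements of $M$ is $M$-conjugacy, and \cite[Corollary~E]{DGHP} gives nilpotency of $M$. In short: rather than trying to upgrade local witnesses from Schmidt subgroups to global ones (the step your plan stalls on), the paper manufactures a globally bad pair in one stroke via FKS applied to the centralizer of a well-chosen central element of $P$. Without an ingredient of this kind your induction does not close.
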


This extends \cite[Proposition 11.1 (e)]{Cameron}, where it is shown that for finite non-nilpotent groups $G$, the generating graph is equal to the complement of the nilpotent graph if and only if $G$ is a minimal non-nilpotent group. Recall that a finite group $G$ is called minimal non-nilpotent or a Schmidt group if $G$ is non-nilpotent but all of its proper subgroups are nilpotent.

\begin{corol}\label{cor:invariable-gen-SCC}
Let $G$ be a non-solvable finite group.  Suppose that for all $x,y\in G$  of coprime prime power order, if $x$ and $y$ do not generate $G$, then $\la x,y^g\ra$ is solvable for some $g\in G$.  Then $G/R(G)$ is a finite nonabelian simple group.
\end{corol}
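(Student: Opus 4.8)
The plan is to push everything down to the quotient $\bar G:=G/R(G)$ and then, arguing by contradiction, manufacture a pair of elements violating the hypothesis whenever $\bar G$ fails to be simple.

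\textbf{Reduction to $\bar G=G/R(G)$.} First I would note that solvability of a two‑generated subgroup is invisible to the solvable radical: for $a,b\in G$ the subgroup $\la a,b\ra$ is solvable if and only if its image $\la\bar a,\bar b\ra\le\bar G$ is solvable, since $\la a,b\ra\cap R(G)$ is solvable and $\la a,b\ra/(\la a,b\ra\cap R(G))\cong\la\bar a,\bar b\ra$. Moreover every $p$-element of $\bar G$ is the image of a $p$-element of $G$ (lift through a Sylow $p$-subgroup of its preimage). Hence, if $\bar G$ contained a pair $\bar x,\bar y$ of coprime prime power order with $\la\bar x,\bar y\ra\ne\bar G$ and $\la\bar x,\bar y^{\bar g}\ra$ non-solvable for every $\bar g$, then lifting to a $p$-element $x$ and a $q$-element $y$ of $G$ would give $\la x,y\ra\ne G$ (its image is proper) with $\la x,y^g\ra$ non-solvable for all $g\in G$, contradicting the hypothesis. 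Since $\bar G$ is non-solvable with $R(\bar G)=1$, it therefore suffices to prove: a non-solvable $H$ with $R(H)=1$ that is \emph{not} simple contains such a pair; this then forces $\bar G$ to be simple.

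\textbf{The diagonal construction.} Let $H$ be non-solvable with $R(H)=1$ and not simple. As $R(H)=1$, the socle is a product of nonabelian simple groups, so $C_H(\soc H)=1$ and $H\hookrightarrow\Aut(\soc H)$ with $\soc H=T_1\times\cdots\times T_k$. Fix a minimal normal subgroup $M=T_1\times\cdots\times T_m\cong T^m$ (one $H$-orbit of factors, $T:=T_1$). Assuming the key fact below, pick $a,b\in T$ of coprime prime power order $p^s,q^t$ with $\la a,\alpha(b)\ra$ non-solvable for all $\alpha\in\Aut(T)$, and set
\[
x=(a,\dots,a),\qquad y=(b,\dots,b)\in\Delta_M(T)\le M\le H,
\]
where $\Delta_M(T)\cong T$ is the diagonal. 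Then $\la x,y\ra\le\Delta_M(T)$, a proper subgroup of $H$: if $m\ge 2$ then $|\Delta_M(T)|=|T|<|T|^m=|M|\le|H|$, while if $m=1$ then $M=T_1\unlhd H$ is proper because $H$ is not simple. For any $g\in H$, conjugation preserves the normal subgroup $M$ and acts on each factor through $\Aut(T)$, so $y^g=(\alpha_1(b),\dots,\alpha_m(b))$ with $\alpha_i\in\Aut(T)$ (a factor permutation merely reorders the identical entries), while $x$ is unchanged. Since a subgroup of $T^m$ is solvable precisely when all its coordinate projections are, and the $i$-th projection of $\la x,y^g\ra$ is $\la a,\alpha_i(b)\ra$, non-solvable by the choice of $(a,b)$, we conclude $\la x,y^g\ra$ is non-solvable for every $g\in H$. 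This yields the required pair.

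\textbf{The key fact and the main obstacle.} The crux, and the only place the classification enters, is the statement that for every nonabelian simple $T$ there is a coprime prime power pair $a,b\in T$ with $\la a,\alpha(b)\ra$ non-solvable for all $\alpha\in\Aut(T)$. I would establish this through the clean sufficient condition that there exist distinct primes $p,q$ dividing $|T|$ such that no solvable subgroup of $T$ has order divisible by $pq$: taking $a$ a $p$-element and $b$ a $q$-element then forces $pq\mid|\la a,\alpha(b)\ra|$ (note $\alpha(b)\in T$ has order $q^t$), uniformly in $\alpha$. The hard part is securing such a prime pair — or, for the few groups where it is unavailable, the stated pair directly — for every simple $T$; this is an $\Aut(T)$-robust refinement of the Dolfi--Guralnick--Herzog--Praeger theorem \cite{DGHP}, proved by running through the families in the classification and choosing, for the groups of Lie type and the alternating groups, two primitive prime divisors attached to independent maximal tori whose normalizers are the only solvable subgroups meeting both primes. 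I expect this simple-group analysis to be the main obstacle; the reductions above are elementary once it is in hand.
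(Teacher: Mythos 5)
Your proposal is correct and follows essentially the same route as the paper: reduce to $R(G)=1$, take a minimal normal subgroup $N\cong T^m$, and use diagonal elements $x=(a,\dots,a)$, $y=(b,\dots,b)$ whose conjugates project onto $\la a,\alpha(b)\ra$ in each coordinate. The only point to note is that the ``key fact'' you single out as the main obstacle requires no new classification work and no $\Aut(T)$-robust refinement: it is exactly Theorem B of \cite{DGHP} (for every nonabelian simple $T$ there are distinct primes $p,q$ with $\la x_1,y_1\ra$ nonsolvable for \emph{all} $x_1$ of order $p$ and $y_1$ of order $q$), and since that condition depends only on the orders of elements of $T$ it is automatically invariant under $\Aut(T)$ — which is precisely how the paper concludes.
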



The paper is organized as follows. After the Introduction, we prove  Theorems \ref{th:odd-centralizer} and \ref{th:almost simple} in Section \ref{sec2}.   Theorems \ref{th:solvable}, \ref{th:nilpotent} and \ref{th:direct-factor}  will be proved in Section \ref{sec3}. The corollaries will be proved in Section \ref{sec4}. In the last section, we prove a characterization of the solvable radical.

\section{Real elements in simple groups}\label{sec2}

Let $\Omega$ be a nonempty set of size $n\ge 1$. We denote by $\Sym_n=\textrm{Sym}(\Omega)$ and $\Alt_n=\textrm{Alt}(\Omega)$ the symmetric and alternating groups of degree $n$ acting on the set $\Omega$. Denote by $\mathcal{L}$ be the set consisting of the following simple groups: $\textrm{M}_{12},\textrm{Co}_1$ and $\Alt_n$ with $n\ge 42$ or $n=24.$

In this section, we will prove Theorems \ref{th:odd-centralizer} and \ref{th:almost simple}. We first start with the proof of Theorem \ref{th:odd-centralizer}, which we restate here. 

\begin{thm}\label{th:simple} Let $S$ be a finite nonabelian simple group. Then
\begin{itemize}
\item[$(1)$] If $S\not\in \mathcal{L}$, then $S$ has a real element $x$ of odd prime power order such that $|C_S(x)|$ is odd.
\item[$(2)$] If $S\in\mathcal{L}$, then $S$ has two nontrivial elements $x,y$, where $x$ is an element of odd prime order  and $y$ is a $2$-element such that $y$ normalizes the cyclic group $\la x\ra$ but no conjugate of $y$ in $\Aut(S)$ commutes with $x$. 
 \end{itemize}
 \end{thm}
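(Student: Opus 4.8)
The plan is to run through the classification of finite simple groups, treating the alternating groups, the sporadic groups, and the groups of Lie type separately; in each case the template for conclusion $(1)$ is to produce a real element $x$ of odd prime power order whose class has odd $S$-centralizer, while for conclusion $(2)$ one writes down an explicit pair $(x,y)$ and rules out commuting conjugates by analysing $C_{\Aut(S)}(x)$. I would begin with the alternating groups $S=\Alt_n$, where the whole problem is combinatorial. An element of odd prime power order is a permutation all of whose cycle lengths are powers of a single odd prime $p$, so it is recorded by a cycle type $(p^{a_1},p^{a_2},\dots)$ together with a number of fixed points; writing $m_\ell$ for the number of $\ell$-cycles one has $|C_{\Sym_n}(x)|=\prod_\ell \ell^{m_\ell}m_\ell!$, whose $2$-part comes entirely from the factors $m_\ell!$. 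Hence $C_{\Alt_n}(x)$ is odd exactly when the multiplicities are so small that $\prod_\ell m_\ell!$ contributes at most one factor of $2$ — essentially all parts distinct, or a single repeated part, with at most one fixed point. Reality of $x$ in $\Alt_n$ is then governed by the class-splitting criterion: if a part repeats the $\Sym_n$-class does not split, so $x$ is automatically conjugate to $x^{-1}$ in $\Alt_n$; whereas if all parts are distinct the class splits and $x$ is real if and only if the inverting permutation (the product of the cycle reversals) is even, i.e. $\sum_i (p^{a_i}-1)/2$ is even. Conclusion $(1)$ for $\Alt_n$ thus reduces to the numerical question of whether $n$ is a sum of distinct powers of some fixed odd prime (allowing one repetition and at most one fixed point) meeting this parity constraint, and I would settle it by a finite case analysis with $p=3$ as the main tool, isolating the exceptional degrees, which should turn out to be exactly those appearing in $\mathcal{L}$.

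For those exceptional degrees I would instead establish $(2)$ by an explicit construction. Taking $x$ a product of $p$-cycles ($p$ an odd prime) and $y$ a $2$-element that inverts $x$ or permutes its cycles, one has $C_{\Sym_n}(x)\cong (C_p\wr \Sym_m)\times \Sym_f$; since $p$ is odd, every $2$-element of the wreath factor has trivial base component and merely permutes the $m$ blocks, so as a permutation of the $mp$ moved points each of its cycle lengths occurs with multiplicity divisible by $p$. Choosing $y$ whose cycle type violates this divisibility then forces $y^{\Sym_n}\cap C_{\Sym_n}(x)=\emptyset$, that is, no $\Aut(S)$-conjugate of $y$ commutes with $x$, which is precisely $(2)$.

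The sporadic groups I would dispatch by direct inspection of the known conjugacy-class and character-table data (a short \textsc{GAP} computation): for each group one locates a real class of odd prime power order with odd centralizer, the only failures being $\mathrm{M}_{12}$ and $\mathrm{Co}_1$, for which the pair $(x,y)$ is written down by hand and checked inside $\Aut(S)$. For the groups of Lie type I would invoke the structure of real semisimple classes from \cite{GNT,MT,TZ04,TZ05}: the aim is a real regular semisimple element $x$ of order a power of an odd prime lying in a maximal torus $T$ of odd order, whereupon $C_S(x)$ lies in the image of $T$ and is odd, and reality follows from a Weyl-group element inverting the relevant torus. The cited results supply such tori and elements uniformly outside a bounded list of small groups, which I would finish off individually.

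The hard part will be the alternating groups, where three constraints pull against one another — prime power order forces a single prime, odd centralizer forces tiny multiplicities, and reality is a parity condition that behaves differently on split and non-split classes — and the delicate point is to carry out the bookkeeping precisely enough both to certify the exact exceptional set and, on that set, to produce a pair $(x,y)$ whose non-commuting property persists in $\Aut(S)=\Sym_n$. The Lie-type step is conceptually routine given the quoted theorems but still requires care to secure odd prime power order and odd torus order simultaneously across all families and the small-rank cases.
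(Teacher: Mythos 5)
Your reduction of condition (1) for $\Alt_n$ to a combinatorial question about partitions of $n$ into powers of a single odd prime is sound as far as it goes, but the plan built on it has a genuine gap. You expect the exceptional set of degrees to ``turn out to be exactly those appearing in $\mathcal{L}$'' and propose to certify this by a finite case analysis. That expectation is false: for example $\Alt_{42}$ and $\Alt_{43}$ (both in $\mathcal{L}$) do satisfy (1), since a $41$-cycle $x$ with one or two fixed points has $C_{\Alt_n}(x)$ cyclic of order $41$ and is inverted by a product of $20$ transpositions, an even permutation. More seriously, there is no apparent bound beyond which every $n$ admits a suitable sum of powers of one odd prime meeting the multiplicity and parity constraints --- the single-$p$-cycle construction already fails whenever no prime lies in $[n-3,n]$ with the right residue mod $4$, and prime gaps make this happen infinitely often --- so your ``finite case analysis'' has no termination criterion, and the exact exceptional set is left open even in the paper (see its Remark following the proof of the claims in Remark \ref{rem1}). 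The paper sidesteps all of this: for $n\geq 42$ it makes no attempt at (1) and instead proves (2) uniformly, taking two primes $n/2\leq p<q\leq n$, letting $x$ be a $p$-cycle (so $C_{\Alt_n}(x)=\la x\ra\times\mathrm{Alt}(\Omega_2)$ with $|\Omega_2|=n-p$) and $y$ a $2$-element built from a $(p-1)$-cycle normalizing $\la x\ra$ times a transposition; since $y$ moves $p+1>n-p$ points, no $\Sym_n$-conjugate of $y$ can land in $C_{\Alt_n}(x)$. Your fallback construction for (2) via the wreath-product structure of $C_{\Sym_n}(x)$ is workable in spirit, but you never say for which $n$ you would deploy it, and you would need it (or an implication from (1) to (2), which does hold via a $2$-element inverting $x$) for \emph{every} $n\geq 42$, not just those failing (1).

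The Lie-type template also has a concrete hole. A ``real regular semisimple element in a maximal torus of odd order'' does not exist in several families: in $\PSL_3(q)$ the odd-order Singer torus of order $(q^2+q+1)/d$ contains no nontrivial real elements at all (reality would force $\lambda^{q^i+1}=1$ for an eigenvalue $\lambda$ of order dividing $q^2+q+1$, which is coprime to $q^i+1$), and similar obstructions occur for $\PSL_n^{\pm}(q)$ in various congruence classes. The paper is forced to use regular \emph{unipotent} elements (rational, hence real, by Corollary~6.7 of \cite{TZ04}) for $\PSL_3(q)$ and $\PSU_3(q)$ with $q$ odd and for $\PSL_n(q)$ with $q$ odd in most cases, together with explicit $2$-adic computations of $|C_G(g):Z(G)|$ in the quasisimple cover and a descent lemma (Lemma~\ref{lem:lem1}) to control how the centralizer can grow when passing from $\SL_n(q)$ to the simple quotient. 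Your sketch addresses neither the unipotent alternative nor the quasisimple-to-simple descent, both of which are essential to make the Lie-type case close.
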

 
 We will prove Theorem \ref{th:simple} through a series of lemmas.
 
\begin{lem}\label{lem:non-lie}
Theorem \ref{th:simple} holds for all sporadic simple groups, the Tits group ${}^2{\rm F}_4(2)'$ and the alternating groups of degree at least $5$.
\end{lem}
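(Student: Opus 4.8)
The plan is to handle the three families independently: the sporadic groups and the Tits group reduce to a finite verification, while the alternating groups require an explicit combinatorial analysis of cycle types. Throughout, for $S\notin\mathcal{L}$ I must produce a real element of odd prime power order with odd centralizer (conclusion (1) of Theorem \ref{th:simple}), and for $S\in\mathcal{L}$ a pair $(x,y)$ as in (2).

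For the $26$ sporadic groups and ${}^2{\rm F}_4(2)'$ this is a finite check against the ATLAS (equivalently, the GAP character table library). For each such group other than ${\rm M}_{12}$ and ${\rm Co}_1$ I would exhibit a conjugacy class of odd prime power element order that is real and has odd centralizer order: reality is read off from the power maps (the class coincides with that of its inverse) and the centralizer order is tabulated, giving (1) at once. For ${\rm M}_{12}$ and ${\rm Co}_1$, which by Remark \ref{rem1} admit no such class, I would instead verify (2) directly, choosing an element $x$ of odd prime order and a $2$-element $y\in N_S(\la x\ra)\setminus C_S(\la x\ra)$ and checking that the $\Aut(S)$-class of $y$ misses $C_{\Aut(S)}(x)$; this is again a finite computation using the known structure of $\Aut({\rm M}_{12})={\rm M}_{12}.2$ and $\Aut({\rm Co}_1)={\rm Co}_1$.

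For the alternating groups I would first record three standard facts about conjugacy in $\Alt_n$, valid for an element $\sigma$ of odd order with $m_\ell$ cycles of length $\ell$. First, the $\Sym_n$-class of $\sigma$ splits into two $\Alt_n$-classes precisely when all $m_\ell\le 1$ (distinct cycle lengths), and then $C_{\Alt_n}(\sigma)=C_{\Sym_n}(\sigma)$. Second, $|C_{\Alt_n}(\sigma)|$ is odd if and only if $|C_{\Sym_n}(\sigma)|_2\le 2$, i.e.\ at most one length $\ell$ has $m_\ell\in\{2,3\}$ and all others satisfy $m_\ell\le 1$. Third, if the class does not split then $\sigma$ is automatically real in $\Alt_n$ (it is $\Sym_n$-conjugate to $\sigma^{-1}$, and both lie in the single $\Alt_n$-class), whereas if it splits then $\sigma$ is real if and only if $(n-r)/2$ is even, where $r$ is the number of cycles. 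For $5\le n\le 41$ with $n\ne 24$ I would then, by a short finite case analysis, produce an odd prime $p$ and a cycle type built from powers of $p$ that meets the criterion of the second fact and keeps some length repeated twice or thrice, so that the class is non-split and the element is real by the third fact; padding a single $p$-cycle with two or three fixed points, or taking two or three disjoint $p$-cycles (optionally with an added fixed point), supplies enough freedom to realise every such $n$. The one genuine exception is $n=24$: there the only odd prime power cycle type with odd centralizer is a $23$-cycle plus a fixed point, which splits and fails the reality condition, which is exactly why $\Alt_{24}\in\mathcal{L}$.

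Finally, for $\Alt_{24}$ and every $\Alt_n$ with $n\ge 42$ I would prove (2) by one uniform construction. By Bertrand's postulate pick a prime $p$ with $n/2<p<n$; let $x$ be a $p$-cycle with $n-p$ fixed points and let $r$ be the reversal of that cycle, a product of $(p-1)/2$ transpositions inverting $x$. Taking $y=r$ when $p\equiv 1\pmod 4$ and $y=r\tau$ with $\tau$ a transposition of two fixed points when $p\equiv 3\pmod 4$ (adjusting by further fixed-point transpositions in the boundary cases) yields an even involution $y$ normalising $\la x\ra$ whose number of transpositions exceeds $\lfloor (n-p)/2\rfloor$, the inequality holding because $p>n/2$. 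On the other hand every $2$-element of $C_{\Alt_n}(x)=(\la x\ra\times\Sym_{n-p})\cap\Alt_n$ is supported on the $n-p$ fixed points and so involves at most $\lfloor (n-p)/2\rfloor$ transpositions; hence no $\Sym_n$-conjugate of $y$ lies in $C_{\Alt_n}(x)$, and since $\Aut(\Alt_n)=\Sym_n$ for $n\ge 7$ this is exactly (2). I expect the difficulty to be organisational rather than deep: the substantive points are the centralizer $2$-part criterion, which is what makes (1) attainable for all small $n\ne 24$, and the transposition-count obstruction in (2), which works precisely because choosing $p>n/2$ leaves too few fixed points to house a conjugate of $y$ inside the centralizer; one must also confirm that a prime of the required residue lies in the interval, which follows from Bertrand's postulate together with a short check for the finitely many boundary degrees.
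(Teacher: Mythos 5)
Your overall strategy coincides with the paper's: a finite ATLAS/GAP verification for the sporadic groups and the Tits group, a direct check of conclusion (2) for ${\rm M}_{12}$ and ${\rm Co}_1$, and for the large alternating groups a $2$-element normalizing a long $p$-cycle whose support is too large to fit inside the fixed-point set. Where you genuinely diverge is in the range $5\le n\le 41$: the paper simply runs GAP and records the answer in a table, whereas you derive it from the classical splitting and reality criteria for classes of $\Alt_n$ together with the observation that $|C_{\Alt_n}(\sigma)|$ is odd exactly when at most one cycle length is repeated, and then only twice or thrice. That buys a conceptual explanation of why $n=24$ is the unique exception in this range (no partition of $24$ into powers of a single odd prime meets the multiplicity constraint except $23+1$, which splits and fails the reality test ``$(n-r)/2$ even''), at the cost of a case analysis you only sketch; I checked that your menu of cycle types does cover every admissible $n$ (for $n=35,36$ one needs the type $25+5+5$ or $25+5+5+1$, which your phrase ``built from powers of $p$'' allows even though your explicit list of constructions does not).

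One quantitative claim in your $n\ge 42$ construction is wrong as stated. If $p\equiv 1\pmod 4$ and $n=2p-1$, then $y=r$ consists of exactly $(p-1)/2=\lfloor(n-p)/2\rfloor$ transpositions, so the inequality you assert ``because $p>n/2$'' is in fact an equality; since $(p-1)/2$ is even, an involution with that many transpositions does live inside $\Alt(\Omega_2)\le C_{\Alt_n}(x)$, hence a $\Sym_n$-conjugate of $y$ \emph{does} commute with $x$ and the argument collapses for such $n$ (e.g.\ $n=57$, $p=29$). This failure is not confined to ``finitely many boundary degrees''; it recurs for infinitely many $n$ if $p$ is chosen badly. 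It is repairable exactly as your parenthetical suggests --- append two further fixed-point transpositions, which is possible since $n-p=p-1\ge 4$ --- or by choosing a different prime in the interval. The paper sidesteps the issue by taking $p$ to be the smaller of two primes in $[n/2,n]$ (guaranteeing $n-p\ge2$) and by using for $y$ a $2$-element of order $2^{a}$ that moves $p+1>n-p$ points rather than an involution. With that repair made explicit, your proof is complete.
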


\begin{proof}
Assume that $S$ is a sporadic simple group and $S\not\in \mathcal{L}$. Then the statement can be checked easily  for sporadic simple groups and  the Tits group by using GAP \cite{GAP}. In Table \ref{tab:spor}, we list the triples $(S,x,|C_S(x)|)$, where $x$ is a real element of  odd prime power order and $|C_S(x)|$ is odd. 

Assume $S={\rm M}_{12}$. Then $S$ has only one conjugacy class of elements of order $5$ and so if $x$ is any such element, then $x$ is real. By using \cite{GAP}, we see that $C_G(x)\cong \textrm{D}_{10}$ and we can find an element  $y\in N_S(\la x\ra)\cong $ of order $4$ such that $x^y=x^2$. Since $C_S(x)$ has no element of order $4$, no $\Aut(S)$-conjugate of $y$ could lie in $C_S(x)$.

Assume that $S={\rm Co}_1$. By \cite[Table 5.3l]{GLS}, $S$ has a real element $x$ of order $13$ and a $2$-element $y$ of order $4$ inverting $x$ and $C_S(x)=\la x\ra\times \Alt_4$. Note that $\Aut(S)=S$ and that $C_S(x)$ has no element of order $4$, so no $\Aut(S)$-conjugate of $y$ lies in $C_S(x)$.

Finally, if $5\leq n\leq 41$ and $n\neq 24$, then by using GAP~\cite{GAP}, we can see that $S$ always contains a real element of odd prime power order such that $|C_S(x)|$ is odd. Assume $n=24$. Let $p=19$ and let $x\in S$ be a cycle of length $p.$  Then $S$ has an involution $y$ which is a product of $10$ disjoint transpositions which normalizes the cyclic group $\la x\ra$ but no conjugate of $y$ in $\Sym_n$ can commute with $x$. 

Now assume that $n\ge 42.$  It is well-known that there exist two primes $p$ and $q$ such that $n/2\leq p<q\leq n$ (see, for example, the proof of \cite[Proposition 3.3]{DGHP}). Clearly, both $p,q$ are odd and thus $n-p\ge 2.$ Let $\Omega=\{1,2,\cdots,n\}$, $\Omega_1=\{1,2,\cdots,p\}$ and $\Omega_2=\Omega-\Omega_1.$ Let $x=(1,2,\dots,p)\in\textrm{Sym}(\Omega_1)$ be a cycle of length $p.$ Let $\alpha\in \{1,2,\dots,p-1\}$ be a primitive root modulo $p$ and let $t\in \textrm{Sym}(\Omega_1)$ be a $(p-1)$-cycle such that $x^t=x^\alpha.$ Then $\la x,t\ra$ is the normalizer of the cyclic group $\la x\ra$ in $\textrm{Sym}(\Omega_1).$ Since $|\Omega_2|\ge 2$, we see that $s:=t(p+1,p+2)\in \Alt_n$ and that $C_S(x)=\langle x\ra\times \textrm{Alt}(\Omega_2).$ Write $p-1=2^am$, where $a,m\ge 1$ are integers and $m$ is odd. Let $y=s^m.$ Then $y\in S$ normalizes $\la x\ra$ but does not centralize it. Moreover, $y$ is a product of $m$ $2^a$-cycles and a transposition $(p+1,p+2)$. In particular, $y$ moves $p+1$ points in $\Omega$. Suppose by contradiction that $y^g$ commutes with $x$ for some $g\in \Sym_n=\Aut(\Alt_n)$ (note that $n\ge 42$), then $y^g\in C_S(x)$ and has the same cycle structure as that of $y$. Since $y^g$ is a $2$-elements, $y^g$ must lie in $\textrm{Alt}(\Omega_2)$, which is impossible since $|\Omega_2|=n-p<p+1.$
This completes the proof of the lemma.
\end{proof}

\begin{proof}[\textbf{Proof of the claims in Remark \ref{rem1}}.] The claim for sporadic simple groups ${\rm M}_{12}$, ${\rm Co}_1$ and the alternating group $\Alt_{24}$ can be checked easily using \cite{GAP}.  We next consider the alternating groups of degree
 $n\ge 5$. First assume that $n$ is odd. Then $n-2\ge 3$ is odd and so $n-3=2k$ is even. Let $x=(1,2,\dots,n-2)\in \Alt_n$ be a cycle of length $n-2$. Let $t=(1,2k+1)(2,2k)\dots (k,k+2)$ if $k$ is even; and $t=(1,2k+1)(2,2k)\dots (k,k+2)(2k+2,2k+3)$ if $k$ is odd. Thus $t$ is a product of an even number of transpositions, so $t\in \Alt_n$ and we can check that $x^t=x^{-1}$. So $x$ is a real element of odd order  $n-2$ and $C_S(x)=\la x\rangle$ is of odd order as wanted.
Next assume that $n\ge 6$ is even. Then $n-3=2k+1$ is odd and thus the elements $x$ and $t$ as constructed above are both in $\Alt_n$ and $C_S(x)=\la x\ra\times \la (n-2,n-1,n)\ra\cong C_{n-3}\times C_3$ is of odd order.  
\end{proof}

\begin{rem}
Although some alternating groups $\Alt_n$ with $n\ge 5$ might not contain a real element of prime power order with odd centralizer, it could be true that $A_n$ contains a real element $x$ of odd prime power order and a $2$-element $y$ such that $x^y=x^k\neq x$ for some positive integer $k$ and $x$ commutes with no $\Aut(\Alt_n)$-conjugate of $y.$
\end{rem}

{\scriptsize
\begin{table}[h]
\renewcommand\thetable{A}
\[
\begin{array}{llr|llr} \hline
S & x & |C_S(x)| & S & x& |C_S(x)| \\ \hline
{\rm M}_{11} & 5A & 5  & {\rm Th} & 19A & 19 \\
{\rm M}_{22} & 5A & 5  & {\rm Fi}_{22} & 13A & 13 \\
{\rm M}_{23} & 5A & 15  & {\rm Fi}_{23}  & 17A & 17 \\ 
{\rm M}_{24} & 11A & 11  & {\rm Fi}_{24}' & 29A & 29 \\
{\rm HS} & 7A & 7 &  {\rm O'N} & 19A & 19 \\
{\rm J}_{2} & 7A & 7  & {\rm J}_3 & 17A & 17 \\
{\rm Co}_{1} & 35A & 35  & {\rm Ru } & 29A & 29 \\
{\rm Co}_2 & 11A & 11 &  {\rm J}_4& 43A &43  \\
{\rm Co}_3 & 9B & 81 &  {\rm Ly}&67A & 67  \\
{\rm McL} & 5B & 25 &  {\rm J}_1 & 19A & 19 \\
{\rm Suz} & 13A & 13 & {\rm B} & 27A &27  \\
{\rm He} & 17A & 17 &  {\rm M}& 41A & 41 \\
{\rm HN} & 25A & 25  &{}^2{\rm F}_4(2)' &13A &13 \\\hline
\end{array}
\]
\caption{The triples $(S,x,|C_S(x)|)$ in Lemma \ref{lem:non-lie} with $S$ sporadic or the Tits group and $S\not\cong {\rm M}_{12}$}
\label{tab:spor}
\end{table}}

{\scriptsize
\begin{table}[h]
\renewcommand\thetable{B}
\[
\begin{array}{lcr|lcr|lcr} \hline
n & x & |C_{\Alt_n}(x)| &n &x &  |C_{\Alt_n}(x)|&n &x &  |C_{\Alt_n}(x)| \\ \hline

5 & 5A& 5&6&5A&5&7&5A&5\\

8 & 5A& 15&9&7A&7&10&7A&21\\

11 & 9A& 9&12&9A&27&13&13A&13\\

14 & 13A& 13&15&13A&13&16&13A&39\\

17 & 17A& 17&18&17A&17&19&17A&17\\

20 & 17A& 51&21&19A&19&22&19A&57\\

23 & 11B& 121&25&23A&23&26&25A&25\\

27 & 25A& 25&28&25A&75&29&29A&29\\

30 & 29A& 29&31&29A&29&32&29A&87\\

33 & 31A& 31&34&31A&93&35&25C&625\\

36 & 25C& 625&37&37A&37&38&37A&37\\

39 & 37A& 37&40&37A&111&41&41A&41\\
\hline
\end{array}
\]
\caption{The triples $(S,x,|C_S(x)|)$ in Lemma \ref{lem:non-lie} with $S$ an alternating $\Alt_n$ and $5\leq n\leq 41, n\neq 24$}
\label{tab:alt}
\end{table}}

We next consider the finite simple groups of Lie type. Recall that for a positive integer $n,$ we denote by $\Phi_n(x)$, the $n$th cyclotomic polynomial. For a positive integer $n\ge 2$ and a prime power $q$, the primitive prime divisor (or ppd ) of $q^n-1$ is a prime $\ell$ such that $\ell$ divides $q^n-1$ but $\ell$ does not divide $q^k-1$ for any integers $1\leq k<n.$ The Zsigmondy theorem \cite{Zsig} states that the primitive prime divisor of $q^n-1$ always exists except for when $n=2$ and $q=2^{f}-1$ or $n=6$ and $q=2$.
We will write $\ell_n=\ell_n(q)$ for a primitive prime divisor of $q^n-1$, if it exists. Since $n$ is the order of $q$ modulo $\ell_n(q)$, we have $\ell_n(q)\ge n+1.$

We will need the following observations.

\begin{lem}\label{lem:lem1}
Let $L$ be a finite group with center $Z=Z(L)$. Let $S=L/Z$.  Let $g\in L$ be a real element of odd order of $L$. Then

\begin{itemize}
\item[\rm (1)] $gZ$ is a real element of odd order of $S$.

\item[\rm (2)] If $|C_L(g):Z|$ is odd and either $\gcd(|g|,|Z|)=1$ or $|Z| $ is odd, then $|C_{S}(gZ)|$ is odd.
\end{itemize}
\end{lem}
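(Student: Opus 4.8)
The plan is to dispose of part (1) quickly and then focus all the work on part (2), where the real content lies. For (1), I would write realness of $g$ as $g^h=g^{-1}$ for some $h\in L$ and simply project to $S=L/Z$: this gives $(gZ)^{hZ}=g^{-1}Z=(gZ)^{-1}$, so $gZ$ is real in $S$. Since the order of $gZ$ divides $|g|$, which is odd, $gZ$ has odd order. No case analysis is needed here, and in particular it does not matter whether $gZ$ is trivial.

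For part (2), I would first identify the preimage of the centralizer. Writing $D$ for the full preimage of $C_S(gZ)$ under $L\to S$, the relation $xZ\in C_S(gZ)$ is equivalent to $[x,g]\in Z$, so $D=\{x\in L: [x,g]\in Z\}$ and $C_S(gZ)=D/Z$. Because $Z$ is central we have the chain $Z\leq C_L(g)\leq D$, whence $|C_S(gZ)|=|D:Z|=|D:C_L(g)|\cdot|C_L(g):Z|$. The second factor is odd by hypothesis, so the whole problem reduces to proving that $|D:C_L(g)|$ is odd, i.e.\ to controlling how much larger $C_S(gZ)$ is than the image $C_L(g)/Z$ of the centralizer upstairs.

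The key device for this is the commutator map $\phi\colon D\to Z$, $\phi(x)=[x,g]$. Using the identity $[xy,g]=[x,g]^y[y,g]$ together with the centrality of $[x,g]$, I would check that $\phi$ is a group homomorphism and that its kernel is precisely $C_L(g)$, so that $D/C_L(g)\cong \phi(D)\leq Z$. It then suffices to show that $\phi(D)$ has odd order, and this is exactly where the hypothesis on $Z$ enters. If $|Z|$ is odd, then $\phi(D)\leq Z$ has odd order automatically. If instead $\gcd(|g|,|Z|)=1$, I would show $\phi$ is trivial: conjugating $x$ repeatedly by $g$ and using $\phi(x)\in Z$ gives $x^{g^k}=x\,\phi(x)^k$, so from $x=x^{g^{|g|}}=x\,\phi(x)^{|g|}$ we get $\phi(x)^{|g|}=1$; since $\phi(x)\in Z$ its order also divides $|Z|$, and coprimality forces $\phi(x)=1$, i.e.\ $D=C_L(g)$. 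Either way $|D:C_L(g)|=|\phi(D)|$ is odd, and combined with the odd index $|C_L(g):Z|$ this yields that $|C_S(gZ)|$ is odd.

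The main obstacle, and the reason the naive argument ``$C_S(gZ)\supseteq C_L(g)/Z$ is odd'' does not by itself finish the proof, is precisely that the centralizer downstairs can be strictly larger than the image of the centralizer upstairs; the commutator homomorphism $\phi$ is what measures this discrepancy, and the stated hypotheses on $Z$ are what force $\phi(D)$ to be odd. I would also note that the computation $\phi(x)^{|g|}=1$, together with $|g|$ being odd, already shows $\phi(D)$ has odd order with no hypothesis on $Z$ at all, so the displayed assumptions are convenient but more than strictly necessary.
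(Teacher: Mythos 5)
Your proof is correct and follows essentially the same route as the paper: the paper also passes to the full preimage $D$ of $C_S(gZ)$, uses the commutator map $d\mapsto g^{-1}dgd^{-1}$ as a homomorphism $D\to Z$ with kernel $C_L(g)$, and factors $|C_S(gZ)|=|\mathrm{Im}|\cdot|C_L(g):Z|$. Your closing observation that $\phi(x)^{|g|}=1$ already forces $|\mathrm{Im}(\phi)|$ to be odd, making the extra hypotheses on $Z$ superfluous, is a valid small sharpening not noted in the paper.
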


\begin{proof}
(1) is obvious. For (2), as in the proof of Lemma 4.3 in \cite{GNT}, let $D$ be the full inverse image of $C_S(gZ)$ in $L$ and let $f:D\rightarrow Z$ be a map defined by $f(d)=g^{-1}dgd^{-1}$ for $d\in D$. Then $f$ is a group homomorphism with kernel $C_L(g)$. Hence $$|C_S(gZ)|=|D:Z|=|\textrm{Im}(f)|\cdot |C_L(g):Z|.$$ Thus $|C_S(gZ)|$ is odd if $|Z|$ is odd. On the other hand, if $\gcd(|g|,|Z|)=1$, then $|\textrm{Im}(f)|=1$ and so $|C_S(gZ)|=|C_L(x):Z|$ is odd. The proof is now complete.
\end{proof}

Recall that a finite group $L$ is quasisimple if $L$ is perfect, that is, $G'=[L,L]=L$, where $L'$ denotes the derived subgroup of $L$, and $L/Z(L)$ is a nonabelian simple group. 

\begin{rem} We record here some remarks which might be helpful for the remaining proofs.

\begin{itemize}
\item[\rm (1)] Let $L$ be a quasisimple group and let $S=L/Z(L)$. If $L$ has a real element $g$ such that $|C_L(g)|$ is odd, then $gZ$ is a real element of $S$ and $|C_S(gZ)|$ is odd by the previous lemma. In this case, $|Z(L)|$ is odd.
\item[\rm (2)] In the case when $|C_L(g)|$ is even for any real element $g\in G$, we find a real element $g\in L$ such that $(|g|,|Z|)=1$ and $|C_L(g):Z(L)|$ is odd, then $|C_S(gZ)|$ is odd and $gZ$ is real in $S$.
\end{itemize}
\end{rem}

We now prove Theorem  \ref{th:simple} for finite simple groups of Lie type.
\begin{prop}\label{prop:Lie type}
Let $S$ be a finite simple group of Lie type. Then $S$ has a nontrivial real element of  prime power order whose centralizer has odd order.
\end{prop}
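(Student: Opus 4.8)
```latex
The plan is to prove Proposition \ref{prop:Lie type} by a case division over the standard families of finite simple groups of Lie type, in each case exhibiting a concrete real element of prime power order with an odd centralizer. The unifying strategy is to locate the element inside a suitable maximal torus $T$ of the ambient (quasisimple) algebraic group: tori whose order is a product of cyclotomic factors $\Phi_n(q)$ are the natural source both of elements with prescribed prime power order (via primitive prime divisors $\ell_n(q)$) and of elements with controllable centralizers. Concretely, I would pick $n$ so that a primitive prime divisor $\ell=\ell_n(q)$ exists (Zsigmondy), take $x$ to be an $\ell$-element lying in a torus of order divisible by $\ell$, and then argue that $C_S(x)$ is essentially this torus, hence of odd order once $\ell$ and the relevant torus order are odd. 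Realness of such an element will typically come from the action of the Weyl group: one shows that $-1$, or an appropriate reflection, lies in $W$ and inverts the torus (or the relevant cyclic factor), so that $x$ is conjugate to $x^{-1}$ by an element realizing that Weyl-group element.

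For the concrete choices I would organize by Lie rank and type. For the linear and unitary groups $\PSL_n(q)$ and $\PSU_n(q)$, I would use Singer-type cycles: an element of order dividing $\ell_n(q)$ (respectively $\ell_{2n}(q)$ in the unitary case) coming from a field-extension torus $\FF_{q^n}^\times$, whose centralizer in $\GL_n$ is that cyclic torus; passing to $S$ via Lemma \ref{lem:lem1} and the Remark on quasisimple groups handles the center and keeps the centralizer odd, provided $\ell$ is odd, which it is since $\ell_n(q)\ge n+1$ for $n\ge 2$. For the symplectic, orthogonal and the various twisted and exceptional families I would similarly select an $\ell_n(q)$-element in a maximal torus $\Phi_n$-torus, and invoke that the centralizer of such a regular semisimple element is the torus itself; for the small-rank and exceptional types I would cite the tables and explicit computations in \cite{GNT, MT, TZ04, TZ05} giving real semisimple elements with odd self-centralizing tori. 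The realness in all these cases reduces to the standard fact that the longest element $w_0$ of $W$ acts as $-1$ on the root lattice for most types, and for the types where $w_0\neq -1$ (namely $A_n$, $D_{2k+1}$, $E_6$) one uses instead the graph automorphism or a careful choice of torus so that inversion is still Weyl-realizable.

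The main obstacle I anticipate is twofold. First, the realness argument is genuinely type-sensitive: for $\PSL_n(q)$ the element $-1 \in W = \Sym_n$ does not literally exist, so a Singer cycle need not be inverted by $W$, and I would have to invoke the known classification of real classes in these groups (as in \cite{TZ05}) rather than a uniform $w_0 = -1$ argument; the unitary case is more favorable because the underlying form already twists by inversion. Second, keeping the centralizer \emph{odd} — not merely cyclic — requires that the chosen cyclotomic factor $\Phi_n(q)$ be odd and that no stray involutions centralize $x$; near the boundary (e.g. the Zsigmondy exceptions $n=6,q=2$ and $n=2,q=2^f-1$, or small-rank groups over small fields) the generic torus may fail to exist or may acquire $2$-elements in its normalizer that centralize $x$. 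I would dispose of these finitely many small exceptional groups by direct computation in \cite{GAP} or by the cited tables, exactly as Lemma \ref{lem:non-lie} handled the sporadic cases. Crucially, the conclusion of the proposition asserts no exceptions for Lie type, so part of the work is verifying that every would-be exceptional small group nonetheless admits such an element by an ad hoc choice.
```
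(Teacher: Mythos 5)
Your overall strategy---primitive prime divisor elements in cyclotomic tori, centralizer equal to the torus, realness from the Weyl group, small cases by computer---does match what the paper does for the exceptional groups, for the classical groups in even characteristic, and for large parts of the unitary, symplectic and orthogonal analysis. But there is a genuine gap in odd characteristic, concentrated on the linear groups, and your proposed fallback does not close it. For $S=\PSL_n(q)$ with $n$ odd and $q$ odd, \emph{no} semisimple element can satisfy the conclusion: a real element of odd order in $\SL_n(q)$ has inversion-stable eigenvalue multiset, and since $n$ is odd this forces the eigenvalue $1$ to occur, so $C_{\GL_n(q)}(g)$ contains a $\GL_1(q)$-factor of even order $q-1$; this even part survives passage to $\PSL_n(q)$ because the center has odd order $\gcd(n,q-1)$. (A real $2$-element is also useless, since any nontrivial $2$-element lies in its own centralizer.) For $n$ even and $q$ odd the Singer torus has order $(q^n-1)/(q-1)=\prod_{d\mid n,\,d>1}\Phi_d(q)$, which contains the even factor $\Phi_2(q)=q+1$, so ``choosing a torus with $\Phi_m(q)$ odd'' is not available either: the centralizer is the whole torus, not a single cyclotomic factor, and your remark that oddness of $\ell$ keeps the centralizer odd conflates the order of the element with the order of its centralizer. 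Invoking the classification of real classes cannot rescue a semisimple construction here, because the obstruction is structural, not a matter of identifying which classes are real.

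The missing idea is to switch to \emph{unipotent} elements in odd characteristic: a regular unipotent element has prime power order (a power of the defining characteristic $p>2$) and centralizer of order $q^{n-1}$ in the adjoint group, which is odd, and its realness is supplied not by a Weyl-group argument but by the rationality results of Tiep--Zalesskii (\cite[Corollary 6.7, Theorem 1.9]{TZ04}), whose hypotheses ($q\equiv 1\ (\text{mod}\ 4)$, parity of $n/d$, etc.) dictate the case subdivision. This is exactly how the paper handles $\PSL_n(q)$ and $\PSU_n(q)$ for odd $q$, as well as several symplectic and orthogonal subcases; in the residual cases where regular unipotents are not known to be real, the paper uses other real unipotent or semisimple elements from \cite{GNT} together with explicit $2$-adic computations of $|C_G(g):Z(G)|$ via \cite[Lemma A.4]{BG}. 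Without the unipotent alternative your plan cannot be completed for the linear groups in odd characteristic, and these are not finitely many small exceptions that GAP can absorb.
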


We start with the exceptional groups of Lie type first. The proofs depend on \cite{MT,TZ05} and also \cite{GNT}. We write $\textrm{E}^+_6(q)$ for $\textrm{E}_6(q)$ and $\textrm{E}^-_6(q)$ for ${}^2\textrm{E}_6(q)$.  

\begin{lem}\label{lem:exceptional}
Proposition \ref{prop:Lie type} holds for all finite simple exceptional groups of Lie type.
\end{lem}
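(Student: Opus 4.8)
The plan is to exhibit, in each exceptional type $S$, a nontrivial element $x$ of prime power order that is simultaneously real and has odd centralizer, using the theory of maximal tori together with the real-class data of \cite{MT,TZ05,GNT}. The basic device is a \emph{Zsigmondy torus}: choose $n$ so that $\ell_n=\ell_n(q)$ is a primitive prime divisor dividing $|S|$ only through the factor $\Phi_n(q)$; then an element $x$ of order $\ell_n$ is regular semisimple, lies in a maximal torus $T$ with $\Phi_n(q)\mid|T|$, and satisfies $C_S(x)=T$. Thus $|C_S(x)|$ is odd as soon as $|T|$ is odd, and $x$ is real as soon as $N_S(T)/T$ contains an element inverting the $\Phi_n(q)$-part of $T$. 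Two elementary parity facts drive the choices. First, $\Phi_n(q)$ is odd for every $q$ provided $n$ is not a power of $2$: indeed $\Phi_n(q)\equiv\Phi_n(0)=1\pmod 2$ for $q$ even, while $\Phi_n(q)\equiv\Phi_n(1)\pmod 2$ for $q$ odd, and $\Phi_n(1)$ is either $1$ or an odd prime when $n$ is not a $2$-power. Second, the relative Weyl group of a Zsigmondy torus is cyclic, generated by an element acting on the $\Phi_n$-part as multiplication by a primitive $n$-th root of unity, so it realises inversion exactly when $n$ is even.

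First I would dispose of the types for which the device applies verbatim. For $G_2(q)$ (take $n=6$), for $F_4(q)$ and ${}^3D_4(q)$ ($n=12$), and for $E_8(q)$ ($n=30$), the cyclic torus of order $\Phi_n(q)$ is a maximal torus of odd order, $n$ is even, and the element $x$ of order $\ell_n$ is as required. For $E_7(q)$ with $q$ even every factor $\Phi_d(q)$ is odd, so any maximal torus has odd order; I would take $x$ of order $\ell_{18}$ (or $\ell_{14}$) in such a torus and use $-1\in W(E_7)$ to get realness. For the twisted groups ${}^2E_6(q)$ ($n=18$, where the relative Weyl group has order $18$ and its element of order $9$ already acts as $-1$ on the cyclic torus $\Phi_{18}(q)$), as well as ${}^2F_4(q)$, ${}^2B_2(q)$ and ${}^2G_2(q)$, I would use the Zsigmondy torus attached to the twisted cyclotomic factor---for the Suzuki and Ree groups the cyclic tori of orders $q\pm\sqrt{2q}+1$, respectively $q\pm\sqrt{3q}+1$, which are odd---and read off realness and the centralizer order from \cite{MT,TZ05,GNT}.

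The two genuinely delicate cases are $E_6(q)$ and $E_7(q)$ with $q$ odd. For $E_6(q)$ the obstacle is that $-1\notin W(E_6)$: the natural $\Phi_9$-torus element of order $\ell_9$ is regular with odd centralizer, but its relative Weyl group has odd order $9$ and cannot invert it, so that element is \emph{not} real. To repair this I would either pass to a maximal torus whose order is an odd product of even-index cyclotomic factors---for instance one of order $\Phi_3(q)\Phi_{12}(q)=(q^2+q+1)(q^4-q^2+1)$---and take $x$ of order $\ell_{12}$, whose relative Weyl group does contain an element inverting the $\Phi_{12}$-part while acting arbitrarily on the $\Phi_3$-part; or, failing a clean torus computation, use a regular unipotent element $u$, whose centralizer in the simple group has odd order (a power of $q$ times $\gcd(3,q-1)$) and whose realness is recorded in \cite{MT,TZ05}. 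For $E_7(q)$ with $q$ odd the obstacle is structural: since the rank $7$ is odd, every maximal torus carries a factor $\Phi_1(q)=q-1$ or $\Phi_2(q)=q+1$ and hence has even order, so no semisimple element can have odd centralizer. Here I would instead use a regular unipotent element $u$ of $p$-power order, whose centralizer in $E_7(q)$ has the odd order $q^7$, and establish $u\sim u^{-1}$ from the classification of real unipotent classes in \cite{MT,TZ05}.

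The remaining work is bookkeeping. In each case above I would verify that the chosen $x$ is genuinely regular, so that $C_S(x)$ is exactly the stated torus with no even factor intruding through the component group or an isogeny, reading the maximal-tori and semisimple-centralizer tables of \cite{MT,TZ05} when this is not immediate; and I would settle the finitely many pairs $(S,q)$ for which the relevant $\ell_n(q)$ fails to exist (the Zsigmondy exceptions), together with the Tits group ${}^2F_4(2)'$, by the direct computation already carried out in Lemma~\ref{lem:non-lie}. The main obstacle throughout is realness rather than the centralizer order: for $E_6$ (no global $-1$) and for the unipotent elements forced on us in $E_6$ and $E_7$ at odd $q$, confirming $x\sim x^{-1}$ is precisely the point that must be extracted from \cite{MT,TZ05}.
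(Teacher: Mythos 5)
Your overall strategy (ppd semisimple elements in odd-order Zsigmondy tori, realness from \cite{TZ05}, small/exceptional $q$ by direct computation) is the paper's strategy, and your treatments of ${}^2{\rm B}_2$, ${}^2{\rm G}_2$, ${}^2{\rm F}_4$, ${\rm G}_2$, ${}^3{\rm D}_4$, ${\rm F}_4$, ${\rm E}_8$ and ${\rm E}_6^{\pm}$ (via the torus of order $\Phi_{12}(q)(q^2+\epsilon q+1)$, which is exactly what the paper gets by embedding ${\rm F}_4(q)$) are essentially correct. The problem is your handling of ${\rm E}_7(q)$ with $q$ odd. Your claim that ``every maximal torus has even order, so no semisimple element can have odd centralizer'' conflates the simply connected group with the simple group. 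All maximal tori of ${\rm E}_7(q)_{\rm sc}$ do have even order when $q$ is odd, but the simple group is the quotient by the centre $Z$ of order $d=\gcd(2,q-1)=2$, and the relevant quantity is $|T|/d$. The paper takes the two cyclic maximal tori $T_1,T_2$ of orders $q^7-1$ and $q^7+1$: since $(q^7-1)/2$ and $(q^7+1)/2$ are consecutive integers, one of them is odd, and a ppd element $s_i\in T_i$ of order $\ell_7$ or $\ell_{14}$ is semisimple (hence real, as $-1\in W({\rm E}_7)$ and $|N(T_i)/T_i|=14$), has order coprime to $|Z|$, and so by Lemma~\ref{lem:lem1} has centralizer of odd order $|T_i|/2$ in the simple group. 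So the semisimple route does work, and it is the route the paper takes.

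Your proposed substitute --- a regular unipotent element of ${\rm E}_7(q)$ --- leaves a genuine gap that you yourself flag: its realness. This is not a formality. As the paper's own treatment of the classical groups shows, realness of regular unipotent elements in groups with a centre of even order is sensitive to congruence conditions on $q$ (the results of \cite{TZ04,TZ05} invoked there typically require $q\equiv 1\pmod 4$ or $q$ a square), and the regular unipotent class of the algebraic group splits into two $G(q)$-classes when $q$ is odd, so $u\sim u^{-1}$ cannot simply be ``read off.'' The case $q\equiv 3\pmod 4$ --- precisely the case your semisimple argument abandons --- is the one where this is most likely to fail, and it is exactly the case the paper circumvents by staying with semisimple elements throughout the exceptional types. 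The same caution applies to your unipotent fallback for ${\rm E}_6(q)$, though there your primary (torus) argument already suffices.
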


\begin{proof} Let $G$ be a finite quasisimple exceptional group of Lie type defined over a finite field  of size $q=p^f,$ where $p$ is a prime and $f\ge 1$ is an integer. Let $S=G/Z(G)$ and $d=|Z(G)|.$ Except for the groups ${\rm E}_6(q),{\rm E}_7(q)$ and ${}^2{\rm E}_6(q)$ whose center has order $d=(3,q-1),(2,q-1)$ and $(3,q+1),$ respectively, the centers of the other groups are trivial and so $G=S$ in these cases. We consider each family separately.

(i)  $G={}^2{\rm B}_2(q)$ with $q=2^{2f+1},f\ge 1.$ By Lemma 2.3 in \cite{MT}, $G$ contains a semisimple  element $s_1$ of order $p_1$ which is a ppd of $p^{4(2f+1)}-1$ and $|C_G(s_1)|=2^{2f+1}\pm 2^{f+1}+1$ is odd. Now, by Proposition 3.1 in  \cite{TZ05}, $s_1$ is real. Thus we can take $x=s_1.$

(ii)  $G={}^2{\rm G}_2(q)$ with $q=3^{2f+1},f\ge 1.$ By Lemma 2.3 in \cite{MT}, $G$ contains a semisimple  element $s_1$ of order $p_1$ which is a ppd of $3^{6(2f+1)}-1$ and $|C_G(s_1)|=3^{2f+1}\pm 3^{f+1}+1$ is odd. Now, by Proposition 3.1 in  \cite{TZ05}, $s_1$ is real. 

(iii)  $G={}^2{\rm F}_4(q)$ with $q=2^{2f+1},f\ge 1.$ By Lemma 2.3 in \cite{MT}, $G$ contains a semisimple  element $s_2$ of order $p_2$, a ppd of $2^{6(2f+1)}-1$ and $|C_G(s_2)|=q^2-q+1$ is odd. Now, by Proposition 3.1 in  \cite{TZ05}, $s_2$ is real.

(iv)  $G={}^3{\rm D}_4(q)$ with $q=p^{f},f\ge 1.$ Assume first that $q>2.$ By Lemma 2.3 in \cite{MT}, $G$ contains a semisimple element  $s_1$ of order $p_1$, a ppd of $p^{6f}-1$ and $|C_G(s_1)|=q^4-q^2+1$ is odd. Now, by Proposition 3.1 in  \cite{TZ05}, $s_1$ is real. For $q=2$, ${}^3{\rm D}_4(2)$ has a real element $x$ in the class $13A$ which is self-centralizing, that is,  $C_G(x)=\la x\ra.$

(v)  $G={\rm G}_2(q)$ with $q=p^{f},f\ge 1.$ Assume first that $q\ge 5.$ By Lemma 2.3 in \cite{MT}, $G$ contains a semisimple  element $s_1$ of order $p_1$, a ppd of $p^{6f}-1$ and $|C_G(s_1)|=\Phi_6(q)$. Now, $s_1$ is real by \cite{TZ05}. Since $\Phi_6(q)=q^2-q+1$ is odd, we can take $x=s_1.$

Note that ${\rm G}_2(2)\cong \PSU_3(3)\cdot 2$ is not simple but ${\rm G}_2(2)'\cong \PSU_3(3)$ is simple. By using GAP \cite{GAP}, ${\rm G}_2(2)'$ has a real element $x$ in the class $3B$  with $|C_G(x)|=9.$ For $q\in \{3,4\}$, ${\rm G}_2(q)$ has a real element of order $13$ which is self-centralizing.

(vi)  $G={\rm F}_4(q)$ or ${\rm E}_8(q)$ with $q=p^{f},f\ge 1.$  By Lemma 2.3 in \cite{MT}, $G$ contains a semisimple  element $s_1$ of order $p_1$, a ppd of $p^{m_1f}-1$,  with $|C_G(s_1)|=\Phi_{m_1}(q)$ with $m_1=12$ or $30$, respectively. By  \cite{TZ05} $s_1$ is real and clearly both $\Phi_{12}(q)=q^4-q^2+1$ and $\Phi_{30}(q)=q^8+q^7-q^5-q^4-q^3+q+1$ are odd.

(vii)  $G={\rm E}_7(q)$  with $q=p^{f},f\ge 1.$ Let $d=\gcd(2,q-1)$. Then $G$ has two cyclic maximal tori $T_1$ and $T_2$ of  order $(q^7-1)$ and $(q^7+1),$ respectively. Let $\ell_1$ and $\ell_2$ be ppd of $q^7-1$ and $q^7+1$ and let $s_i\in T_i$ be elements of order $\ell_i.$ As in the proof of Theorem 3.4 in \cite{BPS}, we have $C_G(s_i)=T_i$ and $|N_G(T_i)/T_i|=14$. It follows that both $s_i$ are real elements in $G$.  This also follows from Proposition 3.1 in \cite{TZ05} as $s_i's$ are semisimple in $G$. Since $|T_1|/d$ and $|T_2|/d$ are coprime, one of them must be odd. Moreover, both $\ell_i's$ are odd, so they are coprime to $d.$ The result now follows from Lemma \ref{lem:lem1}.

(viii)  $G={\rm E}^\epsilon_6(q)$ with $q=p^{f},f\ge 1$ and $\epsilon=\pm.$ Let $d=\gcd(3,q-\epsilon1)$. As in the proof of Theorem 4.1 in \cite{GNT}, by embedding $H={\rm F}_4(q)$ in $G$, $G$ has a semisimple real element $s_\epsilon\in  H$ of order $\ell$ with $|C_H(s_\epsilon)|=\Phi_{12}(q)$ and $|C_G(s_\epsilon)|=\Phi_{12}(q)(q^2+\epsilon q+1),$ where $\ell$ is a ppd of $q^{12}-1.$
Since $\Phi_{12}(q)=q^4-q^2+1$, $|C_G(s_\epsilon)|$ is odd. Since $d$ is also odd, the result follows from Lemma \ref{lem:lem1}. The proof is complete.
\end{proof}

If $m$  is a positive integer and $r$ is a prime,  we write $(m)_r$ for the $r$-part of $m$, that is, the largest power of $r$ dividing $m$.

\begin{lem}\label{lem:L}
Proposition \ref{prop:Lie type}  holds for $\LL_n(q)$, where $q=p^f$ and $n\ge 2.$
\end{lem}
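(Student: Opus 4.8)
The plan is to lift to the quasisimple group $G=\SL_n(q)$ with centre $Z=Z(G)$ of order $d=\gcd(n,q-1)$, produce in $G$ a real element of prime power order whose image in $S=\LL_n(q)=G/Z$ has odd centralizer, and then invoke Lemma \ref{lem:lem1}. Two elementary facts will be used repeatedly: an element whose (prime) order is coprime to $q-1$ automatically has determinant $1$, so it already lies in $\SL_n(q)$; and, writing $(m)_2$ for the $2$-part, $(q^m-1)_2=(q-1)_2$ for $m$ odd whereas $(q^m-1)_2=(q-1)_2(q+1)_2(m)_2/2$ for $m$ even. The argument splits on the parity of $q$.

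First I would treat $q$ even. Then $q^m-1$ is odd for every $m$, so every maximal torus of $\GL_n(q)$ has odd order and $d$ is odd. If $n$ is even I take $x$ of order $\ell=\ell_n(q)$ generating a Singer torus; its eigenvalues form one Frobenius orbit, and since $q^{n/2}\equiv -1\pmod{\ell}$ (as $\ell\nmid q^{n/2}-1$) this orbit is inversion‑closed, so $x$ is real in $\GL_n(q)$, hence in $\SL_n(q)$ after correcting the conjugator by the determinant‑surjective Singer torus. If $n$ is odd I take $x=1\oplus x'$ with $x'$ a real $\ell_{n-1}(q)$‑element on an $(n-1)$‑space, using the torus of type $(1,n-1)$. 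In either case $C_G(x)$ is the relevant torus, of odd order, so $(|C_S(\bar x)|)_2=1$ by Lemma \ref{lem:lem1}.

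For $q$ odd my main tool is the regular unipotent element $u\in G$: one computes $|C_G(u)|=d\,q^{n-1}$, whence $(|C_S(\bar u)|)_2=1$ by Lemma \ref{lem:lem1}, and $u$ has order a power of $p$. Computing an intertwiner between $u$ and $u^{-1}$ (an upper‑triangular matrix whose diagonal entries have constant successive ratio $-1$) shows its determinant lies in the coset $(-1)^{\binom n2}(\FF_q^*)^n$; hence $\bar u$ is real exactly when $(-1)^{\binom n2}\in(\FF_q^*)^n$, i.e.\ whenever $n$ is odd, whenever $4\mid n$, and when $n\equiv 2\pmod 4$ with $q\equiv 1\pmod 4$. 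The single residual case is $n\equiv 2\pmod 4$ and $q\equiv 3\pmod 4$. There I replace $u$ by a regular semisimple element $s$ in the maximal torus of type $(n/2,n/2)$, whose two irreducible blocks of odd dimension $k=n/2$ are interchanged by inversion; choosing $s$ of order a primitive prime divisor $\ell_k(q)$ makes $s$ real, with $|C_G(s)|=(q^k-1)^2/(q-1)$. Since $k$ is odd and $q\equiv 3\pmod 4$ we have $(q^k-1)_2=(q-1)_2=2$ and $(d)_2=2$, so the $2$‑part of $|C_G(s)|$ is exactly cancelled on passing to $S$, giving $(|C_S(\bar s)|)_2=1$ via Lemma \ref{lem:lem1}.

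The step I expect to be delicate is precisely the realness bookkeeping in the $q$ odd, $n$ even regime: because $-1$ does not lie in the Weyl group of type $A_{n-1}$ for $n\ge 3$, any real element must have its torus blocks pair up, and each pairing pushes an extra factor of $2$ into the centralizer order, competing with the demand that the centralizer be odd. The dichotomy above—regular unipotent versus the $(n/2,n/2)$‑torus—is engineered so that in every case the surviving $2$‑part is either absent or exactly matched by $(d)_2$ and removed through Lemma \ref{lem:lem1}. Finally, the finitely many pairs $(n,q)$ for which the required primitive prime divisor does not exist (the Zsigmondy exceptions, notably $\LL_6(2)$ and $\LL_7(2)$, together with the smallest ranks) are to be verified directly, exactly as in Lemma \ref{lem:non-lie}.
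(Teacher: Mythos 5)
Your proof is correct and follows the same skeleton as the paper's: lift to $G=\SL_n(q)$, exhibit a real element of prime power order with controlled centralizer, descend via Lemma \ref{lem:lem1}, and check the Zsigmondy exceptions and small cases by hand. The case division also agrees --- in particular your residual case $n\equiv 2\pmod 4$, $q\equiv 3\pmod 4$ is exactly the paper's leftover case ($n$ even, $q\equiv 3\pmod 4$, $n/d$ odd forces $n\equiv 2\pmod 4$). The differences are in the ingredients. For even $q$ you certify reality of the ppd element via the inversion-closed Frobenius orbit ($q^{n/2}\equiv -1\pmod{\ell}$) plus surjectivity of the determinant on the Singer torus, where the paper instead embeds the element in $\Sp_n(q)\leq\SL_n(q)$ and quotes \cite{MT,BPS}; both give the same centralizer orders. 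For odd $q$ you derive the reality criterion $(-1)^{\binom{n}{2}}\in(\FF_q^*)^n$ for the regular unipotent by an explicit intertwiner computation, recovering precisely the conditions the paper imports from \cite[Corollary 6.7]{TZ04}; this makes the argument more self-contained. The most substantive divergence is the residual case: the paper uses a non-regular real \emph{unipotent} element from \cite[Lemma 4.4]{GNT} with $|C_G(g)|=q^n(q-1)$, whereas you use a regular \emph{semisimple} element in the torus of type $(n/2,n/2)$ with $|C_G(s)|=(q^{n/2}-1)^2/(q-1)$; your $2$-part bookkeeping there ($(q^{n/2}-1)_2=(q-1)_2=2=(d)_2$, so the quotient by $Z$ is odd) is correct, and your verification that the block-swapping conjugator can be chosen in $\SL_n(q)$ goes through because the norm map on each $\FF_{q^{n/2}}^*$ factor is surjective. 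Do make sure the degenerate instance $n=2$, $k=1$ of your semisimple construction is stated carefully (one needs an odd prime divisor of $q-1$, which exists for $q>3$ when $q\equiv 3\pmod 4$), but this is absorbed into your small-case check just as in the paper's separate treatment of $n=2$.
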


\begin{proof} Let $G=\SL_n(q)$ and $d=|Z(G)|=\gcd(n,q-1)$. Let $G^*=\PGL_n(q).$ Then $S=G/Z(G)\cong [G^*,G^*]\cong  \LL_n(q).$

(1) Assume  $n=2$ and $q=p^f\ge 4.$ Since $\PSL_2(4)\cong \PSL_2(5)\cong\Alt_5$ and $\PSL_2(9)\cong\Alt_6$, we assume $9\neq q\ge 7$. 

Assume first that $p=2$. Then $q\ge 8$. In this case, $G=S$ has a subgroup isomorphic to the dihedral groups $D_{2(q-1)}$ and so $S$ has a real element $x$ of odd order $2^f-1 $ and  $|C_S(x)|=2^f-1$ is odd.

Assume that $q$ is odd. Using GAP \cite{GAP}, we may assume that $q\ge 13$. It is well known that $S$ has two  maximal subgroups isomorphic to $D_{q\pm 1}$ and the Borel subgroup $B=N_S(P)$ of order $q(q-1)/2$, where $P$ is a Sylow $p$-subgroup of $G$. Note that $B$ is a Frobenius group with Frobenius kernel $P$. If $q\equiv 1$ mod $4$, then $(q-1)/2$ is even and thus $B$ has a real element $x$ of order $p$ and $C_S(x)=P$ is of odd order and we are done. Next, assume that $q\equiv 3$ mod $4$. Then $(q-1)/2$ is odd. Hence the  dihedral group $D_{q-1}$  contains a real element $x$ of order $(q-1)/2$ and $|C_S(x)|=(q-1)/2$ is odd.

(2) Assume that $n=3$. Since $d=\gcd(3,q-1)$ is odd, it suffices to find a real element $g\in G$ with $|C_G(g)|$ being odd by Lemma \ref{lem:lem1}. Note that $x=gZ\in S$ is a real element of odd order and $|C_S(x)|$ is odd. By using \cite{GAP}, we may assume that $q\ge 13$.
Note that $\PSL_3(2)\cong\PSL_2(7)$.

Assume  $q$ is even. As in the proof Lemma 4.4 in \cite{GNT}, $G=\SL_3(q)$ contains a real element $g$ of order $q+1$ with $|C_G(g)|=q^2-1$. Clearly, $q^2-1$ is odd and the result follows.

Assume  $q$ is odd. By Corollary 6.7 in \cite{TZ04}, the regular unipotent element $u$ of $G$ is rational and hence it is real with $|C_G(u)|=dq^2$ being odd. We can take $g=u$.

(3) Assume that  $q=2^f$ is even and $n\ge 4$. Assume that $n\ge 4$ is even and $(n,q)\neq (6,2), (7,2)$. Then $G=\SL_n(q)$ contains an element $g$ of order $\ell$, where $\ell$ is a ppd of $2^{nf}-1$. As $g$ can be embedded into  $\Sp_n(q)\leq \SL_n(q)$, $g$ is real. Now by Lemma 2.4 in \cite{MT} and \cite{BPS}, $|C_G(g)|=(q^n-1)/(q-1)$  is odd.

Assume that $n\ge 5$ is odd. Then $n-1$ is even and thus as in the previous case, $\SL_n(q)$ contains a real element $g$ of order $\ell$, where $\ell$ is a ppd of $2^{(n-1)f}-1$ and by Lemma 2.4 \cite{MT}, $|C_G(g)|=q^{n-1}-1$ is odd.
The cases $\LL_6(2)$ and $\LL_7(2)$ can be checked using \cite{GAP}.

(4) Assume $q$ is odd and that $n\ge 4$. Assume first that $n\ge 5$ is odd, or $n/d$ is even or $q\equiv 1$ mod $4$. Then  by Corollary 6.7 in \cite{TZ04}, a regular unipotent element $u$ is real (indeed, it is rational). Note that $x=uZ(G)\in S\unlhd G^*$ is a regular unipotent element in $G^*$ and by \cite[Proposition 5.1.9]{Carter},  $|C_{G^*}(x)|=q^{n-1}$ and hence $|C_S(x)|$  is odd.

We are left with the cases $n$ is even, $q\equiv 3$ mod $4$ and $n/d$ is odd.
As in the proof of  \cite[Lemma 4.4]{GNT}, $G=\SL_n(q)$ has a real unipotent element $g$  with $|C_G(g)|=q^n(q-1)$ and $\gcd(|g|,|Z(G)|)=1.$   By Lemma \ref{lem:lem1}, it suffices to show that $|C_G(g):Z(G)|=|C_G(g)|/d$ is odd.
Since $q\equiv 3$ mod $4$, we have $q-1\equiv 2$ mod $4$. As $n$ is even and $d=\gcd(n,q-1),$ we deduce that $d\equiv 2$ mod $4$. It follows that  $(q-1)/d$ is odd and so $|C_G(g):Z(G)|$ is odd as wanted.
The proof is now complete.

\end{proof}

We next consider the  unitary groups. 
\begin{lem}\label{lem:U}
Proposition \ref{prop:Lie type}  holds for $\UU_n(q)$, where $q=p^f$ and $n\ge 3.$
\end{lem}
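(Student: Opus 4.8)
The plan is to follow the template established in Lemma \ref{lem:L} for the linear case, working in the quasisimple group $G=\SU_n(q)$ with center $Z(G)$ of order $d=\gcd(n,q+1)$, so that $S=G/Z(G)\cong\UU_n(q)$. By Lemma \ref{lem:lem1}, it suffices to produce a real element $g\in G$ of prime power order such that $|C_G(g):Z(G)|$ is odd, together with a coprimality condition $\gcd(|g|,|Z(G)|)=1$ or $|Z(G)|$ odd, to push the conclusion down to $S$. The unitary groups are the ``twisted'' analogue of the linear groups, so the arithmetic roles of $q-1$ and $q^k-1$ should be replaced throughout by $q+1$ and factors of $q^k-(-1)^k$; the relevant tori and centralizer orders will be governed by $\Phi_n(-q)$-type quantities, and the primitive prime divisors $\ell_{2k}$ of $q^{2k}-1$ (equivalently ppd's of $q^k+1$) will play the role that $\ell_n(q)$ played before.

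First I would split into small cases $n=3,4$ (and perhaps a few more) to be checked directly in GAP \cite{GAP}, as was done in the linear case, disposing of degenerate isomorphisms and small-field coincidences. For the generic cases I would treat the parity of $p$ separately. When $q$ is even, I expect to use a semisimple element lying in an embedded symplectic or orthogonal subgroup (so that reality is automatic, since in $\Sp$ every element is real), of order a primitive prime divisor $\ell$ of $q^{n}+1$ or $q^{n-1}+1$ according to the parity of $n$, with centralizer a cyclic torus of odd order; reality then follows from Proposition 3.1 in \cite{TZ05} for semisimple elements, and the centralizer order will be an odd cyclotomic value. When $q$ is odd, I would lean on the regular unipotent element, which is rational (hence real) by Corollary 6.7 in \cite{TZ04} and whose centralizer in $\PGU_n(q)$ has order $q^{n-1}$ by \cite[Proposition 5.1.9]{Carter}, an odd number; the coprimality $\gcd(|u|,d)=1$ holds because $u$ is a $p$-element and $d\mid q+1$ is prime to $p$. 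The residual subcase will mirror the linear situation: when the regular unipotent argument does not immediately descend to $S$ (because $n/d$ is odd while $n$ is even and a suitable congruence on $q$ holds), I would instead exhibit a real unipotent element $g$ with $|C_G(g)|=q^n(q+1)$ and argue that $(q+1)/d$ is odd by tracking the $2$-adic valuations, using $d=\gcd(n,q+1)$.

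The main obstacle I anticipate is the $2$-part bookkeeping needed to guarantee that $|C_G(g):Z(G)|=|C_G(g)|/d$ is odd after passing to the simple quotient, since $d=\gcd(n,q+1)$ can be even (unlike the exceptional cases where $d$ was always odd). This is exactly the delicate point that required the congruence analysis $q\equiv 3\pmod 4\Rightarrow q-1\equiv 2\pmod 4$ in Lemma \ref{lem:L}; the unitary analogue replaces this with a congruence on $q+1$, and one must carefully match the parity of $n/d$ against the $2$-adic valuation of $q+1$ so that the centralizer order loses all its $2$-part upon dividing by $d$. A secondary subtlety is ensuring the chosen element genuinely has \emph{prime power} order: the semisimple elements should be taken of prime order $\ell$ (a single primitive prime divisor), and the unipotent elements are automatically $p$-elements, so prime-power order is built in, but I would verify that the existence of the relevant primitive prime divisor is not obstructed by a Zsigmondy exception for the small degrees and fields, handling any such exceptions by the explicit GAP computations mentioned above.
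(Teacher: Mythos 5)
Your overall template is the right one and matches the paper's: work in $G=\SU_n(q)$, apply Lemma \ref{lem:lem1} to descend to $S$, handle $n=3$ and the Zsigmondy/small-field exceptions by computer, use semisimple elements of ppd order sitting in an embedded $\Sp_m(q)$ (hence real) with odd toral centralizer when $q$ is even, and use the rational regular unipotent with centralizer of order $q^{n-1}$ in $\PGU_n(q)$ when $q$ is odd and Corollary 6.7 of \cite{TZ04} applies. The paper does exactly this, with the centralizer orders for the even-$q$ case taken from \cite[Lemma 3.5]{DNT} according to $n \bmod 4$.

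However, your plan for the residual subcase ($n$ even, $q\equiv 3 \pmod 4$, $n/d$ odd) has a genuine gap. You propose to mimic the linear case by exhibiting a real unipotent element with $|C_G(g)|=q^n(q+1)$ and arguing that $(q+1)/d$ is odd. This fails: since $q\equiv 3\pmod 4$ we have $4\mid q+1$, so $(q+1)_2\ge 4$, while $(d)_2=(n)_2$ (as $n/d$ is odd) can be strictly smaller than $(q+1)_2$. Concretely, for $\SU_6(3)$ one has $d=\gcd(6,4)=2$ and $(q+1)/d=2$, and for $\SU_4(7)$ one has $d=4$ and $(q+1)/d=2$; in both cases the proposed index $|C_G(g):Z(G)|$ is even and the descent via Lemma \ref{lem:lem1} breaks down. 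The asymmetry with the linear case is exactly that $q\equiv 3\pmod 4$ makes $(q-1)_2=2$ small (which is what saved the $\SL_n$ argument) but makes $(q+1)_2$ large. The paper instead handles this subcase with a \emph{semisimple} element of order a ppd of $p^{nf}-1$ (for $n\equiv 0\pmod 4$) or of $p^{nf/2}-1$ (for $n\equiv 2\pmod 4$), lying in $\Sp_n(q)<G$ and hence real, with $|C_G(g)|=(q^n-1)/(q+1)$; the identity $(q^n-1)_2=(q^2-1)_2(n/2)_2=(q+1)_2(n)_2$ from \cite[Lemma A.4]{BG} combined with $(d(q+1))_2=(n)_2(q+1)_2$ then shows $(q^n-1)/(d(q+1))$ is odd. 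You would need to replace your residual step with this (or an equivalent) semisimple construction for the proof to go through.
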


\begin{proof} Let  $d=|Z(\SU_n(q))|=\gcd(n,q+1)$, $S=\SU_n(q)/Z(\SU_n(q))$ and $G^*=\PGU_n(q)$ so $[G^*,G^*]=S$. 

(1) Assume  $n=3$. Then $d=\gcd(3,q+1) $ is odd and $q\ge 3$. Assume first assume $q\ge 16$. 
Assume $q=2^f$ is even.
Let $\ell$ be a ppd of $2^{2f}-1$ so $\ell\mid 2^f+1$. Embed $\SL_2(q)\cong \SU_2(q)$ into $\SU_3(q)$  and let $g$ be a real element of order $\ell$ in $\SL_2(q)$. Then $|C_G(g)|=(q+1)^2$ (see, for example, \cite{SF}). Since $(q+1)^2$ is odd, the lemma follows by Lemma \ref{lem:lem1}. 
Assume that $q$ is odd. By Corollary 6.7 in \cite{TZ04}, the regular unipotent element $u$ of $G$ is rational and hence $u$ is real with $|C_G(u)|=dq^2.$ Since $dq^2$ is odd, the result follows. The cases with $3\leq q<16$ can be found in Table \ref{tab:Lie-type}.

(2) Assume that  $q=2^f$ is even,  $n\ge 4$ and $(n,q)\neq (6,2),(7,2).$ Let $m=2\lfloor n/2\rfloor$. Then there is a ppd $\ell>2$ of $2^{mf}-1$ and a real element $g\in \Sp_m(q)<G$ of order $\ell$ with 

\[|C_G(g)|=\begin{cases}
(q^n-1)/(q+1),\quad\quad\, n\equiv 0\, (\text{mod}\,4)\\
(q^{n/2}+1)^2/(q+1),\quad   n\equiv 2\, (\text{mod}\, 4)\\
q^{n-1}-1,\qquad\quad\quad\quad\,\,\,   n\equiv 1\, (\text{mod} \,4)\\
(q^{(n-1)/2}+1)^2,\qquad\quad   n\equiv 3\, (\text{mod} \,4).
\end{cases}\]
For the proof, see \cite[Lemma 3.5]{DNT}. Clearly $|C_G(g)|$ is odd in these cases.
The cases $\UU_6(2)$ and $\UU_7(2)$  are listed in Table \ref{tab:Lie-type}.

(3) Assume $q$ is odd and $n\ge 4$. If $n\ge 5$ is odd or $n/d$ is even or $q\equiv 1$ mod $4$, then by arguing as in part (4) in  the proof of the previous lemma, $G^*$ has a regular unipotent element $u\in S$ which is rational and hence is real with $|C_{G^*}(u)|=q^{n-1}$  and so $|C_S(u)|$ is odd.


Thus we assume that $n\ge 4$ is even, $q\equiv 3$ (mod $4$) and $n/d$ is odd. 

Assume that $n\equiv 0$ (mod 4). Let $\ell$ be a ppd of $p^{nf}-1$. Since $n$ is even, we can embed $\Sp_n(q)$ in $G$ and so there exists an element $g\in \Sp_n(q)$ of order $\ell.$ By \cite[Proposition 3.1]{TZ04}, $g$ is real in $\Sp_n(q)<G$. By Lemma 3.5 \cite{DNT}, we have  $|C_G(g)|=(q^n-1)/(q+1)$.

By Lemma \ref{lem:lem1}, it suffices to show that $|C_G(g):Z(G)|=|C_G(g)|/d$ is odd since $|g|=\ell>nf\ge n$ so $\gcd(|g|,|Z(G)|)=1.$ Recall that $d=\gcd(n,q+1).$ Since $n/d$ is odd, we have $(n)_2=(d)_2$. Moreover, as $q\equiv 3$ mod 4, $(q-1)_2=2$.

 By \cite[Lemma A.4]{BG}, we have $$(q^n-1)_2=(q^2-1)_2(n/2)_2=2(q+1)_2(n/2)_2=(q+1)_2(n)_2$$ and \[(d(q+1))_2=(d)_2(q+1)_2=(q+1)_2(n)_2.\]
Thus $(q^n-1)/(d(q+1))$ is odd.

Next, assume that $n\equiv 2$ mod 4. Write $n=2m$. Then $m\ge 3$ is odd.  Let $\ell$ be a ppd of $p^{mf}-1$ and let $g\in H:=\Sp_{2m}(q)<G$ be an element of order $\ell$. Note that $g$ lies in a unique maximal torus  $T$ of order $(q^n-1)/(q+1)$ of  $G$ by considering the order formulas for the maximal tori of $G.$   
It follows that $|C_G(g)|=(q^n-1)/(q+1)$ and $g$ is real in $G$ (see also \cite[Table B.4]{BG}). The same argument as in the previous case implies that $(q^n-1)/(d(q+1))$ is odd. The lemma is proved.
\end{proof}

\begin{lem}\label{lem:BC}
Proposition \ref{prop:Lie type}  holds for $\PSp_{2n}(q)$\emph{ ($n\ge 2$)} and $\Omega_{2n+1}(q)$\emph{ ($n\ge 3$)}, where $q=p^f$.
\end{lem}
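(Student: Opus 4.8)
The plan is to treat the two families $\PSp_{2n}(q)$ (type $C_n$) and $\Omega_{2n+1}(q)$ (type $B_n$) in parallel, exploiting two sources of suitable elements: a primitive prime divisor element sitting in a cyclic maximal torus of order $q^n+1$, and a regular unipotent element whose centralizer in the adjoint group has odd order $q^n$. Realness will be inexpensive throughout: semisimple elements of symplectic and orthogonal groups are real by \cite[Proposition 3.1]{TZ04}, while the regular unipotent element is rational, hence real, exactly in the range furnished by \cite[Corollary 6.7]{TZ04}. Since the relevant center has order $d=\gcd(2,q-1)\in\{1,2\}$, the passage from the (quasi)simple group to its simple quotient $S$ will be governed by Lemma \ref{lem:lem1}. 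Throughout, the finitely many small pairs $(n,q)$ for which a primitive prime divisor fails to exist, or for which the group is not simple (for instance $\PSp_4(2)\cong\Sym_6$), will be verified directly in GAP \cite{GAP}, consistently with Table \ref{tab:Lie-type}.

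First I would dispose of the case $q$ even. Here $\Omega_{2n+1}(q)\cong\Sp_{2n}(q)$, so only the symplectic group requires attention, and $Z=1$ gives $S=\Sp_{2n}(q)$. I would choose $g$ of order $\ell_{2n}$, a primitive prime divisor of $q^{2n}-1$; then $\ell_{2n}\mid q^n+1$, the element $g$ lies in the cyclic maximal torus $T$ of order $q^n+1$, and $C_S(g)=T$ has odd order $q^n+1$ (cf. \cite[Lemma 2.4]{MT} and \cite[Lemma 3.5]{DNT}). As a semisimple symplectic element $g$ is real, so $x=g$ is as required.

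For $q$ odd I would apply Lemma \ref{lem:lem1} with $|Z|=2$. Taking $g$ of order $\ell_{2n}$ as above, the oddness of $\ell_{2n}$ gives $\gcd(|g|,|Z|)=1$, whence $|C_S(gZ)|=|C_G(g):Z|=(q^n+1)/2$. A $2$-part computation, e.g.\ via \cite[Lemma A.4]{BG}, shows $(q^n+1)/2$ is odd precisely when $q\equiv 1\pmod 4$, or when $q\equiv 3\pmod 4$ and $n$ is even. In the remaining range $q\equiv 3\pmod 4$ with $n$ odd, I would instead use a regular unipotent element $u$, which is real (indeed rational) by \cite[Corollary 6.7]{TZ04} exactly in this range; by Carter's formula \cite[Proposition 5.1.9]{Carter} its centralizer in the adjoint group has order $q^n$, so $|C_S(\bar u)|$ is odd. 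The orthogonal family $\Omega_{2n+1}(q)$ with $q$ odd runs along identical lines: the same $q^n+1$ torus supplies a real semisimple ppd element with centralizer of the same shape after factoring out the central involution, and the same regular unipotent element covers the complementary range, the centralizer orders being read off from \cite[Lemma 3.5]{DNT} and the rationality input from \cite{TZ04}.

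The main obstacle will be the odd-$q$ analysis, in two respects. First, I must pin down the exact centralizer order of each chosen element in the simple group $S$ after quotienting by the central involution, which requires knowing that the ppd element's centralizer is exactly the torus $T$ (not a larger subgroup) and controlling the index of $S$ in the adjoint group $G^*$ for the unipotent case. Second, and more delicate, I must confirm that realness of the regular unipotent element holds precisely on the complementary range $q\equiv 3\pmod 4$, $n$ odd, where the torus argument fails; this hinges on the spinor-norm conditions encoded in \cite[Corollary 6.7]{TZ04} for classes in types $B_n$ and $C_n$. Once these two inputs are secured, the parity bookkeeping for $(q^n\pm 1)/2$ and the GAP verification of the excluded small groups are routine.
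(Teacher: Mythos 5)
Your overall strategy --- a primitive prime divisor element in the cyclic torus of order $q^n+1$, supplemented by a regular unipotent element, with Lemma \ref{lem:lem1} managing the central involution --- is the same as the paper's, and your treatment of $q$ even, of $n$ even, and of $q\equiv 1\pmod 4$ is sound. However, the one case in which your argument actually needs the regular unipotent element, namely $q\equiv 3\pmod 4$ with $n$ odd, is precisely the case in which that element fails to be real. For $q$ odd the regular unipotent class of $\Sp_{2n}(q)$ consists of a single Jordan block of size $2n$; inversion multiplies the attached one\mbox{-}dimensional quadratic form by $(-1)^{2n-1}=-1$, so the class is real if and only if $-1$ is a square in $\FF_q$, i.e.\ if and only if $q\equiv 1\pmod 4$, independently of the parity of $n$. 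The smallest instance already exhibits the failure: a transvection in $\Sp_2(q)=\SL_2(q)$ with $q\equiv 3\pmod 4$ is not conjugate to its inverse. Your appeal to \cite[Corollary 6.7]{TZ04} is also misplaced, since that statement concerns $\SL_n$ and $\SU_n$; the relevant criterion for types $B_n$ and $C_n$ (\cite[Theorem 1.9]{TZ04}) gives rationality of unipotent elements under $q\equiv 1\pmod 4$, the opposite of the range you need. So the proposal has a genuine gap at $q\equiv 3\pmod 4$, $n$ odd.

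The paper closes that case with a different element: it takes a semisimple $g_2$ of order $\ell_2$, a primitive prime divisor of $q^n-1$, which by \cite[Lemma 5.5]{MNO} satisfies $|C_G(g_2)|=q^n-1$. Semisimple elements of $\Sp_{2n}(q)$ and $\Spin_{2n+1}(q)$ are always real by \cite[Proposition 3.1]{TZ05}, and for $n$ odd with $q\equiv 3\pmod 4$ one has $(q^n-1)_2=(q-1)_2=2$, so $|C_G(g_2):Z(G)|=(q^n-1)/2$ is odd, as required. Substituting this element for your regular unipotent in the remaining range repairs the argument; the rest of your proposal goes through.
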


\begin{proof} Let $G=\Sp_{2n}(q)$ or $\Spin_{2n+1}(q)$ and let $d=|Z(G)|=\gcd(2,q-1)$. Then $S=G/Z(G)$. Note that if $S=\Omega_{2n+1}(q)$, then $q$ is odd and if $(n,q)=(2,2)$, then $S=\PSp_4(2)'$. By \cite[Proposition 3.1]{TZ05}, any semisimple element of $G$ is real and by \cite[Theorem 1.9]{TZ04} (or \cite[Lemma 2.2]{TZ05}) every unipotent element of $G$ is real if $q$ is a square and $p>2.$ 
By using GAP \cite{GAP}, we may assume that $S$ it not $\PSp_6(2)$ nor $\PSp_4(2)'$. Then a ppd $\ell_1$ of $p^{2nf}-1$ exists.
 By \cite[Lemma 2.4]{TZ04}, $G$ has a semisimple  element $g_1$ of order $\ell_1$  and $|C_G(g)|=q^n+1$. Note that $g_1$ is real and $\gcd(|g_1|,|Z(G)|)=1$. 

\medskip
(1) Assume $n=2$. We claim that $|C_G(g_1):Z(G)|=(q^2+1)/d$ is odd.

If $q$ is even, then $q=2^f\ge 4$ and $d=1$. So $G=S$  and $|C_G(g_1)|=2^{2f}+1$ is odd. 
Assume  that $q$ is odd. Then $d=2$.  By \cite[Lemma A.4]{BG}, $(q^2+1)_2=2$ and thus $|C_G(g_1):Z(G)|=(q^2+1)/d$ is odd. The claim above is proved  and the lemma follows from Lemma \ref{lem:lem1}.

(2) Assume $n\ge 3$ and $q$ is even. Then $S=\PSp_{2n}(q)$. In this case, $|C_G(g_1)|=q^n+1$ is odd and we are done.

(3) Assume $n\ge 3$ and $q$ is odd. Then $d=2$. If $n$ is even, then $(q^n+1)_2=2$ and so $|C_G(g_1):Z(G)|$ is odd and the lemma follows as in  (1). So we may assume that $n\ge 3$ is odd. If $q\equiv 1$ mod 4, then a regular unipotent element $u\in G$ is real  and $|C_G(u):Z(G)|$ is a power of $q$ by \cite[Proposition 5.1.9]{Carter}. Hence we can assume that $q\equiv 3$ (mod $4$).

Let $\ell_2$ be a ppd of $p^{nf}-1$. By \cite[Lemma 5.5]{MNO}, $G$ contains an element $g_2$ of order $\ell_2$ with $|C_G(g_2)|=q^n-1$. Now $g_2$ is semisimple and hence is real with $\gcd(|g_2|,|Z(G)|)=1$. By \cite[Lemma A.4]{BG}, $(q^n-1)_2=(q-1)_2=2$ and thus $|C_G(g_2):Z(G)|=(q^n-1)/2$ is odd. 
\end{proof}

\begin{lem}\label{lem:D}
Proposition \ref{prop:Lie type}  holds for $\PP\Omega^\epsilon_{2n}(q)$, where $n\ge 4,\epsilon=\pm$ and $q=p^f$.
\end{lem}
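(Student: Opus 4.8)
The plan is to follow the same strategy used in the previous lemmas for the other classical families, namely to exhibit a semisimple real element of prime-power order in $G=\mathrm{Spin}^\epsilon_{2n}(q)$ whose centralizer has order coprime to $2$ after factoring out the center, and then invoke Lemma \ref{lem:lem1}. Recall that $d=|Z(G)|$ divides $\gcd(4,q^n-\epsilon 1)$ and that $S=G/Z(G)\cong \PP\Omega^\epsilon_{2n}(q)$. By \cite[Proposition 3.1]{TZ05} every semisimple element of $G$ is real, so the real part of the statement is automatic; the entire difficulty is arranging that the relevant centralizer is odd (modulo the center). As in Lemmas \ref{lem:U} and \ref{lem:BC}, I would first dispose of the small cases $S\cong\PP\Omega^\pm_8(2)$, $\PP\Omega^-_8(3)$ and the like directly with GAP \cite{GAP}, recording the triples in Table \ref{tab:Lie-type}, so that from then on a suitable primitive prime divisor $\ell_m(q)$ exists by Zsigmondy.

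First I would treat $\epsilon=-$, which is the easier sign. Here $G=\mathrm{Spin}^-_{2n}(q)$ contains a cyclic maximal torus of order $(q^n+1)/d$, coming from the embedding of a field extension of degree $2n$; pick $g$ of order a ppd $\ell_{2n}(q)$ of $q^{2n}-1$ inside it. Then $|C_G(g)|=q^n+1$ and $g$ is semisimple of prime order, so $\gcd(|g|,|Z(G)|)=1$ since $\ell_{2n}(q)\ge 2n+1>4\ge d$. If $q$ is even then $d=1$ and $q^n+1$ is already odd. If $q$ is odd, I would use \cite[Lemma A.4]{BG} exactly as in Lemma \ref{lem:BC}: $(q^n+1)_2=(q^n+1)_2$, and comparing $2$-parts of $q^n+1$ with $d$ shows $(q^n+1)/d$ is odd (when $n$ is even this is immediate since $(q^n+1)_2=2=d$; when $n$ is odd one checks $(q^n+1)_2=(q+1)_2$ and matches it against $d$). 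The conclusion then follows from Lemma \ref{lem:lem1}.

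For $\epsilon=+$ the element of order $\ell_{2n}(q)$ is no longer available in $G$, so I would instead use a primitive prime divisor $\ell_{2(n-1)}(q)$ of $q^{2(n-1)}-1$. The group $\mathrm{Spin}^+_{2n}(q)$ contains a maximal torus of order $(q^{n-1}+1)(q-\epsilon 1)/d$ (equivalently an element lying in a $\mathrm{GU}_{n-1}$- or $\Omega^-_{2(n-1)}$-type subgroup), and one extracts $g$ of order $\ell_{2(n-1)}(q)$ with centralizer of order $(q^{n-1}+1)(q-1)$ up to the factor $d$. The same $2$-part bookkeeping via \cite[Lemma A.4]{BG} then reduces the claim to showing $(q^{n-1}+1)(q-1)/d$ is odd; I expect to split into the subcases $q\equiv 1$ and $q\equiv 3\pmod 4$ and, where a semisimple torus does not immediately give an odd centralizer, fall back on a regular unipotent element, which by \cite[Theorem 1.9]{TZ04} is real when $q$ is a square and whose centralizer is a $p$-power times a small factor by \cite[Proposition 5.1.9]{Carter}.

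The main obstacle I anticipate is precisely the $2$-part arithmetic in the $D_n^+$ case: because $Z(\mathrm{Spin}^+_{2n}(q))$ can be as large as a Klein four-group for $q$ odd with $4\mid q^n-1$, the reduction modulo the center is more delicate than in the symplectic and odd-orthogonal cases, and a single choice of element may fail to have odd centralizer for every congruence class of $q$ and parity of $n$. I would therefore keep two candidate elements in reserve—one semisimple of order $\ell_{2(n-1)}(q)$ and one (regular) unipotent—and argue that in each residue case at least one of them yields $|C_S(\cdot)|$ odd, exactly paralleling the dichotomy handled in part (4) of Lemma \ref{lem:L} and in Lemma \ref{lem:U}. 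Finalizing which element works in which case, and confirming the centralizer orders against \cite{BG} and \cite{DNT}, is the computational heart of the proof.
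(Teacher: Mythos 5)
Your overall strategy (ppd semisimple elements, a regular-unipotent fallback, Lemma \ref{lem:lem1}, and $2$-part bookkeeping via \cite[Lemma A.4]{BG}) is the same as the paper's, but two of your concrete element choices fail. First, for $\epsilon=-$ with $n$ odd and $q\equiv 3\pmod 4$, your element of order $\ell_{2n}(q)$ has centralizer of order $q^n+1$ with $(q^n+1)_2=(q+1)_2$, which can be $8$ or larger (e.g.\ $q=7$), while $d=\gcd(4,q^n+1)=4$; so $(q^n+1)/d$ is even and the ``matching'' you assert does not go through. The paper instead takes (following \cite{GNT}) a semisimple real element with centralizer of order $(q^{n-1}+1)(q-1)$, whose $2$-part is exactly $4=d$ because $n-1$ is even. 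Second, for $\epsilon=+$ your proposed torus of order $(q^{n-1}+1)(q-1)$ is a \emph{minus}-type torus (an $\Omega^-_{2n-2}\times\Omega^+_2$ decomposition has overall type $-$), so it does not sit in $\Spin^+_{2n}(q)$; the paper uses a torus of order $(q^{n-1}-1)(q-1)$ for $n$ even (whose $2$-part is $4=d$ when $q\equiv 3\pmod 4$), and for $n$ odd, $\epsilon=+$, $q\equiv 3\pmod 4$ it abandons semisimple elements entirely and uses a regular unipotent, which is rational by \cite[Theorem 1.9]{TZ04} precisely because $n(q-1)/2$ is odd in that case --- not because $q$ is a square, as you suggest.

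A further point: your opening claim that every semisimple element of $\Spin^\epsilon_{2n}(q)$ is real by \cite[Proposition 3.1]{TZ05} is false for $\epsilon=+$ with $n$ odd, since $-1$ does not lie in the Weyl group of type $D_n$ for odd $n$; this is exactly why the paper cites specific real elements from \cite{GNT} here rather than invoking the blanket statement available in types $B_n$ and $C_n$. You correctly anticipate that the center of $\Spin^+_{2n}(q)$ makes the reduction delicate and that a dichotomy between a semisimple and a unipotent candidate is needed, but the proposal as written does not pin down which element works in which congruence class, and the candidates you name are the wrong ones in the cases above.
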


\begin{proof} Let $G=\Spin_{2n}^\epsilon(q)$ and let $d=|Z(G)|$. Then $d=\gcd(2,q-1)^2$ if $\epsilon=+,n$ is even; and $d=\gcd(4,q^n-\epsilon)$ otherwise. We have $S=G/Z(G)$. 

(1) Assume $p=2$. Obviously, $d=1$ and so $G=S$. By using \cite{GAP}, we can assume that $(n,q)\neq (4,2),(5,2)$. Assume that $n$ is even. By the proof of Lemma 4.4 in \cite{GNT}, if $(n,q,\epsilon)\neq (4,4,+)$, then $G$ has a semisimple real element $s_1$ of odd order with $|C_G(s_1)|=(q^{n-1}-\epsilon)(q-1)$ and if $(n,q,\epsilon)= (4,4,+)$, then there is a real element $s_1\in G$ order $13$ with $|C_G(s_1|=3(4^3-1)$. Similarly, for odd $n$, $G$ has a real semisimple element $s_1$ of order $p_1$, a ppd of $2^{(2n-2)f}-1$, with $|C_G(s_1)|=(q^{n-1}+1)(q+\epsilon)$ .   Clearly $|C_G(s_1)|$ is odd in all cases and we are done.  

(2) Assume $p>2$.  Note that $d=4$ if $n$ is even and $\epsilon=+$; and $d=\gcd(4,q^n-\epsilon)$ otherwise. If $q\equiv 1$ mod $4$, then a regular unipotent element $u\in G$ is real by \cite[Lemma 2.2]{TZ05} and since $|C_G(u):Z(G)|$ is a power of $q$, we are done by Lemma \ref{lem:lem1}. So, we may assume that $q\equiv 3$ mod $4$. Hence $(q-1)_2=2$.

(a) Assume that $n$ is even. By the proof of \cite[Lemma 4.4]{GNT}, $G$ has a real element $s_1$ of odd order with $|C_G(s_1)|=(q^{n-1}-1)(q-1)$ if $\epsilon=+$ and a real element $s_2$ of odd order with $|C_G(s)_2|=q^n+1$ if $\epsilon=-$.
By \cite[Lemma A.4]{BG}, we have $(q^{n-1}-1)_2=(q-1)_2$ and thus $|C_G(s_1):Z(G)|=(q^{n-1}-1)(q-1)/4$ is odd. Similarly $(q^n+1)_2=2$ and $d=(4,q^n+1)=2$ whence $|C_G(s_2):Z(G)|=(q^n+1)/d$ is also odd.

(b) Assume that $n$ is odd. Assume first that $\epsilon=+$. Then $n(q-1)/2$ is odd as $q\equiv 3$ mod $4$. Hence by \cite[Theorem 1.9]{TZ04}, a regular unipotent element $u\in G$ is rational and hence is real. Moreover, $|C_G(u):Z(G)|$ is a power of $q$ by \cite[Proposition 5.1.9]{Carter} and hence it is odd.

Next, assume that $\epsilon=-.$ As in the proof of \cite[Lemma 4.4]{GNT}, $G$ has a semisimple real element $s_1$ with $|C_G(s_1)|=(q^{n-1}+1)(q-1).$ By \cite[Lemma A.4]{BG}, we have $(q^{n-1}+1)_2=2$ and $(q-1)_2=2$. Moreover, $(q^n+1)_2=(q+1)_2$ is divisible by $4$, so $d=\gcd(4,q^n+1)=4$. Thus $|C_G(s_1):Z(G)|=(q^{n-1}+1)(q-1)/d$ is odd. The proof is now complete.
\end{proof}

{\scriptsize
\begin{table}[h]
\renewcommand\thetable{C}
\[
\begin{array}{lcr|lcr|lcr} \hline
S & x & |C_{S}(x)| &S &x &  |C_{S}(x)|&S &x &  |C_{S}(x)| \\ \hline

\PSL_2(7) & 3A& 3&\PSL_2(11)&5A&5&\PSL_3(3)   &3B&9\\

\PSL_3(4) & 5A& 5&\PSL_3(5)&5B&25&\PSL_3(7)   &7B&49\\

\PSL_3(8) & 3A& 63&\PSL_3(9)&3B&81&\PSL_3(11)   &11B&121\\

\PSL_6(2) & 9A& 63&\PSL_7(2)&9A&63&\PSU_3(3)   &3B&9\\

\PSU_3(4) & 5E& 25&\PSU_3(5)&5B&25&\PSU_3(7)   &7B&49\\

\PSU_3(8) & 9A& 27&\PSU_3(9)&3B&81&\PSU_3(11)   &11B&121\\

\PSU_3(13) & 13B& 169&\PSU_6(2)&7A&7&\PSU_7(2)   &7A&63\\

\PSp_4(2)' & 5A& 5&\PSp_6(2)&7A&7&\PP\Omega^+_{8}(2)   &7A&7\\

\PP\Omega^-_{8}(2) & 7A& 21&\PP\Omega^+_{10}(2) &17A&51&\PP\Omega^-_{10}(2)   &17A&17\\

\hline
\end{array}
\]

\caption{The triples $(S,x,|C_S(x)|)$  in Lemmas \ref{lem:L} - \ref{lem:D} with $S$ a finite simple group of Lie type.}
\label{tab:Lie-type}
\end{table}}

Proposition  \ref{prop:Lie type} now follows from Lemmas \ref{lem:exceptional} - \ref{lem:D} and Theorem \ref{th:simple} follows from Lemma \ref{lem:non-lie} and Proposition   \ref{prop:Lie type}.

\begin{proof}[\textbf{Proof of Theorem \ref{th:almost simple}}] Let $G$ be an almost simple group with simple socle $S$. 
Assume first that $S\not\in\mathcal{L}$. By Theorem \ref{th:simple} (1), $S$ has a nontrivial real $p$-element $x$ for some odd prime $p$ such that $|C_S(x)|$ is odd. Thus there exists a $2$-element $y\in S$ such that $x^y=x^{-1}$ by \cite[Lemma 2.1]{GNT}.  Hence $y$ normalizes $\la x\ra$ and $\la x,y\ra$ is solvable. Since $x^2\neq 1$, $y$ must be nontrivial. We next claim that $[x,y^g]\neq 1$ for all $g\in G.$ By way of contradiction, assume that $[x,y^g]=1$ for some $g\in G.$ It follows that $y^g\in C_S(x)$ as $y\in S\unlhd G$. But this is impossible as $y^g$ is a nontrivial $2$-element but $|C_S(x)|$ is odd. Therefore, no $G$-conjugate of $y$ commutes with $x$.

Finally, if $S\in\mathcal{L}$, then the result follows immediately from Theorem \ref{th:simple} (2) since $y$ normalizes $\la x\ra$ so that $\la x,y\ra$ is solvable. The proof is complete.
\end{proof}

\section{Solvability and Nilpotency criteria}\label{sec3}
We prove Theorem \ref{th:solvable} in this section. For brevity, let $(*)$ be the property: 

`For every pair of distinct primes $p$ and $q$ and for every pair of elements $x,y\in G$ with $x$ a $p$-element and $y$ a $q$-element, if $\la x,y\ra$ is solvable, then $\la x,y^g\ra$ is solvable for all $g\in G$.' The main part of Theorem \ref{th:solvable} is to show that if $G$ satisfies (*), then $G$ is solvable. 

\begin{proof}[\textbf{Proof of Theorem \ref{th:solvable}}]

Clearly, if $G$ is solvable, then $\la x,y^g\ra$ is solvable for all $x,y,g\in G$. We will now prove the converse. That is, if $G$ satisfies  $(*)$, then $G$ is solvable. Clearly, every proper subgroup of $G$ satisfies  $(*)$ and hence by induction, every proper subgroup of $G$ is solvable.

\medskip
(1) Assume that $G$ is not a nonabelian simple group. Let $N$ be a minimal normal subgroup of $G$. Then $N$ is solvable by the induction hypothesis. We claim that $\overline{G}=G/N$ satisfies (*) and hence by induction again, $G/N$ solvable and so $G$ is solvable. To see this, let $p$ and $q$ be distinct primes and let $xN,yN$ be a $p$-element and a $q$-element, respectively, such that $\la xN,yN\ra$ is solvable. Replacing $x$ and $y$ by their powers, we may assume that $x$ is a $p$-element and $y $ is a $q$-element in $G$. Since $\la x,y\ra N/N=\la xN,yN\ra$ is solvable and $N$ is solvable, $\la x,y\ra$ is solvable.  It follows that $\la x,y^g\ra$ is solvable for all $g\in G$ and thus $\la xN,(yN)^{gN}\ra$ is solvable for all $gN\in G/N.$ Hence $G/N$ satisfies  (*).

\medskip
(2) Assume $G$ is a nonabelian simple group. Then $G$ is a minimal simple group. By \cite[Corollary~1]{Thompson}, $G$ is isomorphic to one of the following simple groups:
\begin{enumerate}
\item $\PSL_2(2^r)$, $r$ is a prime.
\item $\PSL_2(3^r)$, $r$ is an odd prime.
\item $\PSL_2(r)$, $r>3$ is a prime such that $5\mid r^2+1.$
\item ${}^2\textrm{B}_2(2^r)$, $r$ is an odd prime.
\item $\PSL_3(3).$
\end{enumerate}
We consider each case separately.  Recall that if $q\ge 4$ is a prime power and $d=\gcd(2,q-1)$, then $G=\PSL_2(q)$ has a maximal subgroup $H$ which is isomorphic to $ \textrm{D}_{q+1}$ if $q$ is odd and $\textrm{D}_{2(q+1)}$ if $q$ is even. In both cases, $H=\la a,y\ra$, where $a$ is an element of order $(q+1)/d$, $y$ is an involution and $a^y=a^{-1}$. 

Write $q=s^r$, where $s\in \{2,3\}$ and $r$ is a prime with $r>2$ if $s=3.$  First, assume that a ppd $p$ of $s^{2r}-1$ exists. Let $x$ be a generator of a Sylow  $p$-subgroup of $H$. Then $\la x\ra$ is a Sylow $p$-subgroup of $G$, $x\in \la a\ra$ and $x^y=x^{-1}$. So $\la x,y\ra$ is solvable and $H=N_G(\la x\ra)$.  By inspecting Tables $8.1-8.2$  in \cite{BHR} or Table B in \cite{BBGH}, $H$ is the unique maximal subgroup of $G$ containing $\la x\ra$. Now let $g\in G$  be such that $y^g\not\in H.$ It then follows that $\la x,y^g\ra=G$, which is nonsolvable.

For the cases when a ppd of $s^{2r}-1$ does not exist, we have $s=2$ and $r=3$. In this case, $G=\PSL_2(8)$. Let $x\in G$ be a real element of order $9$ and $y\in G$  an involution inverting $x$. It is easy to check that $\la x,y^g\ra=G$ for some $g\in G.$

Next, consider the case $G=\PSL_2(r)$, where $r>3$ is an odd prime and $5\mid r^2+1$. Let  $x$ be an element of order $r$ in $G$. 
Let $t$ be a prime divisor of $(r-1)/2$. Note that $\la x\ra$ is a Sylow $r$-subgroup of $G$ and $H=N_G(\la x\ra)$ is the unique  maximal subgroup of $G$ containing $x$. (See \cite{BHR} or \cite[Table B]{BBGH}). Now $|H|=r(r-1)/2$. Let $y$ be an element of order $t$. Then $\la x,y\ra$ is solvable. Using the same argument as above, we can find $g\in G$ such that $\la x,y^g\ra$ is nonsolvable.

Assume that $G\cong {}^2\textrm{B}_2(2^r)$, where $r$ is an odd prime.  Let $p$ be a prime divisor of $2^r+\sqrt{2^{r+1}}+1$ and let $x$ be a generator of a Sylow $p$-subgroup of $G$. Then $H=N_G(\la x\ra)\cong (2^r+\sqrt{2^{r+1}}+1):4$ is the unique maximal subgroup of $G$ containing $\la x\ra$, where $\la x\ra$ is a Sylow $p$-subgroup of $G$ (see \cite[Table C]{BBGH} or by inspecting \cite[Table 8.16]{BHR}). A similar argument as above excludes this case.

Finally, for $G\cong\PSL_3(3)$, by using GAP \cite{GAP}, $G$ has elements $x$ of order $13$ and $y$ of order $3$ normalizing $\la x\ra$. Moreover, $H=\la x,y\ra$ is the unique maximal subgroup of $G$ containing $x$. So, there exists $g\in G$ such that $\la x,y^g\ra=G$. The proof is now complete.
\end{proof}

Using Theorem  \ref{th:almost simple}, we deduce the following solvability criterion.

\begin{prop}\label{prop:solvable}
Let $G$ be a finite group. Assume that for every pair of two nontrivial elements $x$ and $y$, where $x$ is a $p$-element for some odd prime $p$, and $y$ is a $2$-element,  if $\la x,y\ra$ is solvable, then $x$ commutes with some $G$-conjugate of $y$. Then $G$ is solvable.
\end{prop}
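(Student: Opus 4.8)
The plan is to argue by contradiction, letting $G$ be a counterexample of least order: a non-solvable group satisfying the hypothesis (call it $(\dagger)$) but of minimal order. The first step is to show that $(\dagger)$ passes to quotients by solvable normal subgroups. Given $N\unlhd G$ solvable and nontrivial $\bar x=xN$, $\bar y=yN$ in $G/N$ with $\bar x$ a $p$-element ($p$ odd) and $\bar y$ a $2$-element, I would replace the chosen preimages by their $p$-part and $2$-part respectively, so that $x$ is an honest $p$-element and $y$ an honest $2$-element of $G$. If $\la\bar x,\bar y\ra$ is solvable, then since $\la x,y\ra\cap N\leq N$ is solvable and $\la x,y\ra/(\la x,y\ra\cap N)\cong\la\bar x,\bar y\ra$ is solvable, the group $\la x,y\ra$ is solvable; so $(\dagger)$ gives $g$ with $[x,y^g]=1$, whence $[\bar x,\bar y^{\bar g}]=1$. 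Thus $G/N$ inherits $(\dagger)$. Applying this with $N=R(G)$: if $R(G)\neq 1$ then $G/R(G)$ has smaller order and satisfies $(\dagger)$, so by minimality it is either solvable, forcing $G$ solvable, or a smaller counterexample; both are absurd. Hence $R(G)=1$.

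Now every minimal normal subgroup of $G$ is non-abelian; fix one, $M=T_1\times\cdots\times T_k$, a direct product of pairwise isomorphic non-abelian simple groups $T_i\cong T$, with $G$ transitive on the factors by conjugation. The step I expect to be the main obstacle is precisely this multi-factor situation: a single-factor choice of the violating pair is useless, since if $x\in T_1$ and a conjugate $y^g$ lands in a different factor then $[x,y^g]=1$ holds trivially and yields nothing. To defeat this I would spread $x$ across all factors. Applying Theorem \ref{th:simple} to the simple group $T_1$ produces a nontrivial $p$-element $x_1$ of odd prime power order and a nontrivial $2$-element $y_1$ normalizing $\la x_1\ra$ such that no $\Aut(T_1)$-conjugate of $y_1$ commutes with $x_1$ (in case (1) this holds because $|C_{T_1}(x_1)|$ is odd, using \cite[Lemma 2.1]{GNT} to supply a $2$-element inverting $x_1$; in case (2) it is part of the statement). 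By transitivity I choose $g_i\in G$ with $T_1^{g_i}=T_i$, set $x_i=x_1^{g_i}\in T_i$ and $x=x_1x_2\cdots x_k$; then $x$ is a $p$-element and $y:=y_1\in T_1$ is a $2$-element.

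Next I would verify $\la x,y\ra$ is solvable: since $y=y_1$ commutes with each $x_i$ for $i\neq 1$ and normalizes $\la x_1\ra$, we have $\la x,y\ra\leq\la x_1,y_1\ra\times\la x_2\ra\times\cdots\times\la x_k\ra$, a direct product of solvable groups. So $(\dagger)$ gives $g\in G$ with $y^g\in C_G(x)$. Because $M\unlhd G$ and $y=y_1\in T_1\subseteq M$, the conjugate $y^g=y_1^g$ lies in the single factor $T_j:=T_1^g$ and is a nontrivial $2$-element. As distinct factors centralize one another, $[x,y^g]=1$ forces $[x_j,y^g]=1$, that is $y^g\in C_{T_j}(x_j)$.

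Finally I would transport this back to $T_1$. Since $T_1^{g_j}=T_j=T_1^g$, the element $h:=g\,g_j^{-1}$ satisfies $T_1^{h}=T_1$, so conjugation by $h$ restricts to an automorphism of $T_1$ and $z:=y_1^{h}\in T_1$ is an $\Aut(T_1)$-conjugate of $y_1$. Conjugating the relation $[x_j,y^g]=1$ by $g_j^{-1}$ and using $x_j^{g_j^{-1}}=x_1$ gives $[x_1,z]=1$. Thus some $\Aut(T_1)$-conjugate of $y_1$ commutes with $x_1$, contradicting the choice of $x_1,y_1$. This contradiction rules out a minimal counterexample, so $G$ is solvable. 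The only points requiring care are the factor bookkeeping in the transport step and the fact, already built into Theorem \ref{th:simple}, that the obstruction is against all of $\Aut(T_1)$ and not merely $T_1$.
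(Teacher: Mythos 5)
Your proof is correct and follows essentially the same strategy as the paper: reduce to $R(G)=1$ by passing $(\dagger)$ to quotients, take a minimal normal subgroup $N\cong S^k$, invoke Theorem \ref{th:simple} to produce the obstruction pair in $S$, and derive a contradiction by projecting the forced commuting relation onto a single simple factor. The only (harmless) variation is that you spread $x$ over all factors but keep $y$ in one factor and transport back via the permutation action, whereas the paper takes both $x$ and $y$ diagonal and computes $gyg^{-1}$ explicitly in $\Aut(S)\wr\Sym_k$.
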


\begin{proof}
Let $G$ be a counterexample to the proposition with minimal order. Then $G$ is not solvable.

\smallskip
(1) We first claim that the solvable radical of $G$ is trivial. Assume by contradiction that the solvable radical, say $K$, of $G$ is nontrivial.  Let $xK,yK\in G/K$, where $xK$ is a $p$-element for some odd prime $p$ and $yK$ is a $2$-element. Replacing $x$ and $y$ by their powers, we may  assume that $x$ and $y$ are $p$-element and $2$-element, respectively. Let $A=\la x,y\ra$. Assume that $AK/K=\la xK,yK\ra$ is solvable. Then $AK/K\cong A/A\cap K$ is solvable and since $K$ is solvable, $A\cap K$ is also solvable, whence $A$ is solvable. By the hypothesis on $G$, $x$ and $y^g$ commute for some $g\in G$ and hence $xK$ and $(yK)^{gK}$ commute in $G/K$. Therefore, $G/K$ satisfies the hypothesis of the proposition and since $|G/K|<|G|$, by the minimality of $|G|$, $G/K$ is solvable and so $G$ is solvable, a contradiction.

\smallskip
(2) Let $N$ be a minimal normal subgroup of $G$. Then $N\cong S^k$ for some nonabelian simple group $S$ and $k\ge 1.$ By Theorem  \ref{th:almost simple}, there exist a $p$-element $a$ and a $2$-element $b$ in $S$ such that $\la a,b\ra$ is solvable but $a$ does not commute with any $\Aut(S)$-conjugate of $b$, where $p$ is an odd prime. 
Let $x=(a,a,\dots,a)$ and $y=(b,b,\dots,b)$ be two elements in the diagonal subgroup of $N.$ Note that $x$ is a $p$-element and $y$ is a $2$-element in $N$. Since $\la a,b\ra$ is solvable, we see that $\la x,y\ra$ is solvable. By the hypothesis, there exists an element $g\in G$ such that $x$ commutes with $gyg^{-1}.$

\smallskip
(3) Let $C=C_G(N)$. Then $C\unlhd G$ and $G/C$ has a unique minimal normal subgroup $NC/C\cong N.$ To simplify the notation, we may assume that $C=1$.  Since $G$ can be embedded into $\Aut(N)\cong \Aut(S)\wr \Sym_k$ and $g\in G,$ we can write $g=(u_1,u_2,\dots,u_k)\sigma$, where $u_i\in\Aut(S)$ and $\sigma\in\Sym_k.$ Now we can check that $$gyg^{-1}=(u_1,u_2,\dots,u_k)\sigma (b,b,\dots,b)\sigma^{-1} (u_1,u_2,\dots,u_k)^{-1}=(b^{u_1^{-1}},b^{u_2^{-1}},\dots,b^{u_k^{-1}}).$$
Since $x$ and $gyg^{-1}$ commute, $a$ and $u_1bu_1^{-1}$ commute, which is a contradiction as $u_1\in\Aut(S)$. This completes the proof.
\end{proof}


We are now ready to prove Theorems \ref{th:nilpotent} and \ref{th:direct-factor}.

\begin{proof}[\textbf{Proof of Theorem \ref{th:nilpotent}}]
Clearly, the `only if' direction of the theorem holds trivially. Now assume that for every pair of distinct primes $p$ and $q$ and for every pair of elements $x,y\in G$ with $x$ a $p$-element and $y$ a $q$-element, if $\la x,y\ra$ is solvable, then $x,y^g$ commute for some $g\in G.$ By  Proposition \ref{prop:solvable}, $G$ is solvable and so by \cite[Corollary E]{DGHP}, $G$ is nilpotent.
\end{proof}

\begin{proof}[\textbf{Proof of Theorem \ref{th:direct-factor}}]
Assume that for every pair of elements $x,y\in G$ with $x$ a $p$-element and $y$ a $2$-element, where $p>2$ is a prime, if $\la x,y\ra$ is solvable, then $x,y^g$ commute for some $g\in G.$ By Proposition  \ref{prop:solvable}, $G$ is solvable. Therefore, for every $p$-element $x$ with $p$ an odd prime and every $2$-element $y$,  $x,y^g$ commute for some $g\in G.$

Let $P$ be a Sylow $2$-subgroup of $G$. Let $1\neq z\in Z(P)$. Then $P\leq C_G(z)\leq G.$ If $C_G(z)=G$, then $z\in Z(G)$ and since $G/\la z\ra$ satisfies the hypothesis of the theorem, $P/\la z\ra$ is a direct factor of $G/\la z\ra$. So $G/\la z\ra=P/\la z\ra\times B/\la z\ra$, where $B$ is a normal subgroup of $G$ containing $z$ such that $B\cap P=\la z\ra$ and $|B:\la z\ra|$ is odd. By Schur-Zassenhaus theorem $B=A\times \la z\ra$ for some normal subgroup $A$ of odd order of $B$. It then implies that $G=P\times A.$ 

Assume that $C_G(z)<G.$ Since $|G:C_G(z)|$ is odd, $G$ contains a $p$-element $x$, where $p$ is an odd prime, such that $x^G\cap C_G(z)=\emptyset$ by \cite[Theorem 1]{FKS}. However, this contradicts the claim in the first paragraph.
\end{proof}

\section{Applications to graphs on groups}\label{sec4}
We prove all the corollaries  in this section.

\begin{proof}[\textbf{Proof of Corollary \ref{cor:SCC-graph}}]
It suffices to show that if the expanded $\SCC$-graph of $G$ coincides with the solvable graph of $G$, then $G$ is solvable.
Let $x,y\in G$ be non-conjugate elements in $G$.  Assume that $x$ and $y$ are joined in the expanded $\SCC$-graph. Then $\la x,y^g\ra$ is solvable for some $g\in G$. By the hypothesis, it follows that $x$ and $y$ are joined in the solvable graph of $G$ and thus $\la x,y\ra$ is solvable. Therefore, if $\la x,y^g\ra$ is solvable for some $g\in G,$ then $\la x,y\ra$ is solvable for all non-conjugate elements $x,y\in G.$

We now claim that if $\la x,y\ra$ is solvable for some non-conjugate elements $x,y\in G,$ then $\la x,y^g\ra$ is solvable for all $g\in G.$ Indeed, assume that $x,y\in G$ are non-conjugate in $G$ such that $\la x,y\ra$ is solvable. Let $g\in G$ and let $z=y^g$. Now $x$ and $z$ are joined in the expanded $\SCC$-graph of $G$ since $\la x,z^{g^{-1}}\ra=\la x,y\ra$ is solvable, whence $\la x,z\ra$ is solvable by the hypothesis. The claim is proved. Finally, the corollary follows from Theorem \ref{th:solvable}.
\end{proof}

\begin{rem}
There is another proof of Corollary \ref{cor:SCC-graph} using results in \cite[Corollary 1]{Guest10} and \cite[Corollary 1.5]{GGKP09} stating that a finite group $G$ is solvable if and only if for all $x,g\in G$, $\la x,x^g\ra$ is solvable. Too see this, assume that the expanded $\SCC$-graph coincides with the solvable graph of a finite group $G$. Arguing as in the previous proof, if $x\in G$ and $g\in G$, then by definition, $x$ and $x^g$ are adjacent in the expanded $\SCC$-graph of $G$ so that $\la x,x^g\ra$ is solvable. Hence $G$ is solvable by the aforementioned results.
\end{rem}

\begin{proof}[\textbf{Proof of Corollary \ref{cor:NCC-graph}}]  If $G$ is nilpotent, then clearly the expanded $\SCC$-graph is equal to the expanded $\NCC$-graph. For the converse, assume that the expanded $\SCC$-graph and the expanded $\NCC$-graph coincide. Then for all $x,y\in G$ such that $x$ and $y$ are not conjugate in $G$, if $\la x,y^g\ra$ is solvable for some $g\in G$, then $\la x,y^{gh}\ra$ is nilpotent for some $h\in G.$ Replace $y$ by $y^g,$ we see that if $x,y\in G$ are not conjugate in $G$ and $\la x,y\ra$ is solvable, then $\la x,y^g\ra$ is nilpotent for some $g\in G.$ Note that if $x$ and $y$ have coprime orders, then so do $x$ and $y^g$ and thus if $\la x,y^g\ra$ is nilpotent, then $x$ and $y^g$ commute. Therefore, we can apply Theorem \ref{th:nilpotent} to conclude that $G$ is nilpotent.
\end{proof}

\begin{rem}
In view of these results, we could study the expanded $\SCC$-graph ($\NCC$-graph or $\CCC$-graph) by forgetting the edges among the same conjugacy classes. That is, we say that there is an edge between two elements $x,y\in G$ in the $\SCC$-graph ($\NCC$-graph or $\CCC$-graph) if and only if $x$ and $y$ are not conjugate in $G$ and $\la x,y^g\ra$ is solvable (nilpotent or abelian) for some $g\in G.$
\end{rem}

\begin{proof}[\textbf{Proof of Corollary \ref{cor:invariable-gen-graph}}]  Let $G$ be a finite non-nilpotent group.
If $G$ is minimal non-nilpotent, then the result is clear. So we will prove the other direction. Assume that for every pair of distinct primes $p$ and $q$ and for every pair of elements $x,y\in G$ with $x$ a $p$-element and $y$ a $q$-element, if $\la x,y\ra\neq G$, then $[x,y^g]=1$ for some element $g\in G.$

We first claim that $G$ is solvable. Suppose by contradiction that $G$ is not solvable. Let $p$ be an odd prime, let $x\in G$ be a $p$-element and $y\in G$ be a $2$-element such that $\la x,y\ra$ is solvable. Since $G$ is not solvable, $\la x,y\ra\neq G$ and thus by the hypothesis, $[x,y^g]=1$ for some $g\in G$. Now by Proposition \ref{prop:solvable}, $G$ is solvable, which is a contradiction.

We now show that $G$ is minimal non-nilpotent by induction on $|G|$. Note that $G$ is non-nilpotent by the hypothesis.
Let $N$ be a minimal normal subgroup of $G$. Since $G$ is solvable, $N$ is an elementary abelian $p$-group for some prime $p$ dividing $|G|.$ Let $P$ be a Sylow $p$-subgroup of $G$. Then $N\cap Z(P)>1$ and let $z\in N\cap Z(P)$ with $|z|=p.$ Let $C=C_G(z)$. 

Assume that $C=G$. Then $z\in Z(G)$ and $G/\la z\ra$  satisfies the hypothesis of the corollary. So by the induction hypothesis, $G/\la z\ra$ is a minimal non-nilpotent group. (Note that if $G/\la z\ra$ is nilpotent, then so is $G$.) Hence if $M/\la z\ra$ be a maximal subgroup of $G/\la z\ra$, then $M/\la z\ra$ is nilpotent and since $z\in Z(M)$, $M$ is nilpotent. Now let $M$ be a maximal subgroup of $G$. If $z\in M$, then $M$ is nilpotent by the previous claim. Assume that $z\not\in M$. Then $G=M\la z\ra$ by the maximality of $M.$ Let $a,b\in M$ with coprime prime power order. Then $\la a,b\ra\leq M$ and thus $[a,b^g]=1$ for some $g\in G.$ Since $G=\la z\ra M$ and $z\in Z(G)$, we have $b^g=b^m$ for some $m\in M$, and so $[a,b^m]=1$ for some $m\in M$. By Corollary E in \cite{DGHP}, $M$ is nilpotent. Thus $G$ is minimal non-nilpotent in this case.

Now assume that $C<G.$ By \cite[Theorem 1]{FKS}, there exists a $q$-element $y\in G$ with $y^G\cap C=\emptyset.$ Since $P\leq C,$ we have $q\neq p.$ If $\la z,y\ra\neq G$, then $[z,y^g]=1$ for some $g\in G$, which contradicts the choice of $y$. 
Thus $G=\la z,y\ra$ and hence $G=N\la y\ra$. It follows that $N=P$ is a normal Sylow $p$-subgroup of $G$ and $Q=\la y\ra$ is a cyclic Sylow $q$-subgroup of $G$. Since $N$ is a minimal normal subgroup of $G$ and is abelian, $Q$ is a maximal subgroup of $G$ and acts irreducibly on $N$.

Let $M$ be a maximal subgroup of $G$. We claim that $M$ is nilpotent. If $N\nleq M$, then $G=MN$ and so $M\cap N=1$ since $N$ is abelian. By comparing orders, we see that $|M|=|Q|$, hence $M$ is nilpotent. So, assume that $N\leq M$. If $M=N$, then $M$ is nilpotent. Thus we assume that $M>N$ and so $M=N(M\cap Q)$ with $1<Y=M\cap Q\leq \la y\ra$. Therefore, $Y=\la y_1\ra$ for some $1\neq y_1\in \la y\ra$. Observe that $y_1^G=y_1^N$ as $y_1\in \la y\ra$ and $G=\la y\ra N.$ Let $a\in N$. Then $\la a,y_1\ra\neq G$ and so $[a,y_1^g]=1$ for some $g\in G$. As $y_1^g=y_1^m$ for some $m\in N$,  $[a,y_1^m]=1$. Since $a,m\in N$, $am=ma$ and so $[a,y_1]=1$. Hence $y_1$ centralizes $N$ as $a\in N$ is chosen arbitrarily. Therefore, $M=N\la y_1\ra$ is nilpotent.
\end{proof}

In the following, we will make use of Theorem B in \cite{DGHP}. 
\begin{proof}[\textbf{Proof of Corollary \ref{cor:invariable-gen-SCC}}] 
Observe that any quotient of $G$ satisfies the same hypothesis, so we may assume that $R(G)=1.$ We now prove that $G$ is a simple group. Let $N$ be a minimal normal subgroup of $G$.  Then $N\cong S_1\times S_2\times\dots\times S_k\cong S^k$, where $S=S_1$ is a nonabelian simple group and $k\ge 1.$
If $G=N$, then $k=1$ and thus $G$ is simple. So, assume that $N\neq G.$ By Theorem B in \cite{DGHP}, there  exist two distinct prime divisors $p$ and $q$ of $|S|$ such that for all $x_1,y_1\in S$ with $|x_1|=p$ and $|y_1|=q$, $\la x_1,y_1\ra$ is nonsolvable. Now let $x=(x_1,x_1,\dots,x_1)\in N$ and $y=(y_1,y_1,\dots,y_1)\in N$. Then $\la x,y\ra\leq N$ and so $\la x,y\ra\neq G$, therefore, $\la x,y^g\ra$ is solvable for some $g\in G.$ However, the projection of $\la x,y^g\ra$ to the first component of $N$ is the subgroup $\la x_1,u_1y_1{u_1^{-1}}\ra$ for some $u_1\in \Aut(S)$. (See the proof of Proposition \ref{prop:solvable}.) Sine $|u_1y_1{u_1^{-1}}|=|y_1|=q$, we know that $\la x_1,u_1y_1{u_1^{-1}}\ra$ is nonsolvable, which is a contradiction.
\end{proof}

\begin{rem} The converse of this corollary holds if $G/R(G)$ is a minimal simple groups. However,
it is not true that if a finite simple group $S$ satisfies the hypothesis of Corollary \ref{cor:invariable-gen-SCC}, then it is a minimal simple group. For a counterexample, we can take $S=\PSL_2(31)$. The only nonsolvable subgroups of $S$ are either isomorphic to $\Alt_5$ or $S$ itself. We can check using \cite{GAP} that for every pair of nontrivial elements $x,y\in S$, either $x$ and $y$ invariably generate $S$ or $\la x,y^g\ra$ is solvable for some $g\in G.$
\end{rem}

However, for sporadic simple groups, the Tits group and the alternating groups, we obtain the following. Note that $\Alt_5\cong \PSL_2(4)\cong\PSL_2(5)$ is a minimal simple group.
\begin{lem}\label{lem:inva-gen}
Let $S$ be a sporadic simple group, the Tits group or an alternating group of degree $n\ge5$. If $S$ satisfies the hypothesis of Corollary \ref{cor:invariable-gen-SCC}, then $S\cong \Alt_5.$
\end{lem}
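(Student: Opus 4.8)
The plan is to prove the contrapositive: for each $S$ in the list other than $\Alt_5$, I will exhibit a pair of elements $x,y\in S$ of coprime prime power order such that $\la x,y\ra\neq S$ while $\la x,y^g\ra$ is nonsolvable for every $g\in S$. Such a pair violates the hypothesis of Corollary~\ref{cor:invariable-gen-SCC} (taken with $G=S$), so no $S\ne\Alt_5$ in the list can satisfy that hypothesis. On the other hand $\Alt_5$ does satisfy it trivially, since every proper subgroup of $\Alt_5$ is solvable, so that $\la x,y\ra\neq\Alt_5$ already forces $\la x,y\ra$ solvable. Thus the whole content is to produce, group by group, an ``invariably nonsolvable'' pair lying inside a proper subgroup.

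For the sporadic groups and the Tits group I would carry out a finite computation in GAP~\cite{GAP}. Using Theorem B of \cite{DGHP} one fixes two distinct prime divisors $p,q$ of $|S|$ with the property that \emph{every} $p$-element together with \emph{every} $q$-element generates a nonsolvable subgroup; it then suffices to locate a proper (maximal) subgroup $M<S$ whose order is divisible by both $p$ and $q$, and to take $x\in M$ of order $p$ and $y\in M$ of order $q$. Then $\la x,y\ra\leq M<S$, while $\la x,y^g\ra$ is nonsolvable for all $g$ because $y^g$ is again a $q$-element. The existence of such primes and of such a subgroup is read off the conjugacy class and maximal subgroup data in the ATLAS and verified directly in GAP.

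The heart of the argument is the family $\Alt_n$, where I would work with cycles. Choose odd primes $q<p$ with $p-q\geq 3$, $p+q>n$ and $p\leq n-1$; for all sufficiently large $n$ such a pair exists because $(n/2,n-1]$ contains at least three primes, so two of them differ by at least $3$ and their sum exceeds $n$ (the finitely many small $n$ are treated separately below). Take $x=(1\,2\,\cdots\,p)$ and $y=(1\,2\,\cdots\,q)$, both even since $p,q$ are odd, so $x,y\in\Alt_n$ have coprime orders $p,q$. As $x$ is a $p$-cycle and $y$ is a $q$-cycle of prime length fixing $p-q\geq 3$ points of $\{1,\dots,p\}$, Jordan's theorem on primitive groups containing a prime-length cycle gives $\la x,y\ra=\Alt(\{1,\dots,p\})$, which is a proper nonsolvable subgroup of $\Alt_n$ because $p\leq n-1$. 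For an arbitrary $g$, set $D=\operatorname{supp}(x)\cup\operatorname{supp}(y^g)$ and $d=|D|$. If $\operatorname{supp}(y^g)\subseteq\{1,\dots,p\}$, then exactly as above $\la x,y^g\ra=\Alt(\{1,\dots,p\})$. Otherwise $p+q>n\geq d$ forces the two supports to meet, so $\la x,y^g\ra$ is transitive of degree $d>p$; since $p>n/2\geq d/2$, a short block argument (a single $p$-cycle can neither permute $b\leq d/2<p$ blocks nontrivially nor live inside one block of size $\leq d/2<p$) shows the action is primitive. As $y^g$ is a $q$-cycle fixing $|\operatorname{supp}(x)\setminus\operatorname{supp}(y^g)|\geq p-q\geq 3$ points of $D$, Jordan's theorem yields $\la x,y^g\ra\geq\Alt(D)$, which is nonsolvable. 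In every case $\la x,y^g\ra$ is nonsolvable, as required.

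Finally, the small values of $n$ for which no prime pair with gap at least $3$ and sum exceeding $n$ is available (for instance $n=6,7,10$) I would dispatch by the same template, taking $x$ a $p$-cycle and $y$ a $q$-cycle of smallest admissible supports and replacing the Jordan step in the transitive case by the elementary fact that a solvable transitive group of the relevant small degree $d$ has order prime to $p$, hence cannot contain the $p$-cycle $x$; the very few remaining configurations are checked in GAP. The main obstacle is precisely this uniform treatment of the transitive case for $\Alt_n$: one must guarantee that $\la x,y^g\ra$ never collapses to a solvable (necessarily affine) primitive group, and the device that makes this work is enforcing the prime gap $p-q\geq 3$, which ensures that the shorter cycle $y^g$ always fixes at least three points of $D$ — exactly the hypothesis needed to invoke Jordan's theorem.
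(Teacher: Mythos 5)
Your proposal is correct, and for the sporadic groups and the Tits group it coincides with the paper's argument: fix primes $p,q$ for which every element of order $p$ together with every element of order $q$ generates a nonsolvable subgroup (Theorem B of \cite{DGHP}), then place one such pair inside a proper subgroup; the paper records the witnesses in its Table \ref{tab:inva-gen}. For the alternating groups, however, you take a genuinely different route. The paper chooses two primes $p<q$ with $(n-1)/2\leq p<q\leq n-1$ and verifies the numbered hypotheses of Lemma 3.2 of \cite{DGHP}, which yields at once that \emph{every} $p$-element and \emph{every} $q$-element of $\Alt_n$ generate a nonsolvable subgroup; the quantifier ``for all $g$'' then comes for free, and a $p$-cycle and a $q$-cycle inside $\Alt_{n-1}$ finish the argument uniformly for all $n\geq 6$. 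You instead fix explicit cycles with $p-q\geq 3$, $p+q>n$, $p\leq n-1$ and analyse $\la x,y^g\ra$ directly: the condition $p+q>n$ forces the supports to meet, $p>n/2$ gives primitivity via the block argument, and $p-q\geq 3$ supplies the three fixed points needed for Jordan's theorem. This is more self-contained (no appeal to the DGHP generation lemma) and the individual steps are sound, but it costs you an exceptional list of small $n$ where no such prime pair exists --- note that $n=11$ belongs to that list alongside your $n=6,7,10$ (the primes at most $10$ give either gap $2$ or sum at most $10$) --- and your sketch for those cases (``solvable transitive groups of degree $d$ have order prime to $p$'') does not hold as stated for $d=p=5$, where $\mathrm{AGL}(1,5)$ is solvable, transitive and contains $5$-cycles; one must then use the $q$-cycle again, or simply check these finitely many groups in \cite{GAP}, as you ultimately propose. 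In short: same skeleton, identical treatment of the sporadic groups, and a more elementary but less uniform treatment of $\Alt_n$ that trades the citation of \cite[Lemma 3.2]{DGHP} for Jordan's theorem plus a handful of computer-verified small degrees.
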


\begin{proof} Assume that $S$ satisfies the hypothesis of Corollary \ref{cor:invariable-gen-SCC}.

(1) If $S$ is a sporadic simple group or the Tits group, then we can find two distinct prime dividsors $p$ and $q$ of $|S|$ such that the subgroup $\la x,y\ra$ is nonsolvable for any $x,y\in S$ with $|x|=p$ and $|y|=q.$ The primes $p,q$ and the possibilities  for $\la x,y\ra$ (excluding $S$ itself) are listed in Table \ref{tab:inva-gen}.

(2) Assume that $S=\Alt_n$ with $n\ge 5.$ If $n=5$, then $\Alt_5$ is a minimal simple group. So assume that $n\ge 6$ and let $m=n-1\ge 5.$
As in the proof of Proposition 3.3 in \cite{DGHP}, there exist two distinct primes $p,q$ with $m/2\leq p< q\leq m.$ We claim that $\Alt_n$ together with two primes $p$ and $q$ satisfy all the hypotheses of \cite[Lemma 3.2]{DGHP} and so $\la x,y\ra$ is nonsolvable for all $x,y\in \Alt_n$ with $|x|=p$ and $|y|=q.$ Since $p<q\leq n-1$, it is possible to find $x,y\in \Alt_{n-1}$ and so $\la x,y\ra\neq \Alt_n.$ Therefore, $\Alt_n$ with $n\ge 6$, does not satisfy the hypothesis of the corollary.

(i) Since $q>p\ge (n-1)/2>2$, $p$ odd and so $q\ge p+2\ge (n-1)/2+2>n/2$. Thus a Sylow $q$-subgroup of $\Alt_n$ is of order $q$. Furthermore, as $p\ge (n-1)/2$, we see that $3p>n$ so that a Sylow $p$-subgroup of $\Alt_n$ has order $p$ or $p^2$. 

(ii) If $p>(n-1)/2$, then $2p>n-1\ge q$. On the other hand, if $p=(n-1)/2$, then $n-1$ is even so $q<n-1=2p.$ Thus in either cases,  $p\nmid q-1$

(iii) Since $q\ge p+2$, we see that $q\nmid (p^2-1).$

(iv) Next, since $q+p\ge (p+2)+p=2p+2\ge n-1+2=n+1>n$, $\Alt_n$ has no element of order $pq$.

Thus $\la x,y\ra$ is nonsolvable. The proof is now complete.
\end{proof}

Using the argument as in the proof of Theorem B in \cite{DGHP}, for the finite simple groups of Lie type $S$, one may only need to look at the cases when $S$ has small Lie rank to obtain the classification of all the examples in Corollary \ref{cor:invariable-gen-SCC}.
{\scriptsize
\begin{table}[h]
\renewcommand\thetable{D}
\[
\begin{array}{llll|llll} \hline
S & p&q & \la x,y\ra &S &p &q &\la x,y\ra \\ \hline

{\rm M}_{11} & 2&11 & \LL_2(11) &{\rm M}_{12} &2 &11 &\LL_2(11), {\rm M}_{11} \\ 

{\rm M}_{22} & 2&11 & \LL_2(11) & {\rm M}_{23} &2 &11 &\LL_2(11), {\rm M}_{11}, {\rm M}_{22} \\ 

{\rm M}_{24} & 2&23 & \LL_2(23) & {\rm J}_1 &3 &11 &\LL_2(11) \\ 

{\rm J}_2& 2& 7 & \LL_2(7),\UU_3(3) & {\rm J}_3 & 2 &19 &\LL_2(19) \\ 

{\rm J}_4& 5& 37 & \UU_3(11) & {\rm HS} & 2 &11 &\LL_2(11), {\rm M}_{11}, {\rm M}_{22} \\ 

{\rm McL}& 2& 11 & \LL_2(11), {\rm M}_{11}, {\rm M}_{22} & {\rm He} & 5 &17 &\LL_2(16),\PSp_4(4) \\ 

{\rm Suz}& 7& 13 & \LL_2(13), {\rm G}_2(4) & {\rm Ly} & 7 &31 &{\rm G}_2(5)   \\ 

{\rm Ru}& 5& 29 & \LL_2(29) & {\rm O'N} & 5 & 19 & {\rm J}_1   \\ 

{\rm HN}& 7& 19 & \UU_3(8) & {\rm Th} & 5 &19 &\LL_2(19)   \\ 

{\rm Co}_1& 11& 13 & 3\cdot {\rm Suz} & {\rm Co}_2 & 2 & 23 & {\rm M}_{23}   \\ 

{\rm Co}_3& 2& 23 & {\rm M}_{23} & {\rm Fi}_{22} & 7 &13 &\LL_2(13),\OO_7(3)   \\ 

{\rm Fi}_{23}& 11& 13 & 2\cdot {\rm M}_{22} & {\rm Fi}_{24}' & 17 &23 &{\rm Fi}_{23}   \\ 

{\rm B}& 19& 31 & {\rm Th} & {\rm M} & 2 &59 &\LL_2(59)   \\ 

{}^2{\rm F}_4(2)'& 5& 13 & \LL_2(25) &  & &&  \\ \hline

\end{array}
\]
\caption{The quadruples $(S,p,q,\la x,y\ra)$ in Lemma \ref{lem:inva-gen} with $S$ a sporadic simple group or the Tits group}
\label{tab:inva-gen}
\end{table}}

\section{A characterization of the solvable radicals}\label{sec5}

For a finite group $G$, recall that the Baer-Suzuki theorem states that for any element $x\in G$, $\la x^G\ra$ is nilpotent if and only if $\la x,x^g\ra$ is nilpotent for all $g\in G.$ This gives a characterization of the Fitting subgroup $F(G) $ of $G$, the largest normal nilpotent subgroup of $G$. Generalizing this to solvable radical, for an odd prime $p\ge 5$ and  an element $x\in G$ of order $p$, it is shown in \cite{Guest10, GGKP09} that $x\in R(G)$ if and only if $\la x,x^g\ra$ is solvable for all $g\in G.$ In \cite{GL14}, Guest and Levy showed that for an arbitrary element $x\in G$, $x\in R(G)$ if and only if for all odd primes $p$ and all $p$-elements $y\in G$, the subgroup $\la x,y\ra$ is solvable. We next prove the following conditional characterization of the solvable radical as follows. 

\begin{thm}\label{th:radical} 
Let $G$ be a finite group, let $p\ge 5$ be a prime and let $x\in G$ be an element of order $p$. Assume that for all primes $r\neq p$, and all $r$-elements $y\in G$, if $\la x,y\ra$ is solvable, then $\la x,y^g\ra$ is solvable for all $g\in G$.  Then $x\in R(G)$.
\end{thm}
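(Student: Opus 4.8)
The plan is to argue by induction on $|G|$, reducing first to the case $R(G)=1$ with $G=\la x^G\ra$, and then to produce an element of order coprime to $p$ that witnesses a failure of the hypothesis unless $x=1$.

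\emph{Reductions.} First I would verify that the hypothesis is inherited both by the quotient $G/R(G)$ and by the normal closure $\la x^G\ra$. For the quotient, given an $r$-element $\bar y\in G/R(G)$ with $r\ne p$, one lifts it to an $r$-element $y\in G$ (pass to the preimage of $\la\bar y\ra$ and take a suitable element of a Sylow $r$-subgroup of it), and then uses that $R(G)$ is solvable to transfer solvability of $\la\bar x,\bar y\ra$ back to $\la x,y\ra$; the hypothesis for $G$ yields the desired conclusion modulo $R(G)$. Since $R(\la x^G\ra)$ is characteristic in $\la x^G\ra$, hence normal and solvable in $G$, it lies in $R(G)$, so it suffices to prove $x\in R(\la x^G\ra)$. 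By induction I may therefore assume $R(G)=1$ and $G=\la x^G\ra$, and suppose for contradiction that $x\ne 1$, so that $|x|=p$ and $G$ is a nonsolvable group with trivial solvable radical.

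\emph{Strategy.} In this reduced situation it is enough to exhibit a single prime $r\ne p$ and an $r$-element $y$ for which $\la x,y\ra$ is solvable while $\la x,y^g\ra$ is nonsolvable for some $g\in G$, since this directly contradicts the hypothesis. As $p\ge 5$ is odd I would look for $y$ among $2$-elements. Because $R(G)=1$ we have $C_G(\soc(G))=1$, so $x$ acts nontrivially on some minimal normal subgroup $N=S_1\times\cdots\times S_k$ of $G$ with the $S_i$ nonabelian simple; moreover, by Guest and by \cite{Guest10,GGKP09} (valid as $p\ge 5$) there is some $g$ with $\la x,x^g\ra$ nonsolvable, so nonsolvable overgroups of $x$ are available to work inside.

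\emph{Construction of the witness.} Mirroring the treatment of the minimal simple groups in the proof of Theorem~\ref{th:solvable}, I would split according to whether $x$ normalizes each factor $S_i$ and induces a nontrivial (inner or outer) $p$-automorphism on some $S_i$, or $x$ permutes the factors in $p$-cycles. In the first case I pass to the almost simple group $A=\la S_i,x\ra/C_{\la S_i,x\ra}(S_i)$ with socle $S_i$ and nontrivial $p$-element $\bar x$, and seek a $2$-element $y\in S_i$ normalizing $\la\bar x\ra$ (so that $\la\bar x,y\ra$ is metacyclic, hence solvable) such that some $G$-conjugate falls outside the unique maximal overgroup of $\la\bar x\ra$, forcing $\la\bar x,y^g\ra$ to be nonsolvable; here the existence of a suitable $2$-element inverting or normalizing $\la\bar x\ra$, and of the requisite nonsolvable conjugate, relies on the real-element and centralizer information of \cite{TZ04,TZ05,GNT} together with the maximal-subgroup data used in Theorem~\ref{th:solvable}, and on \cite{FKS} to locate a $2$-element avoiding a prescribed centralizer. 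In the factor-permutation case I would take $y$ to be a $2$-element built from the $p$-cycle permutation of factors together with an involution inside one factor, again arranged so that $\la x,y\ra$ is solvable but a conjugate generates a nonsolvable subgroup. Lifting $y$ back to $G$ gives an $r$-element with $r=2\ne p$ contradicting the hypothesis, whence $x=1$.

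\emph{Main obstacle.} The genuine difficulty is the uniform construction of the coprime witness $y$ in the third step: one must guarantee, across all isomorphism types of $S_i$ and both types of action of $x$, that a $2$-element normalizing $\la x\ra$ exists and that some $G$-conjugate of it generates a nonsolvable group together with $x$. This is precisely where the classification of finite simple groups and the cited results on real elements and their centralizers enter, paralleling the case-by-case analysis of minimal simple groups in Theorem~\ref{th:solvable}.
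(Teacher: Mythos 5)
Your reduction (induct on $|G|$, pass to $G/R(G)$ and to $\la x^G\ra$, so that a minimal counterexample has $R(G)=1$, $G=\la x^G\ra$, and $x\in R(H)$ for every proper overgroup $H$ of $x$) matches the paper's Proposition \ref{prop:conj1_reduction}, and the overall strategy — produce a prime $r\neq p$ and an $r$-element $y$ with $\la x,y\ra$ solvable but $\la x,y^g\ra$ nonsolvable — is the right one. But the two constructions you actually commit to both fail. First, the witness cannot always be a $2$-element: for $G=\PSL_2(p)$ with $p\equiv 3\pmod 4$ and $x$ of order $p$, the unique maximal overgroup of $x$ is the Borel subgroup of order $p(p-1)/2$, which has \emph{odd} order, so $\la x,i\ra$ is nonsolvable for every involution $i$ and the hypothesis is vacuous for $r=2$; the paper is forced to take $r$ an odd divisor of $(p-1)/2$. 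Odd-order witnesses are likewise needed in the sporadic cases where $N_S(\la x\ra)$ is a Frobenius group with odd cyclic complement, and in the exceptional groups where $x$ lies only in the normalizer of a maximal torus. Second, in the factor-permutation case an element ``built from the $p$-cycle permutation of factors together with an involution inside one factor'' has order divisible by $p$ (or $2p$), so it is not an $r$-element for any $r\neq p$. The paper instead takes $y=(a,a,\dots,a)$ diagonal in $N\cong S^k$, which commutes with the $p$-cycle $x=\tau$ (so $\la x,y\ra$ is solvable), and conjugates it by $(t,1,\dots,1)$ so that $\la y^g,(y^g)^x\ra$ projects onto the nonsolvable $\la a^t,a\ra$.

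The larger gap is that ``unique maximal overgroup of $\la x\ra$ plus a normalizing $2$-element'' is only one of three mechanisms the paper needs, and the case analysis you defer as the ``main obstacle'' is the bulk of the proof (Lemmas \ref{lem:PSL2}--\ref{lem:F4}). The other two mechanisms are: (i) showing $x$ normalizes but does not centralize a proper nonsolvable subgroup $H$, which contradicts $x\in R(H)$ coming from minimality — this, via parabolic and Levi-subgroup arguments (Lemma \ref{lem:2 parabolic}) and Guest's tables, disposes of all unipotent classes and most semisimple ones; and (ii) exhibiting two \emph{conjugate} involutions $i,j$ with $\la x,i\ra$ solvable and $\la x,j\ra$ nonsolvable, used for field automorphisms, $\PSL_2(q)$, ${}^2{\rm B}_2(q)$ and ${}^2{\rm G}_2(q)$. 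Mechanism (iii), the unique-overgroup argument, applies only when $C_{G_0}(x)$ is a maximal torus and $N_G(\la x\ra)$ is the sole maximal subgroup containing $x$. Finally, the references \cite{TZ04,TZ05,GNT} you invoke are what the paper uses for the real-element theorems in Section \ref{sec2}, not here; the essential inputs for this theorem are \cite{Guest10,Guest12} together with the maximal-subgroup data of \cite{BHR,BBGH}. So the proposal is a sound outline of the reduction, but the substantive part of the argument is missing and the concrete steps supplied do not survive contact with the actual cases.
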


In the next proposition, we give a reduction of Theorem \ref{th:radical} to almost simple groups and eliminate the sporadic simple groups and the alternating groups. Recall that the Fitting subgroup of $G$, denoted by $F(G),$ is the largest nilpotent normal subgroup of $G$, the layer $E(G)$ is a subgroup generated by all components of $G$, where $L$ is a component of $G$ if $L$ is a quasisimple subnormal subgroup of $G$ and finally, the generalized Fitting subgroup of $G$, denoted by $F^*(G)$, is defined by $F^*(G)=F(G)E(G)$. We write $\pi(G)$ for the set of all distinct prime divisor of $|G|.$ We include the case $|x|=3$ in the reduction result below.
 
\begin{prop}\label{prop:conj1_reduction} 
Let the pair $(G,x)$ be a counterexample to Theorem \ref{th:radical}  with $|G|$ minimal. Then 

\begin{enumerate}
\item[${\rm (i)}$] $G$ is an almost simple with simple socle $S$ and $G=S\la x\ra.$
\item[${\rm (ii)}$] If $H$ is a proper subgroup of $G$ containing $x$ then $x\in R(H)$.
\item[${\rm (iii)}$] $S$ is a finite simple group of Lie type.
\end{enumerate}

\end{prop}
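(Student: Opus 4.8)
The plan is to run a minimal-counterexample argument and extract the three conclusions in the order (ii), then (i), then (iii). Write $(\star)$ for the hypothesis of Theorem~\ref{th:radical} imposed on the pair $(G,x)$. The first observation is that $(\star)$ is inherited by subgroups containing $x$: if $x\in H\leq G$, then for every prime $r\neq p$ and every $r$-element $y\in H$ with $\la x,y\ra$ solvable, $(\star)$ forces $\la x,y^g\ra$ solvable for all $g\in G$, in particular for all $g\in H$. Thus $(H,x)$ satisfies the hypothesis of Theorem~\ref{th:radical}, and if $H<G$ then $|H|<|G|$, so minimality gives $x\in R(H)$. This is exactly (ii), and I would prove it first since it is used repeatedly below. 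Next I would reduce to $R(G)=1$. Setting $K=R(G)$ and passing to $\overline G=G/K$, $\overline x=xK$, note that $\overline x=1$ would give $x\in K=R(G)$, contradicting that $(G,x)$ is a counterexample, so $\overline x$ has order $p$. Every $r$-element of $\overline G$ lifts to an $r$-element of $G$ (take the $r$-part of a preimage), and since $K$ is solvable, solvability of $\la\overline x,\overline y\ra$ forces solvability of $\la x,y\ra$; applying $(\star)$ and projecting shows $(\overline G,\overline x)$ satisfies the hypothesis. As $|\overline G|<|G|$, minimality yields $\overline x\in R(\overline G)=R(G/R(G))=1$, a contradiction; hence $R(G)=1$.

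With $R(G)=1$ the socle $N=\soc(G)$ is a direct product $S_1\times\cdots\times S_m$ of nonabelian simple groups and $C_G(N)=1$, since any nontrivial normal subgroup meets $N$. Applying (ii) to $M=N\la x\ra$: if $M<G$ then $x\in R(M)$, but $R(M)\cap N=1$ (a solvable normal subgroup of $N$ is trivial), so $R(M)\leq C_G(N)=1$ and $x=1$, impossible; hence $G=N\la x\ra$. It remains to show $m=1$. As $\la x\ra$ permutes the $S_i$ in orbits of length $1$ or $p$, I split into two cases. If some orbit has length $p$, say $x$ cyclically permutes $S_1,\dots,S_p$, I choose a prime $b\neq p$ dividing $|S|$ and a nontrivial $b$-element $t\in S$, and set $y=(t,\dots,t,1,\dots,1)$, diagonal on the orbit. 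Then $y^x=y$, so $\la x,y\ra$ is abelian, hence solvable; but for $g=(g_1,\dots,g_p,1,\dots)\in N$ with $\la t^{g_1},\dots,t^{g_p}\ra$ nonsolvable (possible since $S$ is generated by conjugates of the nontrivial element $t$), the projection to $S_1$ of $\la x,y^g\ra\cap N$ is nonsolvable, so $\la x,y^g\ra$ is nonsolvable, contradicting $(\star)$. Therefore $x$ normalizes each $S_i$, so each $S_i\trianglelefteq G$. If $m\geq2$, then $H=S_1\la x\ra$ is a proper subgroup containing $x$ (else the nonabelian simple $S_2$ would embed in the cyclic group $H/S_1$), so $x\in R(H)$ by (ii); since $R(H)\cap S_1=1$ this forces $x\in C_G(S_1)$, and symmetrically $x$ centralizes every $S_i$, giving $x\in C_G(N)=1$, a contradiction. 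Hence $m=1$: $N=S$ is simple, $C_G(S)=1$ and $G=S\la x\ra$, which is (i).

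For (iii) I would rule out $S$ sporadic, the Tits group, and alternating. The cleanest lever is the rigidity supplied by (ii): if $x$ lies in any proper nonsolvable subgroup $H\leq G$ with $x\notin R(H)$ — for example a nonabelian simple subgroup of $S$ containing the nontrivial element $x$ — then (ii) is violated, so no such configuration occurs in a minimal counterexample. For the sporadic groups and the Tits group this can be checked by a finite computation in \cite{GAP}, exhibiting for each admissible $p=|x|$ either such a subgroup $H$ or a direct witness $(r,y)$ to the failure of $(\star)$. For $S=\Alt_n$ I would argue as in \cite{DGHP} (Proposition~3.3 and Lemma~3.2) and in the style of Lemma~\ref{lem:inva-gen}: using two primes in the interval $[n/2,n]$ one produces an $r$-element $y$ with $\la x,y\ra$ solvable but $\la x,y^g\ra$ nonsolvable for a suitable $g$, again contradicting $(\star)$; together these force $S$ to be of Lie type.

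The \emph{main obstacle} is part (iii), and within it the alternating groups. The witness construction in the case $m\geq2$ is clean because the component structure does the work, but with a single alternating socle one must handle $p$-elements of arbitrary cycle type, and in particular $p$-elements of full support such as a $p$-cycle of prime degree $n=p$, which need not lie in any proper nonsolvable subgroup and so are invisible to the (ii)-based mechanism. For these I expect to need the number-theoretic input on primes in short intervals combined with a Jordan/primitivity analysis to force a nonsolvable conjugate $\la x,y^g\ra$ — precisely the delicate point that already arises in \cite{DGHP}.
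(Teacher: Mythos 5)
Your treatment of (ii), of $R(G)=1$, and of the reduction $G=\soc(G)\la x\ra$ with a simple socle follows essentially the same route as the paper (the paper additionally records $G=\la x^G\ra$ and normalizes $x=(\rho,1,\dots,1)\tau$ with $x^p=(\rho,\dots,\rho)=1$ to get $\rho=1$, which is the normalization your ``$y^x=y$'' tacitly requires). There are, however, two genuine gaps. First, in the wreath-product step your witness needs $p$ conjugates $t^{g_1},\dots,t^{g_p}$ of a nontrivial $b$-element generating a nonsolvable subgroup, and the justification ``possible since $S$ is generated by conjugates of $t$'' does not deliver this: generation of $S$ by \emph{all} conjugates of $t$ gives no bound on how many conjugates are needed before a nonsolvable subgroup appears (for an involution, any two conjugates generate a dihedral group). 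The claim is true, but it rests on the solvable Baer--Suzuki theorems: either Guest's Theorem $A^*$ (two conjugates suffice for an $r$-element with $r\ge 5$ not in the radical) when $|S|$ has a prime divisor $r\ge 5$ with $r\ne p$, or the four-conjugates theorem of Gordeev--Grunewald--Kunyavskii--Plotkin, supplemented by a finite check of the simple groups with $\pi(S)=\{2,3,p\}$ --- which is exactly how the paper closes this step.

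Second, part (iii) is only sketched, and the route you propose for the alternating groups does not reach the hard case you yourself isolate. The DGHP two-primes-in-$[n/2,n]$ lemma produces a pair of primes $p',q'$ for which \emph{every} $\la a,b\ra$ with $|a|=p'$, $|b|=q'$ is nonsolvable; that is the wrong shape of statement here, since your $x$ has a prescribed order $p$ (possibly equal to $n$) and you must exhibit an $r$-element $y$ with $\la x,y\ra$ solvable but some $\la x,y^g\ra$ nonsolvable. The paper instead argues: if $x$ does not have full support or $n$ is composite, then $x$ lies in a point stabilizer $\Alt_{n-1}$ or an intransitive set stabilizer whose solvable radical cannot contain $x$, contradicting (ii); and if $n=p$ and $x$ is a $p$-cycle, it invokes the classification of maximal overgroups of a full $p$-cycle to conclude that either $x$ lies in a nonsolvable $\PSL_d(q)<\Alt_p$ (again contradicting (ii)) or $x$ lies in a \emph{unique} maximal subgroup $N_S(\la x\ra)\cong C_p{:}C_{(p-1)/2}$, where $(p-1)/2$ has an odd prime divisor $r$, yielding $y$ of order $r$ with $\la x,y\ra$ solvable and $\la x,y^g\ra=S$ once $y^g$ leaves that maximal subgroup. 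Without an argument of this kind (or an equivalent), conclusion (iii) is not established.
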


\begin{proof}
Let the pair $(G,x)$ be a counterexample to Theorem \ref{th:radical}  with $|G|$ minimal. Then $x$ is an element of prime order $p>2$ and $x\not\in R(G)$. Moreover, for all primes $r\neq p$ and for all $r$-elements $y\in G$, if $\la x,y\ra$ is solvable, then $\la x,y^g\ra$ is solvable for all $g\in G$. We call the latter property $(**)$ for brevity.  

\smallskip
(1) Observe first that if $H$ is any proper subgroup of $G$ containing $x$, then the pair $(H,x)$ also satisfies property (**). Thus by the minimality  of $|G|$, we have $x\in R(H)$, proving (ii).

\smallskip
(2) We claim  that $G=\la x^G\ra$. Let $N=\la x^G\ra\unlhd G.$ Assume by contradiction that $N\neq G.$ By (1) above, $x\in R(N)$. Since $R(N)$ is characteristic in $N$ and $N\unlhd G,$ we see that $R(N)\unlhd G$, hence $x\in R(N)\subseteq R(G)$, which is a contradiction.

\smallskip
(3) $R(G)=1$. Let $N=R(G)$ and assume that $N>1.$  Since $x\not\in N$, $xN$ has order $p$ in $G/N$. Let $r\neq p$ be a prime and let $yN\in G/N$ be an $r$-element such that $\la xN,yN\ra$ is solvable. Replace $y$ by its power, we may assume that $y\in G$ is an $r$-element. By the hypothesis, $\la xN,yN\ra=\la x,y\ra N/N$ is solvable and since $N$ is solvable, we deduce that $\la x,y\ra$ is solvable. Thus $\la x,y^g\ra$ is solvable for all $g\in G$ and hence $\la xN,(yN)^{gN}\ra$ is solvable for all $gN\in G/N$. This implies that $(G/N,xN)$ satisfies property $(**)$ and thus $xN\in R(G/N)$. However, $R(G/N)=1$ as $N=R(G)$, we obtain a contradiction. Thus $R(G)=1$.

\smallskip
(4) $G=N\la x\ra$, where $N$ is a unique  minimal normal subgroup of $G$ and it is nonabelian. 
Since $R(G)=1$, we know that $F^*(G)=E(G)$ is a direct product of nonabelian minimal normal subgroups of $G$. Let $N$ be such a minimal normal subgroup. Then $A:=N\la x\ra$ is a subgroup of $G$ containing $x$.  Assume that $A\neq G.$ By (1), $x\in R(A)$ by the minimality of $G$. However, as $R(A)\cap N=1$, $[R(A),N]=1$ and thus $x\in R(A)\leq C_G(N)\unlhd G$ which forces $G=C_G(N)$ or $N\leq Z(G)$, which is impossible. Therefore $G=N\la x\ra$ as wanted.

Assume by contradiction that $G$ has another minimal normal subgroup $N_1\neq N.$  By (3), $N_1$ is nonabelian and $N_1\cap N=1$ and so $[N,N_1]=1.$ By the claim above, we also have $G=N_1\la x\ra$. If $x\in N$, then $G=N$ is a simple group. Thus $N=N_1$. Hence we assume that $x$ does not lie in $N$ nor $N_1$. In particular, $N\cap \la x\ra=1=N_1\cap \la x\ra$. Since $G=N\la x\ra=N_1\la x\ra$, we have $|N|=|N_1|$. Now $NN_1$ is a normal subgroup of $G$ so that $|NN_1|=|N|^2$ divides $|G|=p|N|$, which is impossible. 

\smallskip
(5) $G$ is an almost simple group with a simple socle $S$.
Since $N$ is a minimal normal nonabelian subgroup of $G$, $N\cong S^k$ for some simple group $S$ and $k\ge 1.$ If $k=1$, then $G$ is almost simple and we are done. So, assume $k\ge 2.$

In this case, $G$ embeds into $\Aut(N)\cong \Aut(S)\wr \Sym_k$. Replace $x$ and $G$ by their conjugates in $\Aut(N)$, we can write $x=(\rho,1,\dots,1)\tau$, where $\rho\in\Aut(S)$ and $\tau=(1,2,\dots,k)$ is a $k$-cycle. We can check that $x^p=(\rho,\rho,\dots,\rho)$.  As $|x|=p$, we have $\rho=1$ and thus $x=\tau.$

Let $r\neq p$ be an odd prime. Assume that there exists a nontrivial $r$-element  $ a\in S$ such that $\la a,a^t\ra$ is nonsolvable for some $t\in S.$ 

Let $y=(a,a,\dots,a)\in N.$ Then $y$ is an  $r$-element and $y^x=y$ so $\la x,y\ra$ is solvable.
Let $g=(t,1,1,\dots,1)\in N$. By the hypothesis, $\la x,y^g\ra$ is solvable. Observe that $y^g=(a^t,a,\dots,a)$ and $(y^g)^x=(a,a^t,a,\dots,a)$. Therefore, the projection of $\la y^g,(y^g)^x\ra$ to the first component of $N$ contains $\la a^t,a\ra$ which is not solvable. Hence $\la y^g,(y^g)^x\ra$ is not solvable and so $\la x,y^g\ra$ is not solvable as well. This contradiction proves our claim.

To finish the proof, we need to show that there exists an odd prime $r\neq p$ and an $r$-element $a\in S$ such that $\la a,a^t\ra$ is nonsolvable for some $t\in S.$

Now if $|S|$ has a  prime divisor $r$ with $5\leq r\neq p$, then the claim follows from \cite[Theorem $A^*$]{Guest10}. On the other hand, if no such primes $r$ exists, then $\pi(S)=\{2,3,p\}$. From the classification of finite simple groups with three distinct prime divisors (see, for example, \cite{Herzog}), $S$ is isomorphic to one of the following groups: $$\Alt_5,\Alt_6,\PSp_4(3)\cong\PSU_4(2), \PSL_2(7),\PSL_2(8),\PSU_3(3),\PSL_3(3),\PSL_2(17).$$
By using \cite{GAP}, we can always find an element $a\in S$ of order $3$ and $t\in S$ such that $\la a,a^t\ra$ is nonsolvable.

\smallskip
(6) $S$ is not  a sporadic simple group nor the Tits group. Assume that $S$ is a sporadic simple group or the Tits group. Since $|\Aut(S):S|\leq 2$ and $p$ is odd, we see that $G=S$.  Using the character tables of these simple groups as well as their maximal subgroups available in \cite{GAP}, for each odd prime $p$ and each  element $x$ of prime order $p$, we can show that there exists a maximal subgroup $H$ of $S$ containing $x$ but $x\not\in R(H)$, violating (1), except for the following pairs $(S,p):$ \[({\rm M}_{23},23), ({\rm Fi}_{24}',29), ({\rm J}_{4},43),({\rm J}_{4},29),({\rm Ly},67), ({\rm Ly},37),({\rm B},47),({\rm J}_{1},19),({\rm J}_{1},7).\]   
Except for the last pair, $x$ lies in a unique maximal subgroup $H=N_S(\la x\ra)$, which is a Frobenius group with kernel $\la x\ra$ and a cyclic complement $R$. Note that $R$ is not a $2$-group and so let $r$ be a prime divisor of $|R| $. Then $2<r\neq p$. Let $y$ be a nontrivial $r$-element in $R$. Then clearly $\la x,y\ra\leq H$ is solvable. Let $g\in G-H$. Then clearly $\la x,y^g\ra=S$, which is nonsolvable. For the remaining case, $N_S(\la x\ra)\cong C_7:C_6$ and we can find an element $y\in S$ of order $3$ and $g\in G$ such that $\la x,y^g\ra$ is nonsolvable while $\la x,y\ra$ is solvable.

\smallskip
(7) $S$ is not an alternating groups $\Alt_n$ with $n\ge 5.$ Suppose by contradiction that $S$ is an alternating group acting on a set $\Omega$ of size $n\ge 5.$ Since $|\Aut(S):S|$ is a power of $2$, we have that $G=S.$ We can use MAGMA \cite{magma} to eliminate the cases $5\le n\leq 24.$ We assume that $n\ge 25.$  

(7a) Assume first that $n=p$ is a prime. Then $n\ge 29$ and $x$ is a $p$-cycle. Moreover, $\la x\ra$ is a Sylow $p$-subgroup of $S$. As in the proof of Proposition 4.2 in \cite{BBGH}, if $p$ is  of the form $(q^d-1)/(q-1)$ for some prime power $q$ and some prime $d\ge 2$, then $x\in \PSL_d(q)$. Since $p\ge 29$, the pairs $(d,q)$ cannot be $(2,2)$ nor $(2,3)$ and thus $\PSL_d(q)$ is nonsolvable. Now assume that $p$ is not of that form, then $x$ lies in a unique maximal subgroup $H=N_S(\la x\ra)\cong C_p:C_{(p-1)/2}$. Note that $(p-1)/2$ cannot be a power of $2$ as otherwise, $p$ would have the form $(2^{2f}-1)/(2^f-1)$. Therefore, we can find an odd prime $r\neq p$ dividing $(p-1)/2$ and an element $y\in  H$ of order $r$. Clearly $\la x,y\ra\leq H$ is solvable. By choosing $g\in S$  $y^g\not\in H$, then $\la x,y^g\ra=S$ is nonsolvable.

(7b) Assume that $n$ is not a prime. Since $|x|=p\ge 3,$ $x$ is a product of $k$ disjoint cycles of length $p$, for some integer $k\ge 1$. If $kp<n$, then $x$ fixes some point $\alpha\in\Omega$ and so it lies in $H$, a point stabilizer of $\alpha$ in  $S$, so $H\cong \Alt_{n-1}$, which is a simple group and thus $x\in R(H)=1$, a contradiction. Hence $n=kp$. Since $n$ is not a prime, $k\ge 2.$ In this case, $x$ lies in the subgroup $H\leq \Alt_n$ which stabilizes a subset $\Gamma\subseteq \Omega$ of size $p$, so $x\in H=(\Sym_{n-p}\times \Sym_p)\cap \Alt_n.$  If $p=3$, then $R(H)$ is a cyclic group of order $3$ generated by a $3$-cycle and so cannot contain $x$. If $p\ge 5,$ then $n-p=(k-1)p\ge 5$ and thus $R(H)=1.$
\end{proof}

We now consider the finite groups of Lie type. We will follow the notation and definitions from \cite{GLS}. Assume that $G$ is an almost simple group with socle $G_0$, a finite simple group of Lie type defined over a field $\FF_q$ of size $q=r^f$, where $r$ is a prime, and $x\in G$ is an element of order $p\ge 5$ such that $G=G_0\la x\ra$. Write $G_0^*$ for the group of inner-diagonal automorphisms of $G_0.$
We follow closely the proofs of Theorem $A^*$ in \cite{Guest10} and Theorem 1.3  in \cite{Guest12}. In fact, these two papers already contain all the information needed to rule out these cases.

To handle the small Lie rank groups, we may need a couple of additional results. The next lemma is basically Lemmas 6.1 and 6.2 in \cite{King}.

\begin{lem}\label{lem:conjugates}
Let $G$ be a finite group and let $p$ be a prime divisor of $|G|$. Assume that $G$ has a cyclic Sylow $p$-subgroup, say $P$. Let $1\neq x\in P$. Let $M$ be a maximal subgroup of $G$ containing $x$.

\begin{enumerate}
\item[$(1)$] If $x$ lies in  $M$ and $M^g$ for some $g\in G$, then $mg\in N_G(\la x\ra)$ for some $m\in G.$
\item[$(2)$] The number of $G$-conjugates of $M$ containing $x$ is $|N_G(\la x\ra)|/|N_M(\la x\ra)|.$
\end{enumerate}
\end{lem}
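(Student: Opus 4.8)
The plan is to follow the standard argument for cyclic Sylow subgroups, whose crucial feature is that a cyclic $p$-group contains a \emph{unique} subgroup of each order dividing its own. Since $P$ is cyclic, every Sylow $p$-subgroup of $M$ lies in a conjugate of $P$ and is therefore cyclic; in particular each such Sylow subgroup contains exactly one subgroup of order $|x|$. This uniqueness is what drives both parts. Note that part $(1)$ will not use maximality of $M$ at all, only the cyclicity of the Sylow $p$-subgroups of $M$; maximality enters solely in the stabiliser computation of part $(2)$.

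For part $(1)$ I would first translate the hypothesis $x\in M\cap M^g$ into the statement that $M$ contains both $\la x\ra$ and its conjugate $\la x\ra^{g^{-1}}$ (fixing the convention $M^g=g^{-1}Mg$, so that $x\in M^g$ means $gxg^{-1}\in M$). These are two cyclic $p$-subgroups of $M$ of the same order $|x|$. Placing each inside a Sylow $p$-subgroup of $M$ and using that the Sylow $p$-subgroups of $M$ are conjugate in $M$, say by some $m\in M$, the uniqueness of the order-$|x|$ subgroup in a cyclic group forces $\la x\ra^{g^{-1}}=\la x\ra^{m}$. Conjugating by $m^{-1}$ and taking inverses gives $mg\in N_G(\la x\ra)$, which is exactly the assertion; in fact $m$ may be taken inside $M$.

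For part $(2)$ I would fix $x\in M$ and let $N:=N_G(\la x\ra)$ act by conjugation on the set $\Sigma$ of $G$-conjugates of $M$ that contain $x$. First I would check that $\Sigma$ is $N$-invariant: if $x\in M^g$ and $n\in N$, then $x^{n^{-1}}\in\la x\ra\leq M^g$, so $x\in (M^g)^n$. Next, part $(1)$ shows the action is transitive: any $M^g\in\Sigma$ satisfies $mg\in N$ for some $m\in M$, whence $M^g=M^{mg}$ lies in the $N$-orbit of $M$. Finally, the stabiliser of $M$ in $N$ is $N\cap N_G(M)$; since $M$ is maximal and non-normal it is self-normalising, so $N_G(M)=M$ and this stabiliser equals $N\cap M=N_M(\la x\ra)$. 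The orbit--stabiliser theorem then yields $|\Sigma|=|N_G(\la x\ra)|/|N_M(\la x\ra)|$, as required.

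The argument is essentially bookkeeping, so the only genuine points requiring care are fixing a consistent conjugation convention throughout and correctly invoking the cyclicity of the Sylow $p$-subgroups of $M$ to guarantee uniqueness of the subgroup of order $|x|$. The mildest subtlety I anticipate is the stabiliser computation in $(2)$: one must observe that the relevant maximal subgroups are self-normalising (the alternative $M\unlhd G$ being degenerate, since then $M$ is its own unique conjugate) so that $N\cap N_G(M)$ collapses to $N_M(\la x\ra)$.
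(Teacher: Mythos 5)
Your proof is correct and is the standard argument: the paper itself gives no proof of this lemma, merely citing Lemmas 6.1 and 6.2 of King, and your argument (uniqueness of the subgroup of order $|x|$ in a cyclic $p$-group, plus orbit--stabiliser for the conjugation action of $N_G(\la x\ra)$ on the conjugates of $M$ containing $x$) is essentially the one given there. One small caveat on the point you flag at the end: in the excluded case $M\unlhd G$ the number of conjugates containing $x$ is indeed $1$, but the stated formula $|N_G(\la x\ra)|/|N_M(\la x\ra)|$ need not equal $1$ (take $G=C_{15}$, $x$ of order $5$, $M=\la x\ra$), so part $(2)$ as written implicitly assumes $M$ is not normal --- which is the case in every application in the paper, where $M$ is a core-free maximal subgroup of an almost simple group.
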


The next lemma is \cite[Lemma 2.2]{GS}.
\begin{lem}\label{lem:2 parabolic}
Let $G_0$ be a simple group of Lie type and  suppose that $G_0\unlhd G\leq G_0^*$ and let $x\in G.$
\begin{itemize}
\item[$(a)$] If $x$ is unipotent, let $P_1$ and $P_2$ be distinct maximal parabolic subgroups of $G$ containing a common Borel subgroup of $G$ with unipotent radicals $U_1$ and $U_2$. Then $x$ is conjugate to an element of $P_i-U_i$ for some $i\in\{1,2\}.$
\item[$(b)$] If $x$ is semisimple, assume that $x$ lies in a parabolic subgroup of $G$. If the rank of $G_0$ is at least $2$, then there exists a maximal parabolic subgroup $P$ with a Levi complement $J$ such that $x$ is conjugate to an element of $J$ not centralized by any 
(possibly solvable) Levi component of $J$.

\end{itemize}
\end{lem}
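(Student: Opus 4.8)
The plan is to conjugate $x$ into a standard position relative to a fixed Borel subgroup $B$ and then read off the conclusion from the root-group structure, treating the unipotent and semisimple cases separately.

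For part $(a)$, I would first recall that every unipotent element is conjugate into the unipotent radical $U=R_u(B)$, so after replacing $x$ by a conjugate I may assume $x\in U$. Since $B\le P_i$, the radical $U_i=R_u(P_i)$ is the product of the positive root groups $U_\gamma$ for which $\gamma$ has positive coefficient on the node $\alpha_i$ deleted to form $P_i$; in particular $U_i\le U$. If $x\notin U_1$, then the image of $x$ in the Levi quotient $P_1/U_1$ is nontrivial, so $x\in P_1\setminus U_1$ and I am done (symmetrically for $U_2$); the only remaining case is $x\in U_1\cap U_2$. Here $U_1\cap U_2=\prod_{\gamma\in S}U_\gamma$, where $S$ is the set of positive roots having positive coefficient on both deleted nodes $\alpha_1,\alpha_2$, and this is a normal subgroup of $U$ with $\alpha_1,\alpha_2\notin S$. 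I would then pick a minimal root $\gamma_0\in S$ lying in the support of $x$ (its root-group coordinate is canonical and nonzero) and conjugate by a Weyl-group representative $n_w$ chosen so that $w(\mathrm{supp}(x))$ consists of roots of $P_i$ for one fixed $i$ while $w\gamma_0$ lands in the span of $\Delta\setminus\{\alpha_i\}$; then $x^{n_w}\in P_i$ has nontrivial image in the Levi $L_i$, producing a conjugate in $P_i\setminus U_i$.

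For part $(b)$, a semisimple element lying in a parabolic subgroup is conjugate into a Levi complement (its image in the Levi quotient is semisimple and lifts), so I may assume $x$ lies in a standard Levi $L_I$ with $I\subsetneq\Delta$; choosing $I$ minimal with this property, no simple factor of $L_I$ centralizes $x$, for otherwise $x$ would lie in the centralizer of that factor and hence in a strictly smaller standard Levi, contradicting minimality (here one must be mildly careful with the central torus and the isogeny type of $L_I$). Since $x\ne 1$ is non-central we have $I\ne\emptyset$, and since $x$ lies in a proper parabolic we have $I\subsetneq\Delta$. I would then delete a single node $\alpha\notin I$ chosen so that every connected component of the remaining diagram $\Delta\setminus\{\alpha\}$ still meets $I$; such a node exists when the rank is at least $2$ by an elementary analysis of the tree-like Dynkin diagram, with the handful of small configurations checked directly. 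Setting $J=L_{\Delta\setminus\{\alpha\}}$ gives the Levi of a maximal parabolic with $L_I\le J$, and each simple factor $L_C$ of $J$ contains a full connected component of $I$ on which $x$ acts non-centrally; hence no (possibly solvable) Levi component of $J$ centralizes $x$, as required.

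The main obstacle in both parts is the root-combinatorial step. In $(a)$ it is the verification that a single Weyl-group conjugation can simultaneously keep the support of a general (not merely root-group) unipotent element inside the roots of one parabolic $P_i$ and push its lowest surviving coordinate into the Levi $L_i$; controlling the higher coordinates, rather than just the case of a single root subgroup, is the delicate point and may require iterating simple reflections or an induction on the rank. In $(b)$ the analogous difficulty is purely combinatorial: reconciling the demand that $P$ be maximal (delete exactly one node) with the demand that $x$ meet every factor of its Levi, which forces the careful choice of which node to delete together with a separate check of the small-rank Dynkin diagrams.
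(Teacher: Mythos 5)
A preliminary remark: the paper does not prove this lemma at all --- it is quoted verbatim from Guralnick and Saxl (the sentence preceding it reads ``The next lemma is [GS, Lemma 2.2]''), so your proposal is competing with the argument in that reference rather than with anything in this paper. On the merits, your reduction in part (a) to the case $x\in U_1\cap U_2$ is correct and easy, but the step that carries the whole proof --- producing a single Weyl-group representative $n_w$ which simultaneously keeps the entire support of $x$ among the roots of one fixed $P_i$ and moves a minimal root of that support into the Levi $L_i$ --- is only asserted, and you yourself flag it as the delicate point. As written this is not a proof: the ``support'' of a general unipotent element is canonical only modulo the commutator filtration, conjugation by $n_w$ reshuffles the root-group coordinates through the Chevalley commutator relations, and in the twisted groups the root subgroups are not one-parameter and the relative root system may be non-reduced (type ${}^2\mathrm{A}_{2n}$), so the bookkeeping you defer is exactly where the content of (a) lies.

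Part (b) contains an outright error: you claim that since $x\neq 1$ is non-central, the minimal $I$ with $x$ conjugate into $L_I$ is nonempty. This is false --- every semisimple element of the maximally split torus has $I=\emptyset$ --- and in that case your selection rule ``delete a node $\alpha\notin I$ so that every component of $\Delta\setminus\{\alpha\}$ meets $I$'' is unsatisfiable, while your justification that each factor of $J$ contains a component of $I$ on which $x$ acts non-centrally becomes vacuous. The case is not degenerate: for $x=\mathrm{diag}(a,a,b,b)$ in $\SL_4(q)$ with $a\neq b$ one has $I=\emptyset$, the maximal Levi $L_{\{\alpha_1,\alpha_3\}}$ is centralized factor-by-factor by $x$, and $L_{\{\alpha_1,\alpha_2\}}$ is not --- so the choice of deleted node genuinely matters and must be governed by the set of simple roots $\beta$ with $[x,U_{\pm\beta}]\neq 1$ (which is nonempty for non-central $x$, and for which your tree argument on the Dynkin diagram can then be repeated), not by $I$. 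One must also check non-centralization against the finite Levi components, including the solvable ones such as $\SL_2(2)$ and $\SL_2(3)$, which is why the statement carries the parenthetical caveat; your argument does not address this. As it stands, neither half of the proposal constitutes a complete proof.
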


We first eliminate the cases when $G_0\cong \PSL_2(q)$ or ${}^2{\rm B}_2(q)$.
\begin{lem}\label{lem:PSL2}
The almost simple groups $G$ with socle $\PSL_2(q)$ where $q=r^f$, $r$ is a prime and $f\ge 1$, is not a minimal counterexample to Theorem \ref{th:radical}.
\end{lem}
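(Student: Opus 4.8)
The plan is to assume that $(G,x)$ is a minimal counterexample to Theorem~\ref{th:radical} with socle $G_0=\PSL_2(q)$ and to derive a contradiction from the structure supplied by Proposition~\ref{prop:conj1_reduction}. Thus $G=G_0\la x\ra$, $R(G)=1$, the element $x$ has prime order $p\geq 5$, every proper subgroup $H$ with $x\in H$ satisfies $x\in R(H)$ (this is property~(ii) of the proposition), and $(G,x)$ satisfies the conjugacy property $(**)$ introduced in its proof. First I would clear the finitely many groups with $q$ below an explicit bound by direct computation in \cite{GAP} and \cite{magma}, so that Dickson's description of the subgroups of $\PSL_2(q)$ (equivalently Tables~8.1--8.2 of \cite{BHR}) may be used freely and every relevant subfield subgroup $\PSL_2(q_0)$ or $\PGL_2(q_0)$ is nonsolvable, i.e.\ has $q_0\geq 4$.

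The engine of the argument is a dichotomy on the maximal overgroups of $\la x\ra$. In the first branch, suppose $x$ lies in a nonsolvable proper subgroup $H$ with $x\notin R(H)$; the typical instance is a subfield subgroup $H$ with socle $\PSL_2(q_0)$ and $q_0\geq 4$, for which $R(H)=1$ while $x\neq 1$, so $x\notin R(H)$. This already contradicts property~(ii). In the second branch, suppose every maximal subgroup containing $x$ is solvable and, in addition, $\la x\ra$ lies in a \emph{unique} maximal subgroup $M$. Then I would pick an $r$-element $y\in M$ with $r\neq p$ --- an involution inverting $x$ when $M$ is dihedral, or a nontrivial element of a Frobenius complement when $M$ is a Borel subgroup --- so that $\la x,y\ra\leq M$ is solvable. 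Since $G_0$ is simple and $y\neq 1$, some conjugate $y^g$ lies outside $M$; as $M$ is the only maximal subgroup containing $x$, any proper overgroup of $x$ is contained in $M$, so $\la x,y^g\ra=G_0$, which is nonsolvable. This contradicts $(**)$.

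It then remains to route each element $x$ of order $p$ through this dichotomy. If $x$ is unipotent then $p=r\geq 5$ and $\la x\ra$ lies in a unique point-stabiliser Borel subgroup: for $q=p$ this is the unique maximal overgroup and the second branch applies, while for $f>1$ the element $x$ also lies in a subfield subgroup $\PSL_2(r^{f_0})$ with $r^{f_0}\geq 5$, so the first branch applies. If $x$ is semisimple with $p\mid q+1$, its centraliser is the cyclic nonsplit torus, whose only maximal overgroup is the dihedral normaliser $D_{2(q+1)/d}$ (with $d=\gcd(2,q-1)$); absent a subfield overgroup this is unique and the second branch applies, and otherwise the first branch does. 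The two delicate cases, which I expect to be the main obstacle, are the split semisimple case $p\mid q-1$, where $\la x\ra$ lies in two Borel subgroups and one dihedral subgroup so that uniqueness fails in the second branch, and the field-automorphism case $x\notin G_0$ (so $p\mid f$, $q=q_0^p$ and $C_{G_0}(x)=\PSL_2(q_0)$), where $x$ centralises a nonsolvable subgroup and hence lies in its solvable radical. For both I would, following Guest's analysis in \cite{Guest10,Guest12}, exhibit an involution $t$ with $\la x,t\ra$ nonsolvable; since all involutions of $G_0$ are conjugate in $G$, writing $t=y^g$ with $y$ an involution inverting $x$ (so that $\la x,y\ra$ is dihedral, hence solvable) then violates $(**)$. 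The existence of such a generating involution is a $(2,p)$-generation statement for $\PSL_2(q)$ that is precisely the information already contained in \cite{Guest10,Guest12}, the finitely many residual small $q$ being confirmed by machine.
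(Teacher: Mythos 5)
Your proposal is correct and follows essentially the same route as the paper's proof: both rest on property (ii) of Proposition \ref{prop:conj1_reduction}, on Guest's $(2,p)$-generation results from \cite{Guest10,Guest12} combined with the fact that $G$ has a single class of involutions to violate $(**)$, and on the unique-maximal-overgroup argument for the residual unipotent case $G=\PSL_2(p)$ with the Borel subgroup. The only minor caveat is that in the nonsplit semisimple case the dihedral normalizer need not be the unique maximal overgroup even absent subfield subgroups (an $\Alt_5$ can intervene when $p=5$), but such an overgroup is nonsolvable with trivial radical, so your first branch --- or simply the involution argument, which applies whenever $x$ is inverted by an involution --- still disposes of it.
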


\begin{proof}
Let $G_0=\PSL_2(q)$ be the socle of $G$ and let $x\in G$ be an element of order $p\ge 5$ such that $(G,x)$ satisfies the hypothesis of Theorem \ref{th:radical}  with $|G|$ minimal. Then $(G,x)$ satisfies the conclusion of Proposition \ref{prop:conj1_reduction}. 
Since $|x|=p$ is odd, either $x$ is a field automorphism of $G_0$ or $x\in G_0.$

(a) Assume $x$ is a field automorphism. Let $C=C_{G_0}(x)$. Then $C\cong \PSL_2(q_0)$. Hence there exists an involution $i\in C$ such that $\la x,i\ra$ is solvable. Note that $G$ has only one class of involutions since $G_0$ has only one class of involutions and $|G:G_0|$ is odd. By  Lemma 3.3 \cite{Guest12}, there exists an involution $j\in G_0$ such that $\la x,j\ra$ is not solvable. However, as $i$ and $j$ are $G_0$-conjugate, we obtain a contradiction.

(b) $x\in G_0$ and hence $G=G_0.$ We can assume $q\ge 7$ and $q\neq 9.$

 If $\la x,i\ra$ is solvable for some involution $i\in G$, then we are done by invoking \cite[Lemma 3.3]{Guest12} again as $G_0$ has one class of involutions. Otherwise, $p\mid q$ and so $x$ is a transvection and thus we can assume $x\in \PSL_2(p)$. Since $p\ge 5$ and $(G,x)$ is a minimal counterexample, we deduce that $G=\PSL_2(p)$ and $x$ lies in  the Borel subgroup $B =UT$, where $U=\la x\ra$ is a Sylow $p$-subgroup of $G$ and $T$ is cyclic of order $(p-1)/2$.  Since $\la x,i\ra$ is not solvable for all involutions $i\in G,$ we must have that $p\equiv 3$ mod 4. Now by \cite{BBGH} or \cite{BHR} and Lemma \ref{lem:conjugates}, $B$ is the unique maximal subgroup of $G$ containing $x$. Thus if $r$ is any prime divisor of $|T|$ and $y\in T$ is an element of order $r$, then $\la x,y\ra$ is solvable but $\la x,y^g\ra$ is not solvable for any $g\in G$ with $y^g\not\in B.$
\end{proof}

\begin{lem}\label{lem:Sz}
The almost simple groups $G$ with socle $G_0={}^2{\rm B}_2(q)$, where $q=2^f$ and $f\ge 3$ is odd, is not a minimal counterexample to Theorem \ref{th:radical}.
\end{lem}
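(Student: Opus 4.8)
The plan is to show that if $(G,x)$ were such a minimal counterexample, then the property $(**)$ established in the proof of Proposition \ref{prop:conj1_reduction} would fail, giving a contradiction. By that proposition $G=G_0\la x\ra$ with $G_0={}^2{\rm B}_2(q)$, $x\notin R(G)$, and, crucially, $x\in R(H)$ for every proper subgroup $H$ of $G$ containing $x$. Since $\Out(G_0)$ is cyclic of odd order $f$, the index $|G:G_0|$ is odd, so every involution of $G$ lies in $G_0$, and all of them form a single $G$-class (Suzuki groups have one class of involutions). As $p\ge5$ is odd and $G_0$ has characteristic $2$, either $x$ induces a nontrivial field automorphism of $G_0$ (so $p\mid f$) or $x\in G_0$ is semisimple; in the latter case $G=G_0$ and $p$ divides $(q-1)(q^2+1)=(q-1)(q-\sqrt{2q}+1)(q+\sqrt{2q}+1)$, where $\sqrt{2q}=2^{(f+1)/2}$.

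First I would dispose of the field-automorphism case exactly as in Lemma \ref{lem:PSL2}(a). Here $C_{G_0}(x)\cong{}^2{\rm B}_2(2^{f/p})$, which contains an involution $i$; since $[x,i]=1$ the group $\la x,i\ra$ is abelian, hence solvable. On the other hand, by \cite[Lemma 3.3]{Guest12} there is an involution $j\in G_0$ with $\la x,j\ra$ nonsolvable. Because $i$ and $j$ are conjugate in $G$, taking $r=2$ and $y=i$ contradicts $(**)$.

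For $x\in G_0$ I would separate according to which torus contains $x$, using Suzuki's list of maximal subgroups: the Borel $[q^2]:C_{q-1}$, the dihedral group $\textrm{D}_{2(q-1)}$, the two Frobenius groups $C_{q\pm\sqrt{2q}+1}:4$, and the subfield subgroups ${}^2{\rm B}_2(q_0)$. If $x$ lies in a \emph{simple} subfield subgroup, then $x\in R({}^2{\rm B}_2(q_0))=1$ by the reduction, a contradiction, so I may assume $x$ lies in no simple subfield subgroup. When $p\mid q\pm\sqrt{2q}+1$, the element $x$ sits in a self-centralizing cyclic torus $T$, so $N_G(\la x\ra)\le N_G(C_G(x))=N_G(T)=:M=T:4$; by Lemma \ref{lem:conjugates}(2) exactly one conjugate of $M$ contains $x$, and pairwise coprimality of the three torus orders rules out the remaining maximal subgroup types, so $M$ is the unique maximal subgroup of $G$ containing $x$. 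Choosing a $2$-element $y\in M$ gives $\la x,y\ra\le M$ solvable, while $\la y^G\ra=G\not\le M$ yields a conjugate $y^g\notin M$, forcing $\la x,y^g\ra=G$; again $(**)$ fails for $r=2$. When $p\mid q-1$, the Borel is a Frobenius group, so $C_G(x)=T_1=C_{q-1}$ is self-centralizing and $x$ is inverted by an involution $y_0\in\textrm{D}_{2(q-1)}=N_G(T_1)$, whence $\la x,y_0\ra$ is solvable. Here $x$ fixes exactly two points of the natural $2$-transitive action on $q^2+1$ points, so it lies in exactly two Borel subgroups, together with the single $\textrm{D}_{2(q-1)}$; each of these contains at most $q-1$ involutions, so at most $3(q-1)$ involutions lie in a maximal subgroup containing $x$. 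Since $G$ has $(q^2+1)(q-1)>3(q-1)$ involutions, some involution $j$ lies in no maximal subgroup containing $x$, whence $\la x,j\ra=G$ is nonsolvable; as $j$ is conjugate to $y_0$, this contradicts $(**)$ once more.

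The main obstacle, requiring the most care, is pinning down the maximal-subgroup geometry: establishing that $M=N_G(\la x\ra)$ is genuinely the \emph{unique} maximal overgroup of $x$ when $p\mid q^2+1$ (which rests on the self-centralizing property of the Coxeter-type tori, the pairwise coprimality of $q-1$, $q-\sqrt{2q}+1$, $q+\sqrt{2q}+1$, and the exclusion of simple subfield subgroups via the reduction), and bounding the involutions in the Borel and dihedral subgroups containing $x$ when $p\mid q-1$ (which uses that a regular semisimple element fixes exactly two points of the $2$-transitive action, hence lies in exactly two Borels). By contrast, the field-automorphism case is comparatively routine once \cite[Lemma 3.3]{Guest12} is invoked.
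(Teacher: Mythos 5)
Your proof is correct, and for the case $x\in G_0$ it takes a genuinely different route from the paper. The paper's argument is uniform and very short: in every case it locates an involution $i$ with $\la x,i\ra$ solvable (inside $C_{G_0}(x)\cong{}^2{\rm B}_2(2^{f/p})$ when $x$ is a field automorphism, and inside a solvable maximal overgroup from \cite[Table 8.16]{BHR} when $x\in G_0$, the simple subfield subgroups being excluded by minimality exactly as you do), and then invokes \cite[Lemma 3.12]{Guest12} to produce an involution $j$ with $\la x,j\ra$ nonsolvable, concluding from the single class of involutions. You handle the field-automorphism case the same way, but for inner $x$ you replace the appeal to Guest's lemma by a self-contained analysis of the maximal overgroups of $x$: uniqueness of $N_G(T)=T{:}4$ as the maximal overgroup when $p\mid q^2+1$ (via self-centralizing tori, coprimality of the torus orders, and Lemma \ref{lem:conjugates}), and a fixed-point/involution count when $p\mid q-1$ (using that ${}^2{\rm B}_2(q)$ is a Zassenhaus group, so $x$ lies in exactly two Borels, and that the $q^2$ Sylow $2$-subgroups each contribute $q-1$ involutions, far more than the $3(q-1)$ accounted for). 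What each approach buys: the paper's is shorter and defers the hard combinatorics to \cite{Guest12}; yours is longer but eliminates that black box and in the $p\mid q^2+1$ case yields the sharper conclusion $\la x,y^g\ra=G$. Your argument in that case also correctly uses an arbitrary $2$-element $y$ of the normalizer rather than an involution, which is all that property $(**)$ requires.
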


\begin{proof}
Note that $G_0$ and hence $G$ has only one class of involution and either $x$ is a field automorphism or $x\in G_0=G$. 
 If $x$ is a field automorphism, then $C_{G_0}(x)\cong {}^2{\rm B}_2(2^{f/p})$ and $C_{G_0}(x)$ contains an involution $i$ such that $\la x,i\ra$ is solvable. If $x\in G_0$, then $G=G_0$ and a similar claim holds by inspecting the maximal subgroups of $G_0$ in \cite[Table 8.16]{BHR} (note that $x$ does not lie in a simple subfield subgroup $ {}^2{\rm B}_2(2^e) $ with $3\leq e\mid f$ and $e<f$ by the minimality of $(G,x)$). Hence the lemma follows by using Lemma 3.12 in \cite{Guest12}.
\end{proof}

We next exclude the cases when $x$ is an outer automorphism of $G_0$ which is not an inner-diagonal automorphism. The argument is similar to Lemma 7 \cite{Guest10}.

\begin{lem}\label{lem:Outer}
Assume that $G_0$ is not isomorphic to ${}^2{\rm B}_2(2^f)$ with $f\ge 3$ odd nor $\PSL_2(q)$, and $x\in G-G_0$  is not an inner-diagonal automorphism of $G_0$, then $(G,x)$ is not a minimal counterexample to Theorem \ref{th:radical}.
\end{lem}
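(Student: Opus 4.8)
The plan is to show that the defining property of a minimal counterexample cannot hold for such a pair $(G,x)$. By Proposition \ref{prop:conj1_reduction} and the surrounding reduction we may assume $G=G_0\la x\ra$ is almost simple with $R(G)=1$, that $G_0$ is a finite simple group of Lie type over $\FF_q$, and that $x$ has order $p\ge 5$. Write $(**)$ for the hypothesis of Theorem \ref{th:radical}. Since $R(G)=1$ and $x\neq 1$, to prove that $(G,x)$ is not a minimal counterexample it suffices to violate $(**)$: that is, to exhibit a prime $r\neq p$, an $r$-element $y\in G$, and an element $g\in G$ with $\la x,y\ra$ solvable but $\la x,y^g\ra$ nonsolvable.

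First I would pin down the type of $x$. Since $x$ is outer but not inner-diagonal, its image modulo $\Inndiag(G_0)$ is a nontrivial field--graph automorphism. As $p\ge 5$ exceeds the order of every symmetry of a Dynkin diagram (which is at most $3$), this image can carry no graph part; hence $x$ is, up to $G_0^*$-conjugacy, a field automorphism of order $p$ (in the twisted case, a field automorphism of the twisted type). Its centralizer $C:=C_{G_0}(x)$ is then, by the standard description of the fixed points of a Steinberg endomorphism, a group of the same (possibly twisted) Lie type over the subfield $\FF_{q_0}$ with $q=q_0^{\,p}$. Having excluded $G_0\cong\PSL_2(q)$ and $G_0\cong{}^2{\rm B}_2(q)$ in Lemmas \ref{lem:PSL2} and \ref{lem:Sz}, the group $C$ has even order (indeed it is nonsolvable), so it contains an involution $y$. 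Taking $r=2$, the commuting elements $x$ and $y$ generate an abelian --- hence solvable --- group, which secures the solvable half of the required configuration.

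It remains to produce a conjugate of $y$ that generates a nonsolvable group with $x$. Here I would invoke the machinery of \cite{Guest10} and \cite{Guest12} --- precisely the tool applied through Lemmas 3.3 and 3.12 in the $\PSL_2$ and ${}^2{\rm B}_2$ cases --- which, for a field automorphism $x$ of odd prime order, furnishes an involution $j\in G_0$ with $\la x,j\ra$ nonsolvable. If $j$ is $G$-conjugate to the commuting involution $y$, say $y=j^h$ with $h\in G$, then $g=h^{-1}$ yields $\la x,y^g\ra=\la x,j\ra$ nonsolvable, contradicting $(**)$. Equivalently, the task reduces to checking that the $G$-class of $j$ meets $C=C_{G_0}(x)$, i.e. that some conjugate of $j$ centralizes $x$.

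The hard part will be exactly this matching of involution classes. Unlike $\PSL_2(q)$ and ${}^2{\rm B}_2(q)$, which have a single class of involutions and for which the coincidence is automatic, a general $G_0$ carries several classes, so one must verify that the nonsolvable-generating involution $j$ can be chosen inside (a conjugate of) the fixed-point subgroup $C_{G_0}(x)$. I would settle this type by type, analysing the centralizer of $j$ and the way $C_{G_0}(x)$ embeds in $G_0$, following the case division of \cite[Lemma 7]{Guest10}; the input from \cite{Guest12} guarantees, in each case, that $C_{G_0}(x)$ already contains an involution in the relevant $G_0^*$-class, so that the required $g$ exists and $(**)$ fails. This contradiction completes the argument.
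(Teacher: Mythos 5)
Your overall strategy differs from the paper's, and it has a genuine gap at its crux. The paper does not try to violate the hypothesis $(**)$ directly. Instead it exploits property (ii) of Proposition \ref{prop:conj1_reduction}: in a minimal counterexample, $x$ must lie in $R(H)$ for \emph{every} proper subgroup $H$ of $G$ containing $x$. Having reduced (via \cite[7.2]{GL}) to the case where $x$ is a standard field automorphism, the paper quotes \cite[Theorem 3.2.8]{GLS} to see that $x$ acts nontrivially as a field automorphism on a fundamental $\SL_2$-subgroup $K$ of $G_0$; since the untwisted rank is at least $2$, the subgroup $H=K\la x\ra$ is proper, and $x\notin R(H)$ because $x$ does not centralize the quasisimple normal subgroup $K$ modulo its center. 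That is the whole proof --- three lines, no involution classes, no case analysis.

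Your route --- produce an involution $y\in C_{G_0}(x)$ commuting with $x$, then find an involution $j$ with $\la x,j\ra$ nonsolvable, and match the two $G$-classes --- could in principle work, but the step you yourself flag as "the hard part" is exactly the step you do not carry out. For groups with several classes of involutions (all classical groups of large rank, most exceptional groups), the results of \cite{Guest10,Guest12} give you \emph{some} involution $j$ generating a nonsolvable subgroup with $x$, with no control over its $G_0^*$-class, and separately you need to know which classes meet $C_{G_0}(x)$. Verifying that these two sets of classes intersect is a nontrivial type-by-type computation that your proposal only promises to do; as written, the argument is a plan rather than a proof. (A smaller inaccuracy: $C_{G_0}(x)$ need not be nonsolvable --- e.g.\ $\PSU_3(2)$ arises as the fixed-point subgroup when $G_0=\PSU_3(2^p)$ --- though it does still have even order, so your choice of $y$ survives.) I would encourage you to notice that the minimality hypothesis is much stronger than "$(**)$ holds": it lets you localize the contradiction inside any convenient proper overgroup, which is what makes the paper's argument so short.
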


\begin{proof}
Since $p\ge 5$ and $x\not\in G_0^*,$ $x$ is a field automorphism.
Since the untwisted Lie rank of $G_0$ is at least $2$, by \cite[7.2]{GL}, we may assume that $x$ is a standard field automorphism. By \cite[Theorem 3.2.8]{GLS}, $x$ acts nontrivially as a field automorphism on a fundamental $\SL_2$-subgroup. However, this contradicts the minimality of the pair $(G,x)$.
\end{proof}

The next lemma is a modified version of  \cite[Lemma 3.2]{Guest12}. We use the notation $\GL_n^+(q)$ for $\GL_n(q)$ and $\GL_n^-(q)$ for $\textrm{GU}_n(q)$. The same convention applies to $\PSL_n^\epsilon(q)$ and $\PGL_n^\epsilon(q)$ for $\epsilon=\pm.$
\begin{lem}\label{lem:lift}
If $x\in G\leq \PGL_n^\epsilon(q)$ does not lift to an element of order $p$ in $\GL^\epsilon_n(q)$, then $(G,x)$ is not a minimal counterexample to Theorem \ref{th:radical}.

\end{lem}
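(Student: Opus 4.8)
The plan is to argue by contradiction, assuming $(G,x)$ is a minimal counterexample, so that all conclusions of Proposition \ref{prop:conj1_reduction} hold: $G$ is almost simple with socle $G_0 = \PSL_n^\epsilon(q)$, $G = G_0\la x\ra$, $|x| = p \ge 5$, the radical $R(G) = 1$, and --- writing $(**)$ for the hypothesis of Theorem \ref{th:radical} --- every proper subgroup $H \le G$ with $x \in H$ satisfies $x \in R(H)$. Since $G \le \PGL_n^\epsilon(q)$, $x$ is an inner-diagonal automorphism, so it is represented by a matrix in $\GL_n^\epsilon(q)$, and the first task is to pin down, from the non-lifting hypothesis, what $x$ looks like.

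First I would extract the numerical consequence of non-lifting. Let $Z = Z(\GL_n^\epsilon(q))$, of order $q - \epsilon 1$, and lift $x$ to a $p$-element $\hat x \in \GL_n^\epsilon(q)$ (replace an arbitrary lift by its $p$-part, which still projects onto $\la x\ra$ since $|x| = p$). Then $\hat x^p = \lambda I \in Z$, and the non-lifting hypothesis says precisely that $\lambda$ is not a $p$-th power in $Z$; this forces $p \mid q - \epsilon 1$ and, after multiplying $\hat x$ by a scalar, $|\lambda| = p$ and $|\hat x| = p^2$. In particular $p \neq r$ (the defining characteristic) and $x$ is semisimple. The eigenvalues of $\hat x$ are the $p$ roots $\mu_0\omega^j$ ($0 \le j < p$) of $\lambda$, with $\omega$ a primitive $p$-th root of unity; because $p^2 \nmid q - \epsilon 1$ (otherwise $\mu_0$ lies in the base field and $\mu_0^{-1}\hat x$ is a lift of order $p$), the root $\mu_0$ has degree $p$ over the base field and its Galois orbit is the full set of roots. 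Rationality of $\hat x$ then forces every eigenvalue to occur with one common multiplicity $m$, so $n = pm$, and $C_{\GL_n^\epsilon(q)}(\hat x) \cong \GL_m^\epsilon(q^p)$ with $\hat x$ lying in its centre. Passing to $G$, the upshot is that $x$ centralizes a subgroup $L$ with $L \cong \PSL_m^\epsilon(q^p)$ (the image of $\SL_m^\epsilon(q^p)$), that is, $\la x, L\ra = \la x\ra \times L$; this is the field-extension ($\mathcal{C}_3$) picture, and I expect the unitary case $\epsilon = -$ to run in parallel since $p \ge 5$ is odd.

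With this structure in hand I would split on $m$. If $m \ge 2$ then $L$ is nonsolvable, and I would derive a contradiction with $(**)$ rather than with clause (ii), because $x$ is central in every obvious overgroup (in particular in the field-extension subgroup) and so lies in its radical, giving no immediate contradiction with (ii). Instead, choose an involution $i \in L$: then $\la x, i\ra = \la x\ra \times \la i\ra$ is solvable. The methods of \cite{Guest12} (the solvable-version statements about pairs $(x,j)$ with $j$ an involution, already used in Lemmas \ref{lem:PSL2}--\ref{lem:Outer}) produce an involution $j \in G$ with $\la x, j\ra$ nonsolvable; if $j$ can be taken $G$-conjugate to $i$, then writing $j = i^g$ and applying $(**)$ with $y = i$ and $r = 2$ shows $\la x, i^g\ra = \la x, j\ra$ is solvable, a contradiction. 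If $m = 1$ then $n = p$ and $\hat x$ is a regular semisimple element of a Coxeter (Singer) torus: it lies in no proper parabolic and sits inside the solvable maximal subgroup $N_G(\bar T) \cong \bar T.p$; here I would instead pick a prime $r \neq p$ dividing $|\bar T|$ and an $r$-element $y \in N_G(\bar T)$ with $\la x,y\ra$ solvable, and use the overgroup count (Lemma \ref{lem:conjugates}, exactly as in the $\PSL_2$ and Suzuki cases) to find $g$ with $\la x, y^g\ra$ nonsolvable, again contradicting $(**)$.

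The eigenvalue computation is routine; the genuine obstacle is the final contradiction when $m \ge 2$, namely guaranteeing that the nonsolvable witness can be chosen in the same $G$-class as an element commuting with $x$. Since $\PSL_n^\epsilon(q)$ has several classes of involutions, matching classes is delicate, and this is precisely the point where one must lean on the detailed case analysis of \cite{Guest10, Guest12} --- replacing the involution by a more convenient prime-order element when the class of $i$ does not already witness nonsolvability --- together with a separate check of the small-rank configurations, the case $n = p$, and the unitary analogue of the field-extension centralizer.
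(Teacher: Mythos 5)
Your structural analysis of the non-lifting hypothesis is correct and matches what is really going on: $p\mid (q-\epsilon,n)$, every lift has $p$-part of order $p^2$ with scalar $p$-th power, the eigenvalues form a single Galois orbit of length $p$, $n=pm$, and $C_{\GL_n^\epsilon(q)}(\hat x)\cong \GL_m^\epsilon(q^p)$. But the endgame misses the one observation that makes the paper's proof a three-line argument, and the two case-by-case substitutes you offer both have real gaps. The paper's point is this: since $p\mid q-\epsilon$ and $p$ is odd, a Sylow $p$-subgroup of $\GL_n^\epsilon(q)$ lies in a monomial subgroup of type $(q-\epsilon)\wr\Sym_p$ (after the reduction to $n=p$ with $x$ irreducible, which is exactly your $m\ge 2$ versus $m=1$ dichotomy handled via clause (ii) applied to decomposition stabilizers). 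Because $\hat x$ has order $p^2$ while the base group $(q-\epsilon)^p$ has $p$-exponent $p$, the element $\hat x$ must project to a \emph{nontrivial} element of $\Sym_p$; and since $p\ge 5$, the solvable radical of $(q-\epsilon)\wr\Sym_p$ is exactly the base group. So $x$ lies in a proper overgroup $H$ (the image of this monomial subgroup) with $x\notin R(H)$, contradicting Proposition \ref{prop:conj1_reduction}(ii) directly. No appeal to property $(**)$, to involution classes, or to uniqueness of maximal overgroups is needed.

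Concretely, two of your steps fail. First, your assertion that clause (ii) gives ``no immediate contradiction'' because $x$ is central in every obvious overgroup is false: you only examined the field-extension subgroup $\GL_m^\epsilon(q^p)$, in whose image $x$ is indeed central, but $x$ also lies in the image of the imprimitive subgroup $(q-\epsilon)\wr\Sym_p$ (for $m\ge 2$, first pass through $\GL_p^\epsilon(q)\wr\Sym_m$), where it visibly permutes the blocks and so is outside the radical. Second, in the $m=1$ case your plan to imitate the $\PSL_2$ and Suzuki arguments relies on $N_G(\bar T)$ being the unique maximal overgroup of $x$; this is not true here, precisely because $p$ divides $q-\epsilon$ rather than being a ppd of $q^{\,n}-1$: the same element of order $p^2$ sits both in the Singer torus and (up to conjugacy) in the split-torus normalizer $(q-\epsilon)\wr\Sym_p$, so the overgroup count does not collapse to one class. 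Combined with the class-matching problem you yourself flag in the $m\ge 2$ case, the proposal as written does not close either branch, whereas the monomial-overgroup observation closes both at once.
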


\begin{proof} Let $(G,x)$ be a minimal counterexample to Theorem \ref{th:radical}  and assume that $x$ does not lift to an element of order $p$ in $\GL_n^\epsilon(q)$.
As in the proof of part (a) of Lemma 3.2 in \cite{Guest12}, we have $p\mid (q-\epsilon,n)$ and we can assume that $n=p$ and $x$ acts irreducibly on the natural module $V$. Moreover, a Sylow $p$-subgroup of $\GL_n^\epsilon(q)$ is contained in a subgroup of type $(q-\epsilon)\wr \Sym_p$ and $x$ is nontrivial in $\Sym_p.$
Since $p\ge 5,$ $x$ cannot lie in the solvable radical of the subgroup of type $(q-\epsilon)\wr \Sym_p$ above.
\end{proof}

\begin{lem}\label{lem:unipotent}
If $x\in G$ is unipotent, then $(G,x)$ is not a minimal counterexample to Theorem \ref{th:radical}.
\end{lem}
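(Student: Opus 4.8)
The plan is to derive a contradiction from the assumption that $(G,x)$ is a minimal counterexample with $x$ unipotent, by exhibiting a proper subgroup $H\leq G$ with $x\in H$ but $x\notin R(H)$; this contradicts Proposition \ref{prop:conj1_reduction}(ii). First I would record the reductions forced by unipotency. Since a unipotent element has order a power of the defining characteristic $r$ and $|x|=p$ is prime, we get $p=r$; as $p\geq 5$ this forces $r\geq 5$ and excludes every group of characteristic $2$ or $3$ (in particular the Suzuki and Ree families). Because $[G_0^*:G_0]$ is coprime to $r$, the unipotent element $x$ lies in $G_0$, so $G=G_0$. Combined with Lemma \ref{lem:PSL2}, only two situations survive: either $G_0$ has (twisted) Lie rank at least $2$, or $G_0\cong \PSU_3(q)$ with $q=r^f$ and $r\geq 5$.

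For the generic case, assume $G_0$ has Lie rank at least $2$. Then $G$ contains two distinct maximal parabolic subgroups $P_1,P_2$ sharing a common Borel, with unipotent radicals $U_1,U_2$. By Lemma \ref{lem:2 parabolic}(a), after replacing $x$ by a conjugate (which again yields a minimal counterexample, since all the hypotheses are conjugation invariant), we may assume $x\in P_i-U_i$ for some $i$. As $P_i$ is a proper subgroup containing $x$, Proposition \ref{prop:conj1_reduction}(ii) gives $x\in R(P_i)$. Projecting to the Levi quotient $L_i=P_i/U_i$, the image $\bar x$ is a nontrivial unipotent element of order $r$, and it lies in $R(L_i)$ because $U_i\leq R(P_i)$ and the image of a solvable radical lies in the solvable radical of the quotient. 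But the semisimple part $[L_i,L_i]$ is a commuting product of quasisimple groups of Lie type defined over fields of order at least $q\geq 5$, hence nonsolvable, while the central torus of $L_i$ is an $r'$-group; therefore $R(L_i)$ is an $r'$-group and contains no element of order $r$, contradicting $\bar x\in R(L_i)$.

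It remains to treat the rank-one group $\PSU_3(q)$, where the unique class of maximal parabolic subgroup is solvable and the argument above breaks down; this is the main obstacle. Here I would instead embed $x$ into a nonsolvable proper subgroup on which its radical can be read off: the transvections (long-root elements) of $\PSU_3(q)$ lie in a subgroup $\SU_2(q)\cong \SL_2(q)$, while the regular unipotent elements lie in a subgroup $\mathrm{SO}_3(q)\cong \PGL_2(q)$, both proper and nonsolvable for $q\geq 5$ (recall $q$ is odd, as $r\geq 5$). In either case the solvable radical of the chosen subgroup $H$ is a central $r'$-group, so the nontrivial unipotent $x$ of order $r$ cannot lie in $R(H)$; Proposition \ref{prop:conj1_reduction}(ii), which forces $x\in R(H)$, then yields the contradiction. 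I expect the delicate points to be verifying that every unipotent class of $\PSU_3(q)$ is met by one of these two subgroups, using the classification of maximal subgroups of $\PSU_3(q)$ and, where convenient, Lemma \ref{lem:conjugates}, and checking the handful of smallest fields $q$ directly.
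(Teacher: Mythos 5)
Your argument is correct and ends at the same contradiction as the paper (a proper overgroup $H$ of $x$ whose solvable radical is an $r'$-group, against Proposition \ref{prop:conj1_reduction}(ii)), but it is organized differently in both branches. In the generic branch the paper likewise starts from Lemma \ref{lem:2 parabolic}(a), but then appeals to Table 3 of \cite{Guest10} to exhibit a parabolic on whose almost simple Levi complement $x$ acts nontrivially; you instead read the contradiction straight off $x\in R(P_i)$ by passing to $L_i=P_i/U_i$ and noting that $R(L_i)$ is an $r'$-group, which is a slightly more self-contained use of the same input (the point being that $q=r^f\ge 5$ rules out the solvable Levi components $\SL_2(2)$, $\SL_2(3)$, $\SU_3(2)$, and the like). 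You also absorb ${}^3{\rm D}_4(q)$ into this branch, which is harmless since its Levi components $\SL_2(q)$ and $\SL_2(q^3)$ are quasisimple; the paper treats it separately but identically in spirit. The genuine divergence is the regular unipotent class of $\PSU_3(q)$: the paper shows $x$ then lies in a unique maximal parabolic of type $q^{1+2}{:}(q^2-1)$ and plays two $G$-conjugate involutions against each other using the hypothesis of Theorem \ref{th:radical} together with the single involution class of $\PSU_3(q)$, whereas you embed $x$ in $\mathrm{SO}_3(q)\cong\PGL_2(q)$ and quote Proposition \ref{prop:conj1_reduction}(ii) once more. Your route works, but the containment you flag does need a sentence of justification: a regular unipotent of $\mathrm{SO}_3(q)$ is regular in $\SU_3(q)$ because the symmetric square of the natural $\SL_2(q)$-module is a single Jordan block when $r\ge 3$, and the remaining $\mathrm{GU}_3(q)$-classes of regular unipotent elements are caught by conjugates of $\mathrm{SO}_3(q)$ under diagonal automorphisms, which does not change the isomorphism type of the overgroup. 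For transvections your $\SU_2(q)$ and the paper's $\mathrm{GU}_2(q)$ are the same reduction.
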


\begin{proof}
Since $x\in G$ is unipotent and $p\ge 5$, we can assume that $G=G_0$ is not isomorphic to ${}^2{\rm B}_2(q)$, ${}^2{\rm F}_4(q)$, ${}^2{\rm G}_2(q)$ and $\PSL_2(q)$. So the untwisted Lie rank of $G$ is at least $2$.

Assume first that $G\not\cong \PSU_3(q)$ nor ${}^3{\rm D}_4(q)$.
As in the proof of \cite[Lemma 11]{Guest10}, by using Lemma \ref{lem:2 parabolic} and Table 3 in \cite{Guest10}, there exists a parabolic subgroup $P$ with unipotent radical $U$ of $G$ such that $x$ is contained in $P-U$ and acts nontrivially on a Levi complement  of $P$ and since $p\ge 5$, this Levi complement is almost simple. But then $(G,x)$ is not a minimal counterexample.

Now assume that $G=\PSU_3(q)$. Since $5\leq p\mid q$, we have $q\ge 5.$ If $x$ is a transvection, then $x$ lies in a subgroup of type $\textrm{GU}_2(q)$. Thus we assume that $x$ is not a transvection and thus $x$ is a regular unipotent element. Since $(G,x)$ is a minimal counterexample, the only maximal subgroups of $G$ could contain $x$ are the maximal parabolic subgroups. It follows that $x$ lies in a unique maximal parabolic subgroup of type $q^{1+2}:(q^2-1)$ (see \cite[Tables 8.5 and 8.6]{BHR}). So we can find an involution $i$ such that $\la x,i\ra$ is solvable. Now choose another involution $j$ that does not lie in this parabolic  subgroup, then $\la x,j\ra$ is not solvable. Finally, the result follows sine $G$ has only one class of involutions.

Now assume that $G={}^3{\rm D}_4(q)$. Since $5\leq p\mid q$, we have $q\ge 5.$ By Lemma \ref{lem:2 parabolic}, we may assume that $x$ acts nontrivially on a Levi component of type $\SL_2(q)$ or $\SL_2(q^3)$ of a parabolic subgroup. Hence $(G,x)$ is not a minimal counterexample.
\end{proof}

\begin{lem}\label{lem:Ree}
The almost simple groups $G$ with socle $G_0={}^2{\rm G}_2(q)$, where $q=3^f$ and $f\ge 3$ is odd, is not a minimal counterexample to Theorem \ref{th:radical}.
\end{lem}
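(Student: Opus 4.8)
The plan is to mirror the treatment of the Suzuki groups in Lemma~\ref{lem:Sz}, exploiting the classical fact that $G_0={}^2{\rm G}_2(q)$ has a single conjugacy class of involutions. First I would invoke Proposition~\ref{prop:conj1_reduction}: a minimal counterexample $(G,x)$ is almost simple with socle $G_0$, satisfies $G=G_0\la x\ra$ and $R(G)=1$, and enjoys property $(**)$. Since $G_2$-type groups have trivial center, there are no diagonal automorphisms, so $\Out(G_0)$ is cyclic of order $f$ and consists of field automorphisms; thus any $x\in G-G_0$ is a field automorphism, and Lemma~\ref{lem:Outer} (which excludes only ${}^2{\rm B}_2$ and $\PSL_2$, not ${}^2{\rm G}_2$) rules this out. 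As $p\ge 5$ and $q=3^f$, the $p$-element $x$ cannot be unipotent. Hence I may assume $x\in G_0$, so $G=G_0$ and $x$ is a semisimple element of order $p$.

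Next I would produce an involution $i$ with $\la x,i\ra$ solvable. Being semisimple, $x$ lies in a maximal torus $T$ of $G_0$; every maximal torus of ${}^2{\rm G}_2(q)$ is cyclic, of order $q-1$, $q+1$, $q-\sqrt{3q}+1$ or $q+\sqrt{3q}+1$, and in each case $N_{G_0}(T)$ is a solvable group of even order (a subgroup of the parabolic, the four-group normalizer, or a Frobenius group $C_{q\pm\sqrt{3q}+1}{:}C_6$). For the even-order tori $q\pm 1$ one may take $i$ to be the unique involution of $T$, so that $\la x,i\ra\le T$ is cyclic; for the two odd-order tori one takes the Weyl involution inverting $T$, so that $\la x,i\ra$ is dihedral. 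In all cases $\la x,i\ra\le N_{G_0}(T)$ is solvable and $i$ is an involution.

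The remaining point is to exhibit an involution $j$ with $\la x,j\ra$ nonsolvable; this is exactly what I would extract from Lemma~3.12 in \cite{Guest12}, just as for ${}^2{\rm B}_2(q)$ in Lemma~\ref{lem:Sz}. Because $G_0$ has a single class of involutions, $i$ and $j$ are conjugate, say $j=i^g$ with $g\in G_0$. Setting $y=i$, a $2$-element with $2\neq p$, we then have $\la x,y\ra$ solvable while $\la x,y^g\ra=\la x,j\ra$ is nonsolvable, contradicting property $(**)$. Hence $(G,x)$ is not a minimal counterexample, which is the assertion of the lemma.

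I expect the main obstacle to be the construction of the nonsolvable pair $\la x,j\ra$. The solvable pair $\la x,i\ra$ comes cheaply from torus-normalizer structure, but guaranteeing an involution $j$ with $\la x,j\ra$ nonsolvable amounts to showing that $x$ together with a suitably chosen involution lies in no proper solvable overgroup. This is precisely the input of Lemma~3.12 of \cite{Guest12}, whose underlying argument rests on the involution centralizer $\la t\ra\times\PSL_2(q)$ together with the list of maximal subgroups of ${}^2{\rm G}_2(q)$. The step I would scrutinize most carefully is verifying that this cited lemma applies verbatim to the Ree groups over every field with $q=3^f$ and $f\ge 3$ odd, and in particular that it does not tacitly require excluding the smallest cases.
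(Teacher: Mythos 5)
Your proof is correct and follows essentially the same route as the paper: reduce to $G=G_0$ with $x$ semisimple via Lemma \ref{lem:Outer}, produce an involution $i$ with $\la x,i\ra$ solvable inside a solvable even-order overgroup, invoke Guest's work for an involution $j$ with $\la x,j\ra$ nonsolvable, and conclude from the single class of involutions. The only differences are cosmetic: the paper gets the solvable pair from the maximal subgroup list in \cite[Table 8.43]{BHR} together with minimality rather than from torus normalizers (and note the torus of order $q+1$ is not cyclic, though this is immaterial), and the nonsolvable pair is cited from Theorem 1.3 of \cite{Guest12} rather than Lemma 3.12, which answers the applicability concern you raised.
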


\begin{proof}
Since $|x|=p\ge 5$, by Lemma \ref{lem:Outer}, $x$ is semisimple and $G=G_0$. Table 8.43 in \cite{BHR} lists all maximal subgroups of $G$. Since $(G,x)$ is minimal counterexample, $x$ cannot lie in the maximal subgroups of type $2\times \PSL_2(q)$ or the subfield subgroups. The remaining maximal subgroups are solvable and have even order, so we can find an involution $i$ such that $\la x,i\ra$ is solvable and by Theorem 1.3 \cite{Guest12}, there exists an involution $j$ such that $\la x,j\ra$ is nonsolvable. Now the result follows sine $G$ has only one class of involutions.
\end{proof}

\begin{lem}\label{lem:G2}
The almost simple groups $G$ with socle $G_0\cong{\rm G}_2(q)', {}^3{\rm D}_4(q)$, or ${}^2{\rm F}_4(q)$ are not minimal counterexamples to Theorem \ref{th:radical}.
\end{lem}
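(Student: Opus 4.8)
The plan is to run the same kind of argument used in Lemmas \ref{lem:PSL2}, \ref{lem:Sz} and \ref{lem:Ree}: starting from a minimal counterexample $(G,x)$ with socle $G_0$ one of these three families, I would first strip away the easy cases and reduce to $G=G_0$ with $x$ semisimple of order $p\ge 5$, and then produce a prime $r\neq p$, an $r$-element $y$ with $\la x,y\ra$ solvable, and a conjugate $y^g$ with $\la x,y^g\ra$ nonsolvable, contradicting the hypothesis (property $(**)$ in the proof of Proposition \ref{prop:conj1_reduction}). For the reduction, note that each of ${\rm G}_2(q)'$, ${}^3{\rm D}_4(q)$ and ${}^2{\rm F}_4(q)$ has untwisted Lie rank at least $2$ and is neither ${}^2{\rm B}_2(q)$ nor $\PSL_2(q)$; hence Lemma \ref{lem:Outer} excludes the possibility that $x$ is an outer field automorphism. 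Since all three families are of adjoint type, so that $G_0^*=G_0$, and since none admits a graph automorphism of order $p\ge 5$, we get $x\in G_0$ and $G=G_0$. Lemma \ref{lem:unipotent} then removes the unipotent case (which cannot occur at all for ${}^2{\rm F}_4(q)$, where $p$ is coprime to the defining characteristic $2$), so $x$ may be taken semisimple of order $p$.

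The main body of the proof I would organize as a dichotomy on $C_G(x)$. When $C_G(x)$ has even order and $q$ is odd, I would exploit the fact that ${\rm G}_2(q)$ and ${}^3{\rm D}_4(q)$ each have a single conjugacy class of involutions. Choosing an involution $i\in C_G(x)$ makes $\la x,i\ra$ abelian, hence solvable; on the other hand, Theorem 1.3 of \cite{Guest12} furnishes an involution $j\in G$ with $\la x,j\ra$ nonsolvable. As $i$ and $j$ are conjugate, $j=i^g$ for some $g$, so $\la x,i\ra$ is solvable while $\la x,i^g\ra$ is not, contradicting $(**)$ taken with $r=2$. The complementary case is when $x$ is of primitive-prime-divisor type, so that $C_G(x)$ is an odd-order maximal torus $T$ and $x$ lies in a \emph{unique} maximal subgroup $M=N_G(\la x\ra)$ (a solvable Frobenius-type group); this I would handle exactly as in part (b) of Lemma \ref{lem:PSL2}, using Lemma \ref{lem:conjugates}: pick a prime $r\neq p$ dividing $|M/\la x\ra|$ and an $r$-element $y\in M$ so that $\la x,y\ra\leq M$ is solvable, then choose $g$ with $y^g\notin M$, forcing $\la x,y^g\ra=G$ to be nonsolvable.

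To make this uniform I would read off, from the maximal-subgroup tables for ${\rm G}_2$, ${}^3{\rm D}_4$ and ${}^2{\rm F}_4$ in \cite{BHR}, all maximal overgroups of $x$; by reduction (ii) of Proposition \ref{prop:conj1_reduction} every such $M$ must satisfy $x\in R(M)$, which kills all almost simple maximal subgroups and all subfield subgroups and leaves only parabolic subgroups and torus normalizers. Sorting $x$ by the cyclotomic factor $\Phi_d(q)$ of $|G|$ that its order divides then pins down the possible $M$ and lets me run one of the two arguments above in each case. The main obstacle, and the reason these three families deserve a dedicated lemma, is the even-characteristic groups — especially ${}^2{\rm F}_4(q)$, which has several classes of (unipotent) involutions and the exotic torus orders $q^2\pm\sqrt{2q^3}+q\pm\sqrt{2q}+1$. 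There the single-class involution shortcut is unavailable and $C_G(x)$ can be odd, so one is thrown back on the torus-normalizer analysis for \emph{every} semisimple type; the delicate point is to verify, case by case over the tori, that the cyclic complement of $\la x\ra$ in its normalizer contains an element of some prime order $r\neq p$ and that no unexpected maximal subgroup also contains $x$, so that a conjugate $y^g$ genuinely escapes all overgroups of $x$ and generates $G$.
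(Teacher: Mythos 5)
Your reduction (Lemmas \ref{lem:Outer} and \ref{lem:unipotent} give $G=G_0$ with $x$ semisimple of order $p\ge 5$) matches the paper, and your two finishing moves are exactly the ones the paper uses elsewhere: the single-involution-class argument via \cite[Theorem 1.3]{Guest12} (as in Lemma \ref{lem:Ree}) and the unique-maximal-overgroup argument via Lemma \ref{lem:conjugates} (as in Lemma \ref{lem:PSL2}(b)). The paper does use the latter for the $\Phi_{12}$-torus of ${}^3{\rm D}_4(q)$ and for the torus normalizers of ${}^2{\rm F}_4(q)$, and an involution-based variant for the first of these. So the skeleton is right.

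The gap is that your dichotomy on $C_G(x)$ is not exhaustive, and the case it misses is the one that carries most of the weight. A semisimple $x$ of order $p\ge 5$ can have a centralizer that is neither an odd-order torus nor covered by the involution trick: in even characteristic ${\rm G}_2(q)$, ${}^3{\rm D}_4(q)$ and ${}^2{\rm F}_4(q)$ all have more than one class of involutions, so argument (A) is unavailable, while $x$ may have a large non-toral centralizer (e.g.\ of type $\GL_2^\epsilon(q)$ in ${\rm G}_2$, or containing $\SU_3(q)$ or $\SL_2(q^3)$ in ${}^3{\rm D}_4$) and hence lie in many maximal subgroups, so argument (B) is unavailable too; "falling back on the torus-normalizer analysis for every semisimple type" does not make sense for such $x$. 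The mechanism the paper actually uses there is the third one you do not supply: exhibit a quasisimple subgroup $L$ (of type $\SL_3^\epsilon(q)$ in ${\rm G}_2$, of type $\SU_3^{\pm}(q)$ inside $(q^2\mp q+1)\circ \SU_3^{\pm}(q)$ in ${}^3{\rm D}_4$, the subgroups of \cite[Table 7]{Guest12} and \cite{Malle} in ${}^2{\rm F}_4$) that $x$ \emph{normalizes but does not centralize}; then $H=L\la x\ra$ is a proper overgroup with $x\notin R(H)$, contradicting part (ii) of Proposition \ref{prop:conj1_reduction}. Your remark that reduction (ii) "kills all almost simple maximal subgroups" elides the real difficulty: if $x$ \emph{does} centralize the quasisimple part of such an $M$, then $x\in R(M)$ and no contradiction arises from $M$; the paper resolves this by observing that $x$ then centralizes unipotent elements, hence lies in a parabolic subgroup, where Lemma \ref{lem:2 parabolic}(b) applies. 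Without this third mechanism and the parabolic case of Lemma \ref{lem:2 parabolic}, your plan cannot dispose of the generic semisimple classes, and the small cases ${}^3{\rm D}_4(2)$, ${}^3{\rm D}_4(3)$ (handled by MAGMA in the paper) also need separate treatment.
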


\begin{proof}
By Lemmas \ref{lem:Outer} and \ref{lem:unipotent}, $x$ is semisimple and $G=G_0.$

(1) Assume $G_0={\rm G}_2(q)'$. Since ${\rm G}_2(2)'\cong\PSU_3(3)$, we can assume that $q\ge 3.$ By \cite[p. 546]{GS} or \cite[Lemma 3.9]{Guest12}, $x$ normalizes but does not centralizes a subgroup of type $\SL^\epsilon_3(q)$. So $(G,x)$ is not a minimal counterexample.

(2) Assume $G_0\cong  {}^3{\rm D}_4(q)$. The cases when $q=2,3$ can be handled by MAGMA \cite{magma}, we can assume that $q\ge 5.$
If $x$ lies in a maximal parabolic subgroup, then we get a contradiction by using Lemma \ref{lem:2 parabolic}. So we assume that $x$ does not lie in any maximal parabolic subgroups. It follows that $C_G(x)$ is a maximal torus.  As in the proof of Lemma 3.9 in \cite{Guest12},  if $p\mid q^2-q+1$, then $x$ is contained in a subgroup of type $(q^2-q+1)\circ \SU_3(q).$ If $x$ does not centralize $\SU_3(q)$, then $(G,x)$ is not a minimal counterexample. If $x$ centralizes $\SU_3(q)$, then $x$ centralizes some unipotent elements and hence is contained in a maximal parabolic subgroup. The case when $p\mid q^2+q+1$ can be handled similarly. Now assume $p\mid q^4-q^2+1$. By \cite[Table 8.51]{BHR}, $x$ lies in a unique maximal subgroup $H$ of the form $(q^4-q^2+1):4$. So we can find an involution $i$ such that $\la x,i\ra$ is solvable and by the uniqueness of $H$, we can find $g\in G$ such that $i^g\not\in H$ and hence $\la x,i^g\ra$ is not solvable.

(3) Assume $G_0\cong  {}^2{\rm F}_4(q)$ with $q=2^{2f+1}$ and $f\ge 1$. If $x$ lies in a maximal parabolic subgroup, then we get a contradiction by using Lemma \ref{lem:2 parabolic}. So we assume that $x$ does not lie in any maximal parabolic subgroups. As in the previous case, by using Table 7 in \cite{Guest12} and \cite{Malle}, we deduce that either $(G,x)$ is not a minimal counterexample or $x$ lies in a unique maximal subgroup which is solvable and has even order (it is the normalizer of a Sylow $p$-subgroup of $G$). We can obtain a contradiction as before.
\end{proof}

\begin{lem}\label{lem:F4}
The almost simple groups $G$ with socle $G_0\cong {\rm F}_4(q), {\rm E}_6(q), {\rm E}_7(q), {\rm E}_8(q)$, or ${}^2{\rm E}_6(q)$ are not minimal counterexamples to Theorem \ref{th:radical}.
\end{lem}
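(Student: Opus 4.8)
The plan is to follow the template of Lemma \ref{lem:G2}, reducing to a semisimple inner element and then splitting according to whether $x$ lies in a proper parabolic subgroup. Suppose $(G,x)$ is a minimal counterexample with socle $G_0$ one of the listed groups. Since the untwisted Lie rank is at least $4$ and $G_0$ is neither $\PSL_2(q)$ nor ${}^2{\rm B}_2(q)$, Lemma \ref{lem:Outer} shows $x$ is inner-diagonal; and as the diagonal automorphism group of each of these groups has order dividing $6$, which is coprime to $p\ge 5$, the element $x$ must in fact be inner, so $x\in G_0$ and $G=G_0$. Lemma \ref{lem:unipotent} then rules out the unipotent case, so I may assume $x\in G_0$ is semisimple of order $p\ge 5$.

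The first case is that $x$ lies in a proper parabolic subgroup of $G$. Since the rank is at least $2$, Lemma \ref{lem:2 parabolic}(b) lets me conjugate $x$ into a Levi complement $J$ of a maximal parabolic subgroup so that no Levi component of $J$ centralizes $x$. For $p\ge 5$ the relevant Levi component $L$ is a nonsolvable group of Lie type of smaller rank (this is exactly the information recorded in Table 3 of \cite{Guest10}), and $x$ normalizes $L$ while acting nontrivially on it. Hence $R(L\la x\ra)\le C_{L\la x\ra}(L)$ does not contain $x$, so $x\notin R(L\la x\ra)$ with $L\la x\ra$ a proper subgroup of $G$; this contradicts Proposition \ref{prop:conj1_reduction}(ii).

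The second case is that $x$ lies in no proper parabolic subgroup. Then $C_G(x)$ is a maximal torus $T$ contained in no parabolic, so $p$ divides $\Phi_d(q)$ for one of the finitely many degrees $d$ affording such a torus. Appealing to the torus and maximal-subgroup analysis of \cite{Guest12} (supplemented by \cite{Malle} for the maximal tori) together with Lemma \ref{lem:conjugates} applied to the cyclic Sylow $p$-subgroup, I would show that either $x$ again normalizes without centralizing a Lie-type subgroup --- whence $(G,x)$ is not minimal exactly as above --- or else $x$ lies in a unique maximal subgroup $H=N_G(T)$, which is solvable of even order. In the latter case choose an involution $i\in H$, so that $\la x,i\ra\le H$ is solvable; by the uniqueness of $H$ there is $g\in G$ with $i^g\notin H$, and then $\la x,i^g\ra=G$ is nonsolvable. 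Taking $r=2$ and $y=i$ in property $(**)$ yields the desired contradiction, so $(G,x)$ is not a minimal counterexample.

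The principal obstacle is the non-parabolic torus case for ${\rm E}_7(q)$ and ${\rm E}_8(q)$: one must establish that the regular semisimple $x$ has a unique maximal overgroup and control the several classes of involutions inside $N_G(T)$, which rests on the detailed tables of maximal subgroups and maximal tori in \cite{Guest10, Guest12, Malle}. By contrast, verifying nonsolvability of the Levi components in the parabolic case and exhibiting the two required involutions in the torus case is routine bookkeeping.
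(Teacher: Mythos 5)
Your overall strategy (reduce to semisimple inner $x$ with $G=G_0$ via Lemmas \ref{lem:Outer} and \ref{lem:unipotent}, dispose of the parabolic case via Lemma \ref{lem:2 parabolic} and minimality, then treat the case where $C_G(x)$ is a maximal torus and $x$ has a unique maximal overgroup $H$) is exactly the paper's. The parabolic half and the appeal to Proposition \ref{prop:conj1_reduction}(ii) are fine.

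The gap is in your final step. You assert that the unique maximal overgroup $H=N_G(T)$ is ``solvable of even order'' and then choose an involution $i\in H$ to witness property $(**)$ with $r=2$. This fails: for $G_0={\rm E}_6(q)$ with $p$ a primitive prime divisor of $\Phi_9(q)=q^6+q^3+1$, the relevant maximal subgroup is the normalizer of a cyclic maximal torus of order $\Phi_9(q)/(3,q-1)$ with relative Weyl group cyclic of order $9$ (Springer's theory of regular elements, $9$ being a degree of $W({\rm E}_6)$). Both $\Phi_9(q)$ and $9$ are odd for every $q$, so $|H|$ is odd and $H$ contains no involution at all; a similar issue arises for the $\Phi_{18}$-torus normalizer in ${}^2{\rm E}_6(q)$. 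The paper avoids this by not insisting on $r=2$: it takes $H=N_G(P)$ for $P$ the (cyclic) Sylow $p$-subgroup, invokes \cite[Theorem 1.1]{GMN} to conclude that $P\neq H$ since $p\ge 5$ (so $|H|$ has \emph{some} prime divisor $r\neq p$), picks a nontrivial $r$-element $y\in H$ so that $\la x,y\ra\le H$ is solvable, and then uses the uniqueness of $H$ to find $g$ with $y^g\notin H$ and hence $\la x,y^g\ra=G$. Replacing your involution by such an $r$-element (and citing the self-normalizing Sylow result to guarantee it exists) repairs the argument; everything else you wrote matches the paper's route.
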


\begin{proof}
By Lemmas \ref{lem:Outer} and \ref{lem:unipotent}, $x$ is semisimple and $G=G_0.$ As in the proof of the previous lemma or \cite[Lemma 3.8]{Guest12} and \cite[\S 12--\S14]{Guest10}, $x$ does not lie in any maximal parabolic subgroups of $G$ and thus $C_G(x)$ is a maximal torus. Using the information from Tables 2-5 in \cite{Guest12} and also Tables 2--3 in \cite{BBGH}, where some maximal subgroups of $G$ that contain a Sylow $p$-subgroup are listed. In these cases, either $(G,x)$ is not a minimal counterexample or $x$ lies in a unqiue maximal subgroup $H=N_G(P)$ which is also a normalizer of a maximal torus containing $P$ with $x\in P.$ We know that $P\neq H$ by \cite[Theorem 1.1]{GMN} since $p\ge 5.$ Thus we can find an $r$-element $y\in H$, where  $r\neq p$ is a prime such that $\la x,y\ra$ is solvable. By choosing $g\in G$ with $y^g\not\in H$, we see that $\la x,y^g\ra$ is not solvable.
\end{proof}

\begin{proof}[\textbf{Proof of Theorem \ref{th:radical}}]
Let the pair $(G,x)$ be a minimal counterexample to Theorem \ref{th:radical} with $|G|$ minimal. Then $x\in G$ is an element of prime order $p$ and $\la x,y^g\ra$ is solvable for all $g\in G$ whenever $y$ is an $r$-element for some prime $r\neq p$ and $\la x,y\ra$ is solvable.
By Proposition \ref{prop:conj1_reduction}, $G=S\la x\ra$ is an almost simple group with simple socle $S$, where $S$ is a finite simple group of Lie type defined over a finite field of size $q$, where $q$ is a prime power. Moreover, if $H$ is any proper overgroup of $x$, then $x\in R(H)$. 

By Lemmas \ref{lem:PSL2}, \ref{lem:Sz}, \ref{lem:Ree}, \ref{lem:G2} and \ref{lem:F4}, we can assume that $G_0$ is a classical groups and $G_0\not\cong \PSL_2(q)$. 
By Lemmas \ref{lem:Outer} and \ref{lem:unipotent}, we have $G_0\unlhd G\leq G_0^*$ and $x$ is semisimple.  
Since $p\ge 5,$ either $G_0\cong\PSL_d^\epsilon(q)$ with $n\ge 3$ or $G=G_0.$

Inspecting the proof of Lemma 3.6 in \cite{Guest12} (see also Sections $7-11$ in \cite{Guest10}), where we use Lemma \ref{lem:lift} rather than \cite[Lemm 3.2]{Guest12}, the only cases when $(G,x)$ could be a minimal counterexample  are when $G_0\cong\PSL_d^\epsilon(q)$,  $p$ is a ppd of $q^d-1$ or $q^{2d}-1$ if $\epsilon=-$, $d$ is an odd prime and $x$ acts irreducibly on the natural module for $G_0$. Moreover, since $p>d$, $x$ is contained in a unique maximal subgroup $H$ of type $\GL_1^\epsilon(q^d)\cdot d$. Using the same argument as in the proof of the previous lemma, we obtain a contradiction. This completes the proof of the theorem.
\end{proof}

\begin{rem} We record here some remarks on Theorem \ref{th:radical}.
\begin{enumerate}
\item Theorem \ref{th:radical} does not hold if $x$ is an involution. For example, if $G=\mathrm{S}_5$ and $x$ is any transposition, then $\la x,y\ra$ is solvable for all $3$-elements $y\in G$ while $\la x,y\ra$ is non-solvable for all $5$-elements $y\in G.$

\item If we insist that $r\neq p$ is an odd prime, the conclusion does not hold either. We can take $G=\PSL_3(3)$ and $x$ a transvection of order $3$. Then  for any odd prime $r\neq 3$ (so $r=13$), we can check that $\la x,y\ra$ is not solvable for any $r$-elements $y\in G$. However, there is an element $z\in G$ of order $4$ or $8$ such that $\la x,z\ra$ is solvable but $\la x,z^g\ra$ is not solvable for some $g\in G.$

\item Theorem \ref{th:radical} might hold for elements of order 3 as well but the proof is more involved. Note that the reduction to almost simple groups works for the case $|x|=3$.
\item We believe that this theorem holds for $p$-elements for any odd primes $p$. However, we are not able to reduce this to almost simple groups yet.
\item The following problem is parallel to Theorem 1 in \cite{GL14}: ``Let $G$ be a finite group and let $x\in G$. Then $x\in R(G)$ if and only if for all odd primes  $p$, and all $p$-elements $y\in G$, if $\la x,y\ra$ is solvable, then $\la x,y^g\ra$ is solvable for all $g\in G$.''  If $x$ is an element of prime order $p\ge 5$, then  $\la x,x^g\ra$ is solvable for all $g\in G$ since $\la x,x\ra$ is solvable. In this case, $x\in R(G)$ by the main results in \cite{GGKP09,Guest10}. 
\item In view of Proposition \ref{prop:conj1_reduction}, it is an interesting problem to determine all finite nonsolvable groups $G$ which contains an element $x$ such that $x$ lies in the solvable radical of every proper overgroup of $x$ in $G$. This would be an extension of the classification of minimal simple groups due to Thompson.
\end{enumerate}
\end{rem}

Problem 2.3 in \cite{BCNS} asks for a description of the set of vertices of the expanded $\SCC$-graph of a finite group $G$ which are joined to all other vertices. We restate this problem as follows.

\begin{problem}\label{prob1}
Let $G$ be a finite group. Classify all elements $x\in G$ such that for each conjugacy class $C$ of $G$, there exists $y\in C$ such that $\la x,y\ra$ is solvable.
\end{problem}

As already noted in \cite{BCNS}, all the involutions in $\LL_2(2^f)$ and $\textrm{J}_1$ satisfy the assumption of Problem \ref{prob1}. For other examples, one can take $G=\LL_2(8)\cdot 3$ and $x$ an outer automorphism of order $3$ or $G=\LL_2(2^5)\cdot 5$ and $x$ is an outer automorphism of order $5.$ Thus the elements that satisfy the hypothesis of this problem may not be in the solvable radical of the group.
For dominant vertices in the expanded $\NCC$-graph, we obtain the following result that is an easy consequence of the main theorem in \cite{FM} due to Fumagalli and Malle  generalizing  Wielandt's theorem on subnormal subgroups. Recall that if $p$ is a prime, then $O_p(G)$ is the largest normal $p$-subgroup of $G$.

\begin{cor}
Let $G$ be a finite group, let $p$ be an odd prime and let $x$ be a $p$-element of $G$. Assume that for every conjugacy class $C$ of $G$, there exists $g\in C$ such that $\la x,g\ra$ is nilpotent. Then $x\in O_p(G)$.
\end{cor}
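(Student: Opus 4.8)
The plan is to derive the statement from the main theorem of \cite{FM}. That theorem, which extends Wielandt's subnormality criterion, characterises membership in the Fitting subgroup by a one-representative-per-class nilpotency condition: an element $x$ of a finite group $G$ has $\la x\ra$ subnormal in $G$ (equivalently $x\in F(G)$) exactly when for every conjugacy class $C$ of $G$ there is some $g\in C$ with $\la x,g\ra$ nilpotent. The hypothesis of the corollary is precisely this condition, and (presumably) the oddness of $p$ is what is needed to meet the hypotheses of \cite{FM}, the prime $2$ being exceptional in results of this Baer--Suzuki/Wielandt type. So the first step is simply to quote \cite{FM} and conclude $x\in F(G)$.

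The remaining task is to pass from $F(G)$ to its $p$-part. Since $F(G)$ is a nilpotent normal subgroup of $G$, it is the internal direct product of its Sylow subgroups, $F(G)=\prod_r O_r(G)$ over the primes $r$ dividing $|F(G)|$. Its Sylow $p$-subgroup $P$ is characteristic in $F(G)$ and $F(G)$ is characteristic in $G$, so $P\unlhd G$; being a normal $p$-subgroup, $P\leq O_p(G)$, while conversely $O_p(G)\leq F(G)$ is a $p$-group and hence $O_p(G)\leq P$, giving $P=O_p(G)$. Now $x\in F(G)$ is a $p$-element, so in the direct decomposition all of its components lying in the factors $O_r(G)$ with $r\neq p$ are trivial; thus $x\in P=O_p(G)$, as wanted.

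The only real content is the correct invocation of \cite{FM}: one has to match the assumption ``for every conjugacy class $C$ there exists $g\in C$ with $\la x,g\ra$ nilpotent'' to the input of their result and read off that $\la x\ra$ is subnormal, whence $x\in F(G)$ because a nilpotent subnormal subgroup is always contained in the Fitting subgroup. Everything afterwards is the routine extraction of the $p$-part, using nothing beyond the nilpotency of $F(G)$ and that $x$ has $p$-power order; so I expect no genuine obstacle once the theorem of \cite{FM} is cited in the appropriate form.
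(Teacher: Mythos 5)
Your proposal is correct and follows essentially the same route as the paper: both reduce to \cite[Theorem A]{FM} by observing that nilpotency of $\la x,g\ra$ forces $\la x\ra$ to be subnormal in $\la x,g\ra$, with the oddness of $p$ needed to invoke that theorem. The only cosmetic difference is that the paper applies the Fumagalli--Malle result in its form for $p$-subgroups, which yields $\la x\ra\leq O_p(G)$ directly, so your extra step of passing through $F(G)$ and extracting the $p$-part is not needed.
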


\begin{proof}
Let $A=\la x\ra$. Then $A$ is a $p$-group and  for every conjugacy class $C$ of $G$, $\la A,g\ra$ is nilpotent for some $g\in C$ by the hypothesis, hence  $A$ is subnormal in $\la A,g\ra$ and so by \cite[Theorem A]{FM}, $A\leq O_p(G)$.
\end{proof}

Example 3.2 in \cite{FM} also shows that the corollary does not hold for $2$-elements.  It is possible that any $2$-elements that satisfy the assumption of the corollary will be in the solvable radical.



\end{document}